\newtheorem{theorem}{Theorem}[section]
\newaliascnt{lemma}{theorem}
\newtheorem{lemma}[lemma]{Lemma}
\newaliascnt{proposition}{theorem}
\newtheorem{proposition}[proposition]{Proposition}
\newaliascnt{corollary}{theorem}
\newaliascnt{claim}{theorem}
\newtheorem{claim}[claim]{Claim}
\theoremstyle{definition}
\newaliascnt{definition}{theorem}
\newtheorem{definition}[definition]{Definition}
\newtheorem*{acknowledgements}{Acknowledgements}
\newaliascnt{example}{theorem}
\newaliascnt{exercise}{theorem}
\newaliascnt{question}{theorem}
\newaliascnt{problem}{theorem}
\theoremstyle{remark}
\newaliascnt{remark}{theorem}
\newtheorem{remark}[remark]{Remark}
\newaliascnt{notation}{theorem}
\newaliascnt{fact}{theorem}
\numberwithin{equation}{theorem}%
\numberwithin{figure}{theorem}
\renewcommand{\Bbb}[1]{\mathbb{#1}}
\newcommand{\abs}[1]{\left|#1\right|}
\newcommand{\diam}{\operatorname{diam}}
\newcommand{\dist}{\operatorname{dist}}
\begin{document}
\title[almost nilpotency and Hurewicz fibration]{Margulis lemma and Hurewicz fibration Theorem on Alexandrov spaces}
\author{Shicheng Xu}
\email{shichengxu@gmail.com}
\address{School of Mathematical Sciences, Capital Normal University, Beijing, China}

\thanks{\it 2000 Mathematics Subject Classification.\rm\ 53C21. 53C23}
\author{Xuchao Yao}
\email{qiujuli@vip.sina.com}
\address{School of Mathematical Sciences, Capital Normal University, Beijing, China}

\thanks{Keywords: Alexandrov spaces, Lipschitz and co-Lipschitz, 
fibration, nilpotent, fundamental group, Gromov-Hausdorff convergence}

\date{\today}

\begin{abstract}
We prove the generalized Margulis lemma with a uniform index bound on an Alexandrov $n$-space $X$ with curvature bounded below, i.e., small loops at $p\in X$ generate a subgroup of the fundamental group of unit ball $B_1(p)$ that contains a nilpotent subgroup of index $\le w(n)$, where $w(n)$ is a constant depending only on the dimension $n$. The proof is based on the main ideas of V.~Kapovitch, A.~Petrunin, and W.~Tuschmann, and the following results:

(1) We prove that any regular almost Lipschitz submersion constructed by Yamaguchi on a collapsed Alexandrov space with curvature bounded below is a Hurewicz fibration. We also prove that such fibration is uniquely determined up to a homotopy equivalence.

(2) We give a detailed proof on the gradient push, improving the universal pushing time bound given by V.~Kapovitch, A.~Petrunin, and W.~Tuschmann, and justifying in a specific way that the gradient push between regular points can always keep away from extremal subsets.
\end{abstract}
\maketitle

\section{Introduction}
In this paper we prove the Margulis lemma on Alexandrov spaces with curvature bounded below. A group $\Gamma$ is called {\it $w$-nilpotent} if there is a nilpotent subgroup $N<\Gamma$ whose index $[\Gamma:N]\le w$.
Let $B_r(p)$ denote a metric ball centered at $p$ of radius $r$.
\begin{theorem}[Generalized Margulis Lemma]\label{thmb-margulis}
	There are $\epsilon(n),w(n)>0$ such that for any Alexandrov space $X$ with curvature $\ge -1$ and any point $p\in X$, the subgroup $\Gamma_p(p;\epsilon)$ of fundamental group $\pi_1(B_1(p),p)$ generated by loops at $p$ lying in $B_{\epsilon}(p)$ with $0<\epsilon\le \epsilon(n)$ is $w(n)$-nilpotent.
\end{theorem}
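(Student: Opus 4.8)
The plan is to argue by contradiction together with an induction on the dimension $n$. If the theorem fails, then after a diagonal argument there is a sequence of Alexandrov $n$-spaces $X_i$ with $\curv\ge -1$, points $p_i\in X_i$, and radii $\epsilon_i\to 0$ such that the short loop subgroup $\Gamma_i:=\Gamma_{p_i}(p_i;\epsilon_i)<\pi_1(B_1(p_i))$ contains no nilpotent subgroup of index $\le i$. The cases $n\le 1$ are elementary (small balls are arcs, so the short loop group is trivial), so one may assume the statement in every dimension $<n$. The goal is to extract a limiting picture in which the short loops organize into a connected nilpotent Lie group, contradicting the non-almost-nilpotency of the $\Gamma_i$.

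To build this picture I would first pass to covers: let $\widehat B_i\to B_1(p_i)$ be the cover on which $\Gamma_i$ acts as deck transformations (the pullback of the universal cover of a slightly larger ball), with $\widehat p_i$ a lift of $p_i$, so that $\Gamma_i$ is generated by the deck transformations moving $\widehat p_i$ a distance $\le 2\epsilon_i$. Choosing the correct ``critical'' rescaling factor $\lambda_i$ — morally, the smallest scale at which $\Gamma_i$ (as it sits over balls $B_r(p_i)$) still fails to be almost nilpotent — and rescaling the metrics by $1/\lambda_i$, one has $\curv\ge -\lambda_i^2$ and may pass to a pointed equivariant Gromov--Hausdorff limit $(\widehat B_i/\lambda_i,\widehat p_i,\Gamma_i)\to(\widehat Y,\widehat y,G)$ with $\widehat Y$ an Alexandrov space and $G<\mathrm{Isom}(\widehat Y)$ a closed, hence Lie, subgroup. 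The choice of $\lambda_i$ is what forces $G$ to be nontrivial and, more importantly, makes $\widehat Y$ and the quotient $\widehat Y/G_0$ into genuinely simpler objects to which lower-dimensional information applies.

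The heart of the argument is to show that the identity component $G_0$ is nilpotent and that $G/G_0$ contains a nilpotent subgroup of index bounded by a dimensional constant; this is where results (1) and (2) enter. Since $X_i$ (near $p_i$) collapses to a lower-dimensional Alexandrov space $Y$, Yamaguchi's regular almost Lipschitz submersion $f_i$ is, by (1), a Hurewicz fibration, so it yields a long exact homotopy sequence relating $\pi_1(B_1(p_i))$ to $\pi_1$ of a ball in the strictly lower-dimensional base (controlled by the inductive hypothesis) and to $\pi_1$ of the fiber $F_i$; the uniqueness of the fibration up to homotopy equivalence makes these identifications compatible along the sequence, so $\pi_1(F_i)$ eventually stabilizes. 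The fiber $F_i$, rescaled to unit size, is itself an almost nonnegatively curved collapsing space of dimension $<n$, so a further appeal to the induction (or to an auxiliary induction on the number of collapsing directions) shows $\pi_1(F_i)$ is almost nilpotent with uniformly bounded index; translating back gives $G_0$ nilpotent and $\pi_0(G)$ bounded. The gradient push of (2) is exactly the tool that makes this legitimate: it slides the basepoint $p_i$ and the short loops to \emph{regular} points and moves between scales while keeping away from the extremal subsets on which Yamaguchi's construction and the fibration theorem are unavailable, and the universal pushing-time bound is what makes these deformations uniform in $i$, so the resulting constants depend only on $n$.

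A final discreteness step closes the argument: since $\Gamma_i\to G$ equivariantly with $G_0$ nilpotent and $\#(G/G_0)$ bounded, a Zassenhaus-type argument for discrete isometry groups converging to a Lie group shows that for all large $i$ the group $\Gamma_i$ itself contains a nilpotent subgroup of index $\le w(n)$; this contradicts the choice of the sequence once $i>w(n)$, and tracking the critical radii $\lambda_i$ yields the threshold $\epsilon(n)$. I expect the main obstacle to be the middle step — making the induction on dimension actually close up, i.e. guaranteeing that the rescaled limit $\widehat Y$, the quotient $\widehat Y/G_0$, and the rescaled fibers are all strictly simpler objects to which the inductive hypothesis genuinely applies, while the gradient push keeps every deformation away from extremal subsets with time bounds uniform in $i$. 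It is precisely this requirement that forces one to establish (1) and (2) in the sharp forms stated in the introduction.
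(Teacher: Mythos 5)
Your overall strategy --- argue by contradiction, rescale and take equivariant Gromov--Hausdorff limits, use the Hurewicz fibration theorem (1) and the gradient push (2) to control the fundamental group --- is indeed the skeleton of the paper's proof. However, two steps of your proposal contain genuine gaps, and the way you propose to close the argument differs from, and is weaker than, what the paper actually does.

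The first and most serious gap is the final ``Zassenhaus-type discreteness step''. Knowing that $\Gamma_i\to G$ equivariantly with $G_0$ nilpotent and $\#(G/G_0)\le C(n)$ does not, by itself, imply that $\Gamma_i$ contains a nilpotent subgroup of uniformly bounded index for large $i$. The equivariant limit only captures the group structure at one scale; elements of $\Gamma_i$ whose displacement shrinks faster than the rescaling factor all converge to the identity of $G$, and there is a priori no control on how complicated the resulting ``invisible'' subgroup is. This is precisely the difference between Fukaya--Yamaguchi's almost-nilpotency (no index bound) and the uniform version proved here. The paper instead follows Kapovitch--Petrunin--Tuschmann: it constructs, via an \emph{iterated} multi-scale blow-up, a chain of local fundamental groups $G_1\vartriangleright G_2\vartriangleright\cdots\vartriangleright G_l=\{e\}$ with $l\le n$ satisfying (A) each $G_i/G_{i+1}$ is $C$-abelian, (B) each $G_i\vartriangleleft G_1$, and (C) the conjugation action $\rho_i:G_1\to\operatorname{Out}(G_i/G_{i+1})$ has image of size $<N_0$; then it invokes the purely algebraic \cite[Lemma 4.2.1]{KPT10} to conclude $w$-nilpotency. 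The uniform index bound emerges specifically from (C), whose proof uses the uniform pushing-time bound of Theorem \ref{thm-gradientpush} to show that conjugation by any short loop is, up to an element of $G_i$, realized by a bounded-time gradient push and hence acts on the orbit $G_i/G_{i+1}$ with only a packing-bounded number of possibilities. Your proposal rightly intuits that the time bound is where the uniformity comes from, but the mechanism you suggest (one Zassenhaus step at one scale) does not transmit it.

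The second gap is the appeal to an induction on dimension for the fiber $F_i$. The homotopy fiber of the collapsing fibration is in general not an Alexandrov space with a lower curvature bound, so the inductive hypothesis cannot be applied to it directly. The paper avoids this by never inducting on the fiber: instead it rescales the whole space by the diameter of the Perelman/Yamaguchi fiber, passes to a new equivariant limit $\theta_{i+1,\alpha}^{-1}(X_\alpha,p_{i,\alpha})\to(A_{i+1}\times\mathbb{R}^{k_1+\cdots+k_i},p'_{i+1})$, and extends the $\delta^2$-maximal frame into the collapsed directions. The fibration theorem enters only to establish the normality $\imath(\pi_1^L(p;R_{i+1}))\vartriangleleft\pi_1^L(p;r_i)$ between consecutive scales (condition (1.1.3) of the leveled gap), not to reduce to a lower-dimensional base to which the full theorem is applied. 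Finally, note that the paper's contradiction argument produces a ``good'' basepoint $q_\alpha$ near $p_\alpha$, not necessarily $p_\alpha$ itself, and then uses the Kapovitch--Wilking packing argument (Lemma \ref{lem-packing-index}) to transfer the conclusion to arbitrary $p$; your proposal works directly at $p_i$ and does not account for this reduction step.
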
 

The original Margulis lemma is also called Margulis-Heintze's theorem, which was proved by Margulis (cf. \cite{Gr78a}), and also independently discovered by Heintze \cite{He76} on manifolds of $-1\le K\le 0$. Since then, it has been one of the fundamental facts in Riemannian geometry which has many applications, e.g., Gromov's almost flat theorem \cite{Gr78a}, finiteness of closed negatively pinched manifolds \cite{Gr78b} of bounded volume, and more recently the almost rigidity of maximal volume entropy \cite{CRX16} for manifolds of lower bounded Ricci curvature to be hyperbolic. 

For manifolds with sectional curvature $K\ge -1$, it was proved by Fukaya-Yamaguchi \cite{FY92} that $\Gamma(\epsilon)$ is almost nilpotent without a uniform bound on the index. For Alexandrov spaces, the earlier version of Theorem \ref{thmb-margulis} was proved by Yamaguchi in \cite{Ya96}, also without a uniform bound on the index of the nilpotent subgroup, where the proof was based on the
Lipschitz submersion Theorem \ref{thm:Lipschitz-submersion}
and arguments in \cite{FY92}. Later a global version of Theorem \ref{thmb-margulis} for manifolds of almost nonnegative curvature was proved by Kapovitch-Petrunin-Tuschmann \cite{KPT10}, where $\Gamma(\epsilon)$ admits a nilpotent subgroup with uniformly bounded index. Theorem \ref{thmb-margulis} also follows from the main ideas of Kapovitch-Petrunin-Tuschmann \cite{KPT10}. 

Gromov conjectured that the Margulis lemma with a universal bounded index holds for manifolds of lower bounded Ricci curvature. A breakthrough on this conjecture was made by Cheeger-Colding \cite{CC96}, and it has been finally confirmed recently by
Kapovitch-Wilking \cite{KW11}. 

We point it out that the uniform index bound is very important to some geometric applications, for example, in Gromov's almost flat theorem \cite{Gr78a}, the uniform index bound corresponds to the holonomy gap which is crucial in Gromov's and Ruh's proof (see \cite{Gr78a}, \cite{Ruh82}, \cite{BK81}). The uniformly index bound is also crucial for the almost rigidity of maximal volume entropy \cite{CRX16} in deriving that the connectedness component of a Gromov-Hausdorff limit group of deck-transformations is a nilpotent Lie group.

\begin{remark}
	More generally, one may further consider a metric space $X$ of $K$-bounded packing, i.e., there is $K>0$ such that every ball of radius $4$ in $X$ can be covered by at most $K$ balls of radius $1$. In \cite[\S5.F]{Gro2007} Gromov proposed a question whether a discrete isometric subgroup $\Gamma$ acting on a metric space with $K$-bounded packing is virtually nilpotent, if $\Gamma$ is generated by finite elements whose displacement at one point $< \epsilon(K)$?  It has been answered affirmatively by \cite{BGT2012} recently. However, the uniform index bound as in Theorem \ref{thmb-margulis} is beyond their approach (see \cite[Section 11]{BGT2012}). 
\end{remark}

Our proof relies on Theorem \ref{thm:Lipschitz-submersion} and Theorem \ref{thm-gradientpush-1} below.

For small $0<\delta<\delta(n,\kappa)$, the {\it $\delta$-strained radius} \cite{Ya96} at a
point $p$ in an $n$-dimensional Alexandrov space $Y$ of curv $\ge \kappa$ is
defined to be $$r_{\text{$\delta$-str}}(p)=\sup \{r\; | \text{ there exists an
	$(n,\delta)$-strainer
	at $p$ of length $r$} \}.$$ 
Let $r_{\text{$\delta$-str}}(Y)=\inf\{  	r_{\text{$\delta$-str}}(p):p\in Y\}$.
Let $\varkappa(\delta,\epsilon|n)$ denote a positive
function depending on $n$, $\delta$ and $\epsilon$ satisfying
$\varkappa(\delta,\epsilon)\to 0$ as $\delta,\epsilon\to 0$.
A map $f:X\to Y$ between Alexandrov spaces is an
{\it $\epsilon$-almost Lipschitz submersion} \cite{Ya96} if
\begin{enumerate}
	\item[(i)] $f$ is an $\epsilon$-Gromov-Hausdorff approximation (GHA for simplicity), i.e., for any $p,q\in X$, $||f(p)f(q)|-|pq||\le \epsilon$ and $f(X)$ is $\epsilon$-dense in $Y$, where $|pq|=d(p,q)$ denote the distance between two points $p,q$; and
	\item[(ii)] for any $p, q \in X$, 
	$$\left|\frac{|f(p)f(q)|}{|pq|}-\sin\theta\right|<\epsilon,$$ where $\theta(p,q)$ is the infimum of
	$\measuredangle qpx$ when $x$ runs over $f^{-1}(f(p))$.
\end{enumerate}
We call an $\epsilon$-almost Lipschitz submersion is {\it regular}, if in addition, 
\begin{enumerate}
	\item[(iii)] for any $y,z\in Y$, there are points $p\in f^{-1}(y),q\in f^{-1}(z)$ such that $|\theta(p,q)-\frac{\pi}{2}|\le \epsilon.$
\end{enumerate}

\begin{theorem}[Lipschitz submersion \& fibration] \label{thm:Lipschitz-submersion}
	For any dimension $n$ and positive number $\mu_0$, there exist positive numbers $\delta(n)$ and $\epsilon(n,\mu_0)$ such that for any $m$-dimensional Alexandrov space $X$ with curv $\ge -1$ and any $n$-dimensional Alexandrov space $Y$ with curv $\ge -1$, if
	\begin{enumerate}\numberwithin{enumi}{theorem}
		\item the $\delta$-strained radius of $Y$, $r_{\text{$\delta$-str}}(Y)\ge \mu_0$ with $0<\delta<\delta(n)$, and
		\item the Gromov-Hausdorff distance $d_{GH}(X,Y)\le
		\epsilon<\epsilon(n,\mu_0)$, 
	\end{enumerate}
	then there exists a regular $\varkappa(\delta,\epsilon|n)$-almost Lipschitz submersion $f : X \to Y$ that is a Hurewicz fibration.
\end{theorem}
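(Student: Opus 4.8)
Let me think about how to prove this theorem. The statement has two parts: (1) there exists a regular $\varkappa$-almost Lipschitz submersion $f: X \to Y$, and (2) this map is a Hurewicz fibration.

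Part (1) is essentially Yamaguchi's construction. Yamaguchi constructed almost Lipschitz submersions using averaged distance functions to strainers — given the $(n,\delta)$-strainer structure on $Y$ with strained radius $\ge \mu_0$, one pulls back coordinate functions via the GHA to get approximate strainers on $X$, then uses these to define $f$. The regularity conditions (i)-(iii) follow from the strainer estimates. This is quite technical but is "known" — I'd cite Yamaguchi's paper and perhaps sketch the key averaging.

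Part (2) — the Hurewicz fibration property — is the real content here, and I think this is the main obstacle. The standard approach: show $f$ has the homotopy lifting property with respect to all spaces, equivalently (since $Y$ is reasonably nice — it has a strained radius bound so it's locally bi-Lipschitz to $\mathbb{R}^n$) it suffices to check local triviality up to homotopy, or use a theorem of Dold: a map that is "locally a Hurewicz fibration" (i.e., $Y$ has an open cover over each element of which $f$ restricts to a Hurewicz fibration) is itself a Hurewicz fibration. So I'd reduce to: over small balls in $Y$, $f$ is a Hurewicz fibration.

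So let me think harder. Over a small ball $B \subset Y$, the strainer gives bi-Lipschitz coordinates. The key geometric input must be the gradient push (Theorem \ref{thm-gradientpush-1}): using gradient flows of the strainer-type functions on $X$, one can flow fibers $f^{-1}(y)$ to fibers $f^{-1}(y')$ for nearby $y, y'$, giving something like a continuous "parallel transport" of fibers. Concretely, to lift a homotopy $H: Z \times [0,1] \to B$ starting from a lift $\tilde{h}_0: Z \to f^{-1}(H(\cdot,0))$, I'd want to compose gradient pushes along the tracks $t \mapsto H(z,t)$. The issue is continuity and the fact that gradient push is only defined for a bounded time and must avoid extremal subsets — this is exactly what point (2) of the abstract (improved pushing time bound, avoiding extremal sets at regular points) is designed to handle. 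So the plan is: (a) subdivide $[0,1]$ and $Z$ finely enough that each piece of the homotopy moves $Y$-coordinates by a small amount; (b) on each piece, use gradient push along strainer functions to move points in $X$ covering the prescribed $Y$-motion, staying within the pushing-time bound and away from extremal sets; (c) patch these together continuously — here one needs the gradient push to depend continuously on the starting point and on the target, and to be "reversible enough" to glue. Then Dold's theorem upgrades local fibration to global. The hardest step, I expect, is establishing the continuity and compatibility of the gradient-push lifting on overlaps — i.e., genuinely verifying the homotopy lifting property rather than a weaker "fibration up to homotopy" — and controlling that the pushed points realize the correct $f$-image up to the $\varkappa$-error without the errors accumulating over the subdivision.

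Let me also reconsider: there is an alternative, softer route. Since $f$ is a $\varkappa$-GHA that is also an almost submersion with $Y$ a space of bounded strained radius (hence an Alexandrov space that is locally bi-Lipschitz to Euclidean space, in particular an ANR, locally contractible, finite-dimensional), and the fibers $f^{-1}(y)$ are compact — one might try to invoke a general criterion: a proper map between nice ANRs that is "approximately a fibration" in a suitable metric sense, with fibers of controlled homotopy type, is a Hurewicz fibration. But I don't think such a black box is clean enough to cite; the geometry of the gradient push is really needed to produce the lifts. So I'll commit to the gradient-push approach above. For the uniqueness-up-to-homotopy-equivalence claim (mentioned in the abstract but perhaps stated separately), the plan is: two such fibrations over the same $Y$ have the same base and, via the GHAs, fiber-homotopy-equivalent fibers; then a standard fiberwise obstruction/Dold-type argument (two Hurewicz fibrations over a nice base that are fiber-homotopy-equivalent fiberwise over a cover are fiber-homotopy-equivalent) finishes it — again reducing to the local model where both are bi-Lipschitz-trivialized by strainers.

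(One technical remark I'd include: to apply Dold's local-to-global theorem one needs $Y$ paracompact, which holds as $Y$ is a compact — or locally compact, separable — metric space; and one needs the local fibrations to agree on overlaps up to fiber homotopy, which is automatic here since they are all restrictions of the single candidate map $f$, so in fact Dold's theorem applies in its simplest form: $f$ restricted to each set of an open cover of $Y$ is a Hurewicz fibration $\implies$ $f$ is a Hurewicz fibration.)
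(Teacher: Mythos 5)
Your proposal correctly splits the theorem into (1) existence of a regular almost Lipschitz submersion (Yamaguchi) and (2) the Hurewicz property, and part (1) matches the paper. But for part (2) you go a fundamentally different route, and there are two serious problems with it.

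First, you explicitly consider and then reject the ``soft'' black-box approach: ``a proper map between nice ANRs that is approximately a fibration \dots with fibers of controlled homotopy type, is a Hurewicz fibration. But I don't think such a black box is clean enough to cite.'' That black box exists and is clean: it is Ferry's theorem (\cite{Fe78}, stated in the paper as Theorem~\ref{thm:strong-regular}) that a proper, \emph{strongly regular} map with ANR fibers onto a complete finite-dimensional base is a Hurewicz fibration. The paper's proof consists exactly of verifying strong regularity and local contractibility of fibers for a regular almost Lipschitz submersion (reformulated as an $e^\epsilon$-LcL, Theorem~\ref{thm-Hurewicz}), then applying Ferry's theorem. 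This is much weaker to verify than the homotopy lifting property directly: you only need controlled homotopy equivalences between nearby fibers, not actual lifts of arbitrary homotopies, and all the patching/accumulation-of-errors/continuity-of-lifts difficulties you correctly worry about simply do not arise.

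Second, you identify ``the gradient push (Theorem~\ref{thm-gradientpush-1})'' as the key geometric input, but that is the wrong gradient flow. Theorem~\ref{thm-gradientpush-1} flows along $\tfrac12\dist_{q_0}^2$, $\tfrac12\dist_{a_j}^2$, $\tfrac12\dist_{b_j}^2$ for a strainer $\{a_j,b_j\}$ in $X$; this pushes points radially from the strainer endpoints and has nothing to do with the fiber structure of $f$ --- it is the tool for the Margulis lemma, not for the fibration. The flow the paper actually uses here is the gradient of $\dist_{f^{-1}(S_r(p))}$, the distance in $X$ to the $f$-preimage of a metric sphere in $Y$. Its gradient points toward $f^{-1}(p)$ (Lemma~\ref{lem-gradient-estimate}), and iterating over shrinking radii $r_i = r/2^i$ yields a neighborhood retraction $\varphi_p \colon f^{-1}(B_{2r/3}(p)) \to f^{-1}(p)$, continuous in $p$ (Proposition~\ref{prop:gradient-retract}). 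Restricting $\varphi_p$ to $f^{-1}(q)$ for nearby $q$ gives the controlled homotopy equivalences needed for strong regularity, and composing $\varphi_p$ with a contraction inside $X$ shows the fibers are locally contractible, hence ANR. If you attempted your direct homotopy-lifting scheme with the Theorem~\ref{thm-gradientpush-1} flow, the flow would not track the fibers of $f$, so there is no obvious way to make the lifts land in (approximately) the right fibers or to control the composite; and even with the right flow, the Dold-plus-direct-lifting route requires you to actually produce compatible lifts, which is strictly harder than verifying strong regularity and is not how the paper proceeds.
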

\begin{remark}
	If in addition, every $f$-fiber is a topological manifold
	without boundary of co-dimension $n$, then $f$ is a locally trivial fibration; see \cite{RX12}.
	
	We also prove that the fibration in Theorem \ref{thm:Lipschitz-submersion} is uniquely determined in the homotopic sense; see Theorem \ref{thm:uniqueness}.
\end{remark}

Theorem \ref{thm:Lipschitz-submersion} can be traced back to the fibration theorem \cite{Fu87}, \cite{Ya91}, \cite{Pr93} for manifolds, which has played a fundamental role in the study of collapsed manifolds. The existence of regular almost Lipschitz in Theorem \ref{thm:Lipschitz-submersion} is due to Yamaguchi \cite{Ya96}, where he conjectured that it should be a locally trivial fibration. Here we partly verify his conjecture.

\begin{remark}
	A direct corollary of Theorem \ref{thm:Lipschitz-submersion} is a long exact sequence
	arising from the fibration:
	\begin{equation}\label{long-exact-seq}
	\begin{aligned}
	&\cdots\to\pi_l(F,x)\to
	\pi_l(X,x)\overset{f_*}{\to}\pi_l(Y,f(x))\to\pi_{l-1}(F,
	x)\to\cdots\to \\
	&\cdots\to\pi_1(Y,f(x))\to 0.
	\end{aligned}
	\end{equation}
	In \cite{Pr97} Perelman concluded the same long exact sequence 
	under a much weaker situation, that is, when a sequence of Alexandrov spaces $X_i$ with curv $\ge\kappa$ collapses to a limit space $Y$, if $Y$ contains no proper extremal subsets, then (\ref{long-exact-seq}) holds for $i$ large and a regular fiber $F$ (i.e., the fiber of a lifting map to $X_i$ of regular admissible maps locally defined on $Y$ to $\Bbb R^n$, see 	\cite{Pr97}). 
	
	By the proof of Theorem \ref{thmb-margulis}, both the homotopy fiber in Theorem \ref{thm:Lipschitz-submersion} and Perelman's regular fiber admit a $w(m-n)$-nilpotent fundamental group, where $w$ depends on the codimension.
\end{remark}

The gradient push developed by Kapovitch-Petrunin-Tuschmann \cite{KPT10} is important for us to deduce the uniform index bound in Theorem \ref{thmb-margulis}, as what happened for almost nonnegatively curved manifolds in \cite{KPT10}.
\begin{theorem}[Gradient push, {\cite[Lemma 2.5.1]{KPT10}}]\label{thm-gradientpush-1}
	There are $\delta(n), T(n)>0$ such that if the metric ball $B_1(p_0)$ centered at $p_0$ of radius $1$ is relative compact in an Alexandrov $n$-space $X$ with curvature $\ge -1$, then there are regular points $\{a_j,b_j\}_{j=1}^n$ and $q_0$ in $B_{\frac{1}{100}}(p_0)$ such that $\{a_j,b_j\}_{j=1}^n$ is a $(n,\delta)$-strainer at $q_0$ and any point $q$ in $B_{\delta |a_nb_n|}(q_0)$ can be pushed successively by the gradient flows of $\frac{1}{2}\dist_{q_0}^2$, $\frac{1}{2}\dist_{a_j}^2$, $\frac{1}{2}\dist_{b_j}^2$ $(j=1,\dots, n)$ to any point $p\in B_{\frac{1}{2}}(q)$ in total time $\le T(n)$.
\end{theorem}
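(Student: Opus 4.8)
The plan is to follow the strategy of \cite[Lemma 2.5.1]{KPT10}, but with explicit bookkeeping of the pushing times so as to extract a bound depending only on $n$. First I would set up the infinitesimal picture: at a point $q_0$ whose $\delta$-strained radius is bounded below (which exists in $B_{1/100}(p_0)$ by the standard strainer-existence argument, since $B_1(p_0)$ is relatively compact and has curvature $\ge -1$, and one may perturb to regular points using that regular points are dense), the functions $\dist_{q_0}$ and $\dist_{a_j},\dist_{b_j}$ behave like coordinate functions on $\mathbb R^n\times\mathbb R_{\ge 0}$ up to an error controlled by $\delta$. The key local estimate is that the gradient of $\tfrac12\dist_{a_j}^2$ at a point $x$ points almost exactly in the "$a_j$-direction", with length $\approx |xa_j|$, and is almost orthogonal to the other strainer directions; this is where one invokes the first variation formula and the $(n,\delta)$-strainer inequalities. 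Consequently, pushing by the gradient flow of $\tfrac12\dist_{a_j}^2$ for a controlled time moves a point in a prescribed almost-Euclidean direction by a prescribed amount, while disturbing the other coordinates by at most $\varkappa(\delta)$ times the displacement.

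Next I would assemble these moves into a single push. Given $q\in B_{\delta|a_nb_n|}(q_0)$ and a target $p\in B_{1/2}(q)$, I want to choose a sequence of times $t_0$ (for $\tfrac12\dist_{q_0}^2$), then $s_j^+,s_j^-$ (for $\tfrac12\dist_{a_j}^2,\tfrac12\dist_{b_j}^2$), so that after applying the corresponding gradient flows in the prescribed order, the image of $q$ lands at $p$. This is an inverse-function-type argument: the composition of the flows, as a function of the time-vector $(t_0,s_1^+,s_1^-,\dots,s_n^+,s_n^-)\in\mathbb R^{2n+1}_{\ge 0}$, is an almost-isometric almost-submersion onto a neighborhood of $q$ in $X$ (the extra parameter $t_0$ handles the radial/"$q_0$-distance" coordinate, the pairs $s_j^\pm$ handle the $n$ Euclidean coordinates in each of the two signs), so surjectivity onto $B_{1/2}(q)$ follows once $\delta$ is small and the allotted total time $T(n)$ is large enough to reach distance $1/2$ in each coordinate. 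Here the gradient flow's $1$-Lipschitz-type / semiconcavity properties of $\dist^2$ under curvature $\ge -1$ (and the fact that gradient curves of $\tfrac12\dist_a^2$ approach $a$ at a definite rate) give the quantitative control; summing the per-coordinate times gives an explicit $T(n)$, improving the bound of \cite{KPT10}.

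For the "keep away from extremal subsets" clause I would use that the gradient flow of a semiconcave function preserves the complement of any extremal subset in the forward direction only in a weak sense, so instead I argue that since $a_j,b_j,q_0,q$ are chosen to be regular points and $p$ ranges over regular points, one can perturb each intermediate target slightly to stay in the (open, dense) set of regular points, and the gradient curves between regular points can be taken transversal to every extremal subset; this uses Perelman--Petrunin's structure theory of extremal subsets (an extremal subset has empty interior and is itself an Alexandrov space of smaller dimension, and gradient curves emanating from its complement cannot enter it). The main obstacle I expect is precisely this last point together with the uniformity of $T(n)$: controlling the accumulated error of $2n+1$ successive gradient pushes so that the "almost-submersion onto $B_{1/2}(q)$" conclusion is genuinely uniform in $X$ and in the base point, and simultaneously ensuring the whole concatenated curve avoids extremal subsets — the naive union bound on errors degrades with $n$, so one must exploit the almost-orthogonality of the strainer directions to keep the cross-terms at size $\varkappa(\delta|n)$ rather than letting them compound. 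Everything else is a careful but routine application of the first variation formula and the contraction estimates for gradient flows on Alexandrov spaces.
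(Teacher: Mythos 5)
Your proposal captures the local "almost-Euclidean coordinate" picture near $q_0$, but it misses the central difficulty that the paper's proof is actually built around: the \emph{multi-scale} nature of the push, which is where the $(n,\delta)$-strainer alone is insufficient and the \emph{maximality} property of the frame becomes indispensable.

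Concretely, the edges $|a_jb_j|$ of the strainer can decay arbitrarily fast: $|a_nb_n|$ may be many orders of magnitude smaller than $|a_1b_1| \approx 1$. Your "inverse-function-type" argument — that the composition of gradient flows, viewed as a function of the time-vector $(t_0, s_1^+, s_1^-, \dots)$, is an almost-isometric almost-submersion — only holds on a ball around $q_0$ of radius comparable to $\delta|a_nb_n|$, since outside that radius the smallest strainer pair $(a_n,b_n)$ no longer resolves a coordinate direction. But the target $p$ lies at distance up to $\tfrac12$ from $q$, which is vastly beyond that scale. Thus surjectivity onto $B_{1/2}(q)$ does not follow from your local almost-submersion claim, and the time bound $T(n)$ would in the naive accounting depend on the ratio $|a_1b_1|/|a_nb_n|$, which is not controlled by $n$ alone. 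The "naive union bound on errors degrades with $n$" worry you raise is actually the easy part (the orthogonality handles it); the hard part you don't engage is how to push \emph{outward} from the tiny ball $B_{\delta|a_nb_n|}(q_0)$ to radius $\tfrac12$ in time independent of the collapsing scales.

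The paper resolves this with two ingredients you don't use. First, it works with a \emph{$\delta^2$-maximal frame}, not an arbitrary $(n,\delta)$-strainer: the numerical maximum property $(4.1)$ gives the Angle Estimate (Lemma \ref{lem-angle-est-0}), which guarantees that at any point outside a small fixed multiple of $d_{k+1}$ from $b_{k+1}$ but within scale $\delta d_k$, one of the \emph{long-edge} endpoints $a_j, b_j$ ($j\le k$) subtends a definitely acute angle, so its squared-distance gradient has a definite outward component. Second, it decomposes the frame into levels where consecutive $d_j$'s are comparable, pushes outward \emph{radially} by $\tfrac12\dist_{q_0}^2$ within each level (this takes only logarithmic time because the radial gradient speed is proportional to the current radius, and the ratio of scales within a level is bounded by $(\tfrac{100}{\delta^2})^n$), and crosses between levels with Lemma \ref{lem-difflevel-push-0} using the angle estimate. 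Without the maximality hypothesis or a replacement for it, there is no lower bound on the outward gradient component, and no uniform time bound. Your perturbation-based handling of extremal subsets is in the right spirit but also secondary; the real gap is the missing outward-pushing mechanism and its uniformity.
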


Compared to the case of manifolds, a crucial difference on an Alexandrov $n$-space $X$ is that, there may be proper extremal subsets and no gradient curves can get out of them. 
When pushing a loop at a regular point to another regular point, it is a subtle point whether the successive gradient curve at base point do not pass any proper extremal subset in $X$.

Since it is hard for us by following \cite{KPT10} to check this directly, in the appendix we give a detailed proof of Theorem \ref{thm-gradientpush-1}, by constructing a specific gradient pushing broken line, which consists of $k$-regular (i.e., the tangent cone $T_pX$ at least splits off $\mathbb R^k$) or $(n,\delta)$-strained points when $a_j,b_j$ and the ending point $p$ are $k$-regular. In particular, the gradient push between regular points can always keep away from extremal subsets. We also sharpen the universal time bound $T(n)$ to $n^2 \delta^{-1}$, improving the universal time bound $\delta^{-n^2}$ in \cite{KPT10}. This provides a detailed justification for the gradient push in proving the Margulis lemma on an Alexandrov space.

\begin{remark}
	Kapovitch-Wilking \cite{KW11} developed a replacement (see the zooming in property and rescaling theorem in \cite{KW11}) of Yamaguchi's fibration theorem \cite{Ya91} and gradient push \cite{KPT10} in proving the Margulis lemma for manifolds with lower bounded Ricci curvature.
	
	Note that it is necessary to change base points many times when the rescaling theorem is applied. Since a fixed base point is chosen to be valid for our case at every scale, the proof of Theorem \ref{thmb-margulis} is more direct than \cite{KW11}. 
\end{remark}

Now let us briefly explain ideas of the proofs. According to \cite{KPT10}, a finite generated group $G$ is $w$-nilpotent, if it admits a filtration $G_1=G\vartriangleright G_2 \vartriangleright \cdots \vartriangleright G_l=\{e\}$, where $l\le n$, each $G_i\vartriangleleft G_1$, $G_i/G_{i+1}$ is $c$-abelian, and the conjugate action of $G_1$ on $G_i/G_{i+1}$, namely $\rho_i:G_1\to \operatorname{Out}(G_i/G_{i+1})$, has a finite image, whose order is bounded by $C$. By a contradicting argument and an iterated blowing-up process, we will prove that around any $p\in X$, there is a nearby point $q$ at which the local fundamental group corresponding to different collapsing scales (see Definition \ref{def-local-group}) has a filtration as above. Then Theorem \ref{thmb-margulis} follows from a compact packing argument as in \cite{KW11}. Theorem \ref{thm:Lipschitz-submersion} is used in proving $G_{i+1}\vartriangleleft G_i$ (for an alternative proof, see \cite{FY92} or \cite{Ya96}). The normal property $G_{i}\vartriangleleft G_1$ and a uniform bound on $\# \rho_i(G_1)$ follow from the universal time bound in Theorem \ref{thm-gradientpush-1}.

According to Ferry's result (\cite{Fe78}, see also Theorem
\ref{thm:strong-regular}), the homotopy lifting property
holds for the map in Theorem \ref{thm:Lipschitz-submersion} if there are controlled homotopy
equivalences between nearby fibers (called {\it strong regular}, see
Section 2.2) and all fibers are abstract neighborhood retracts.
As a generalization of the tubular neighborhood of fibers and horizontal curves of an $\epsilon$-Riemannian submersion, a neighborhood retraction $\varphi_p$ to a fiber 
$f^{-1}(p)$ of a LcL was constructed in \cite{RX12} (see also Proposition
\ref{prop:gradient-retract}, Section 2.4), which is defined via iterated gradient deformations of distance functions.
By this neighborhood retraction associated to every fiber, we are able to define controlled
homotopy equivalences between nearby fibers
and prove the fiber is locally contractible.

The remaining of the paper is divided into three parts. In Section 2, we will review some topological results and prove Theorem \ref{thm:Lipschitz-submersion}. In Sections 3,4 and 5 we prove Theorem \ref{thmb-margulis}. In the Appendix we give an elementary construction of the gradient push in Theorem \ref{thm-gradientpush-1} with a sharpened time estimate improving that in \cite{KPT10}.

\begin{acknowledgements}
	The first author would like to thank Xiaochun Rong and Hao Fang for helpful discussions, and thank the University of Iowa for hospitality and support during a visit in which a part of the work was completed. The second author would like to thank Yin Jiang and Liman Chen for helpful suggestions. We are grateful to Fuquan Fang for pointing out the nilpotency result in \cite{BGT2012} to us. This work is supported partially by National Natural Science Foundation of China [11871349], [11821101], by research funds of Beijing Municipal Education Commission and Youth Innovative Research Team of Capital Normal University.
\end{acknowledgements}

\section{Homotopy lifting properties}

\subsection{Proof of Theorem \ref{thm:Lipschitz-submersion}}

A map $f:X\to Y$ between two metric spaces is called an {\it
	$e^\epsilon$-Lipschitz and co-Lipschitz} \cite{Ka07}, \cite{RX12} (briefly,
$e^\epsilon$-LcL), if for any $p\in X$, and any $r>0$, the
metric balls satisfy 
\begin{equation}
B_{e^{-\epsilon}r}(f(p))\subseteq
f(B_r(p))\subseteq B_{e^\epsilon r}(f(p)).\tag{1.7}
\end{equation}
A $1$-LcL preserves metric balls exactly and is called a {\it
	submetry} \cite{BG00}. Clearly, a regular $\epsilon$-almost Lipschitz submersion is an $e^{C\epsilon}$-LcL for some universal constant $C$.

Since by definition, a regular almost Lipschitz submersion satisfies the LcL property, it suffices to show Theorem \ref{thm-Hurewicz} below.

In order to simplify constant dependence, we introduce another terminology other than $\delta$-strained radius. 

An $n$-dimensional Alexandrov 
space $Y$ is called {\it $\epsilon$-almost Euclidean} if for any point $p\in Y$, there is a 
neighborhood $U$ containing $p$ and a bi-Lipschitz map $\varphi:U\to 
\varphi(U)\subset \Bbb R^n$ onto an open neighborhood in $\Bbb R^n$ 
such that for any $x,y\in U$,
\begin{equation}\label{eq:almost-Euclidean}
e^{-\epsilon}|xy|\le |\varphi(x)\varphi(y)|\le e^\epsilon 
|xy|.\tag{2.1}
\end{equation}
If (\ref{eq:almost-Euclidean}) holds on every $r$-ball in $Y$, then $Y$ is called {\it $(r,\epsilon)$-almost Euclidean}.
By \cite[Theorem 5.4]{BGP92}, an Alexandrov space with curv $\ge -1$ and $\delta$-strained radius $\ge \mu_0$ is $(\mu_0,\varkappa(\delta|n))$-almost Euclidean.
\begin{theorem}\label{thm-Hurewicz}
	Let $f:X\to Y$ is a $\sqrt{1.023}$-LcL between finite-dimensional 
	Alexandrov spaces with curv $\ge \kappa$. If $f$ is proper and  
	the base space $Y$ is  
	$\ln\sqrt{1.023}$-almost Euclidean, then $f$
	is a Hurewicz fibration, i.e., satisfying the
	homotopy lifting property with respect to any space.
\end{theorem}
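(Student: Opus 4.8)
The plan is to verify the Hurewicz homotopy lifting property directly by constructing, for any test space $Z$, a lift of a given homotopy $H\colon Z\times[0,1]\to Y$ starting from an initial lift $\tilde H_0\colon Z\to X$. The key geometric input is that a $\sqrt{1.023}$-LcL map over an almost Euclidean base admits \emph{controlled local path lifting}: given a point $x\in X$ and a short path $\gamma$ in $Y$ starting at $f(x)$, the co-Lipschitz condition \eqref{eq:almost-Euclidean} (combined with the LcL inequality (1.7)) lets one build a lift $\tilde\gamma$ starting at $x$ whose length is controlled by that of $\gamma$, by repeatedly choosing preimages in nested balls; the almost Euclidean structure on $Y$ guarantees that short paths stay inside bi-Lipschitz coordinate charts so the geometry is essentially that of $\mathbb R^n$. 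First I would make this precise as a lemma: there is a $\rho>0$ and a constant $L$ so that over any $\rho$-ball $B\subset Y$, the restriction $f\colon f^{-1}(B)\to B$ admits a lifting function for paths of length $<\rho$, i.e., a continuous assignment of lifted paths depending continuously on the endpoint data, with a Lipschitz bound $L$ on lengths.

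Second, I would upgrade local lifting to a genuine \emph{lifting function} in the sense of Hurewicz fibrations, using a partition/patching argument. Because $f$ is proper, $f^{-1}(B)$ is compact for bounded $B$, and one can run a gluing argument analogous to the classical proof that a map with the local homotopy lifting property over a paracompact base is a fibration (Hurewicz's uniformization theorem / Dold's theorem). Concretely: cover $Y$ by $\rho$-balls, pull this back to an open cover of $X$; the hypotheses that $Y$ is finite-dimensional Alexandrov and that all fibers are ANRs (the latter available from the structure of LcL maps, e.g.\ via the gradient neighborhood retraction $\varphi_p$ recalled in Proposition~\ref{prop:gradient-retract}) put us exactly in the setting where local fibrations assemble to a global one. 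Alternatively—and this is probably the cleaner route given the paper's framework—I would invoke Ferry's criterion (Theorem~\ref{thm:strong-regular}): it suffices to show $f$ is \emph{strong regular}, meaning nearby fibers are connected by $\epsilon$-controlled homotopy equivalences with $\epsilon\to0$, and that all fibers are ANRs. The LcL property gives, for $y$ close to $y'$, maps $f^{-1}(y)\to f^{-1}(y')$ and back (push points a small controlled distance using co-Lipschitz surjectivity onto small balls) whose compositions are controlled-close to the identity; combined with the ANR property this yields the controlled homotopy equivalences Ferry requires.

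Third, I would assemble the pieces: fiberwise ANR + strong regularity $\Rightarrow$ (by Ferry) the homotopy lifting property with respect to all spaces, which is precisely the assertion that $f$ is a Hurewicz fibration. The role of the specific constant $\sqrt{1.023}$ is to keep the product of the Lipschitz and co-Lipschitz distortions (including the bi-Lipschitz chart distortion $e^{\ln\sqrt{1.023}}$ on $Y$) below a threshold where the iterated ball-selection in the path-lifting lemma converges and where the back-and-forth fiber maps land in a neighborhood on which the retraction $\varphi_p$ is defined; I would track this threshold quantitatively through the geometric lemma, but not belabor the arithmetic.

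The main obstacle I expect is the \emph{continuity and coherence} of the lifted paths, i.e., producing an honest lifting function rather than just lifts of individual paths. The naive nested-ball construction makes choices, and one must either show these can be made continuously in the initial point and the path (using the almost Euclidean coordinates to linearize the choice, essentially lifting in $\mathbb R^n$ where one can take straight-line or barycentric selections and then transport back), or else avoid the issue entirely by routing through Ferry's theorem, which trades the analytic construction of a lifting function for the more checkable conditions of strong regularity and fiberwise ANR. I anticipate the cleanest write-up reduces Theorem~\ref{thm-Hurewicz} to verifying those two conditions, with the LcL inequality over an almost Euclidean base supplying the controlled fiber homotopy equivalences and Proposition~\ref{prop:gradient-retract} supplying the ANR (indeed local contractibility) of fibers.
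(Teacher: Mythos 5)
Your proposal lands on the paper's actual route: invoke Ferry's criterion (Theorem~\ref{thm:strong-regular}) and verify (i) strong regularity and (ii) that all fibers are ANRs, with the gradient neighborhood retraction of Proposition~\ref{prop:gradient-retract} doing the geometric work. That is exactly the paper's proof, so the overall architecture is right and the first (direct path-lifting) route you sketch can be dropped entirely.

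One point worth tightening: you describe the strong-regularity input as ``push points a small controlled distance using co-Lipschitz surjectivity onto small balls,'' and in your final summary you allocate ``the LcL inequality'' to the controlled fiber homotopy equivalences and reserve Proposition~\ref{prop:gradient-retract} for the ANR property. But co-Lipschitz surjectivity only gives pointwise existence of nearby preimages, not a \emph{continuous} assignment --- precisely the ``coherence of choices'' obstacle you yourself flagged in the path-lifting discussion. The retraction $\varphi_p$ is what resolves it, and it is used for \emph{both} halves of Ferry's criterion: the homotopy equivalences are the restrictions $\varphi_p|_{f^{-1}(q)}$ and $\varphi_q|_{f^{-1}(p)}$, the controlled homotopies are $H_t=\varphi_p\circ\varphi_{\gamma(t)}$ and $K_t=\varphi_q\circ\varphi_{\gamma(1-t)}$ along a minimal geodesic $\gamma$ from $p$ to $q$ (all of which rely on $\varphi$ being continuous in $p$ as well as $x$, i.e.\ Lemma~\ref{lem-continuity}), and $\varphi_p$ composed with a local contraction of $X$ gives the local contractibility (hence ANR) of $f^{-1}(p)$. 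So the gradient retraction is not an optional convenience for the ANR step but the mechanism that makes the strong-regularity maps exist continuously at all; the numerical threshold $\sqrt{1.023}$ enters exactly to keep the gradient estimate (Lemma~\ref{lem-gradient-estimate}) strong enough for the iterated gradient deformations to converge and stay inside the relevant ball.
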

Theorem \ref{thm-Hurewicz} has appeared in an earlier preprint \cite{Xu12}.

\vspace{2mm}
\begin{proof}[Proof of Theorem \ref{thm:Lipschitz-submersion}]
	~
	
	The existence of a regular almost Lipschitz submersion is proven by Yamaguchi \cite{Ya96}. By Theorem \ref{thm-Hurewicz} and the discussion above, any regular almost Lipschitz submersion $f:X\to Y$ is a Hurewicz fibration.
\end{proof}

The remaining of this section is devoted to prove Theorem \ref{thm-Hurewicz}.

\subsection{A sufficient condition for a fibration}
The following topological results are used in the proof of Theorem \ref{thm-Hurewicz}. 

For any Hurewicz fibration $f:X\to Y$, if
$Y$ is path-connected, then by definition the fibers are homotopy
equivalent to each other. In \cite{Fe78} Ferry proved that the
inverse is also true, if the homotopy equivalences between nearby
fibers and the homotopies are under control in the following sense.

A map $f:X\to Y$ between metric spaces is said to
be {\it strongly regular} \cite{Fe78} if $f$ is proper and if for each
$p\in Y$ and any $\epsilon>0$ there is a $\delta>0$ such that if
$|pp_1|<\delta$,
then there are homotopy equivalences between fibers $\varphi_{pp_1}:
f^{-1}(p)\to f^{-1}(p_1)$, $\varphi_{p_1p}: f^{-1}(p_1)\to f^{-1}(p)$
which togther with the homotopies move points in distance $<\epsilon$.

A topological space $X$ is an {\it absolute neighborhood retract}
(ANR) if there is an embedding of $X$ as a closed subspace of the
Hilbert cube $I^{\infty}$ such that some neighborhood
$N$ of $X$ retracts onto $X$. If $X$ is finite covering dimensional
and locally contractible, then $X$ is an ANR (\cite{Bo55}).

\begin{theorem}[\cite{Fe78}]\label{thm:strong-regular}
	If $f:E\to B$ is a strongly regular map onto a complete finite
	covering dimensional space $B$ and all fibers are ANRs, then $f$ is a
	Hurewicz fibration.
\end{theorem}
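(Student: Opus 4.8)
The plan, following Ferry \cite{Fe78}, is to localize the homotopy lifting problem and then to solve it over small balls by a controlled successive-approximation scheme, using the ANR hypothesis to convert the \emph{pointwise} control supplied by strong regularity into genuine continuity and to pass to a limit.

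\textbf{Step 1: localization.} Since $B$ is metric it is paracompact, so every open cover of $B$ has a numerable, locally finite refinement (of multiplicity $\le\dim B+1$, as $B$ is finite covering dimensional). By the Hurewicz uniformization theorem --- equivalently Dold's criterion --- a map that restricts to a Hurewicz fibration over each member of a numerable open cover of the base is itself a Hurewicz fibration. Hence it suffices to show that for each $p\in B$ there is a ball $U=B_\rho(p)$ over which $f\colon f^{-1}(U)\to U$ is a Hurewicz fibration; we are free to shrink $\rho$ at will.

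\textbf{Step 2: approximate lifts over $U$.} Fix $U$. To verify the homotopy lifting property it is enough to lift an arbitrary homotopy $H\colon Z\times I\to U$ starting from a given $g\colon Z\to f^{-1}(U)$ with $f\circ g=H_0$; I would build the lift $\tilde H$ as the uniform limit of maps $\tilde H_k\colon Z\times I\to f^{-1}(U)$ with $\tilde H_k|_{Z\times\{0\}}=g$ and
\[
\sup_{z,t}\,d\bigl(f(\tilde H_k(z,t)),\,H(z,t)\bigr)<\epsilon_k,\qquad \epsilon_k\downarrow 0 .
\]
To produce $\tilde H_{k+1}$ from $\tilde H_k$: using an adaptive (Lebesgue-number) subdivision of $I$ that varies continuously with $z$, cut $[0,1]$ into finitely many short subintervals on which both $H$ and the discrepancy $f\circ\tilde H_k$ move less than the $\delta$ attached by strong regularity to a prescribed small $\eta_{k+1}$; across each subinterval transport the current lift by the corresponding controlled fibre homotopy equivalences $\varphi_{pp_1},\varphi_{p_1p}$ and their homotopies, then concatenate. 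Since in the definition of strong regularity $\delta$ may be chosen so that the associated displacement tends to $0$, making the subdivisions fine enough at each stage forces $\epsilon_{k+1}$ as small as we wish while keeping $\tilde H_{k+1}=g$ on $Z\times\{0\}$.

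\textbf{Step 3: continuity, gluing, and the limit.} The equivalences $\varphi_{pp_1}$ are given only pointwise in the base parameter, are not a priori continuous in it, and need not satisfy $\varphi_{p_1p}\circ\varphi_{pp_1}=\mathrm{id}$ on the nose. To assemble them into honest continuous maps $\tilde H_k$ I would cover $U$ by small balls (of bounded multiplicity, as $\dim B<\infty$), form approximate fibrewise trivializations from the $\varphi$'s, patch them by a subordinate partition of unity, and control the patching error with the standard ANR lemma that maps into an ANR which are $\epsilon$-close are joined by a homotopy of size $c(\epsilon)$ with $c(\epsilon)\to 0$ --- applied to the fibres (ANR by hypothesis) and to $f^{-1}$ of small balls (finite-dimensional and locally contractible, hence ANR by the criterion recalled above; this holds in particular in our application, where $E$ is an Alexandrov space). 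The resulting sequence $(\tilde H_k)$ is uniformly Cauchy, and by properness of $f$ together with completeness of $B$ (and local compactness, automatic in the Alexandrov setting) its values stay in a complete subset of $E$, so $\tilde H:=\lim_k\tilde H_k$ exists, is continuous, and satisfies $\tilde H_0=g$, $f\circ\tilde H=H$. This establishes the homotopy lifting property over $U$, which by Step 1 finishes the proof.

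\textbf{Main obstacle.} The heart of the matter is Step 3: turning the pointwise controlled homotopy-equivalence data of strong regularity into \emph{continuous} approximate trivializations over $U$, with errors that are simultaneously summable and small enough that the limit still lands in $f^{-1}(U)$ and respects $f$. All three hypotheses are consumed here at once --- finite covering dimension (covers and partitions of unity of bounded complexity), the ANR property of the fibres (Ferry's small-homotopy lemma, converting uniform closeness into uniformly small homotopies), and completeness with properness (existence of the limit) --- and making the nested error estimates close up is the principal technical burden.
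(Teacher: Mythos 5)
The paper gives no proof of this statement: Theorem~\ref{thm:strong-regular} is quoted verbatim from Ferry \cite{Fe78} and used as a black box, so there is nothing in the paper for you to compare your argument against.

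On its own terms your proposal is a reasonable skeleton of Ferry's line of attack --- localize via the Hurewicz/Dold uniformization theorem over a numerable cover, chain the controlled fibre homotopy equivalences furnished by strong regularity into approximate lifts, and use the ANR small-homotopy lemma to make the patching errors continuous and summable --- and you correctly identify the conversion of the pointwise data $\varphi_{pp_1}$ into \emph{continuous} approximate trivializations as the technical heart. Two points deserve attention before this could be called a proof. First, Ferry's hypothesis is that the fibres are \emph{compact} ANRs; in the present setting compactness comes from the properness built into the definition of strong regularity, but you should say so explicitly rather than leave the statement with ``ANRs'' alone. Second, ``adaptive (Lebesgue-number) subdivision of $I$ that varies continuously with $z$'' for an arbitrary parameter space $Z$ is precisely the kind of gesture that conceals the difficulty: one cannot subdivide $I$ in a $z$-dependent way and concatenate lifts naively while keeping joint continuity in $(z,t)$. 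The standard route is to verify the HLP on a universal test object (a path-lifting function on the mapping path space), where compactness of $I$ alone gives a uniform Lebesgue number, and to use the ANR hypothesis together with a partition of unity on $U$ to glue the locally defined approximate lifts. You gesture at this in Step~3, but the order of quantifiers and the error bookkeeping are left unresolved, and that is where all the work is. As it stands this is a proof plan, not a proof.
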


\begin{remark}\label{rem-local-fibration}
Note that the properties of being an ANR or a Hurewicz fibration are local properties (cf. \cite{Fe78}), Theorem \ref{thm:strong-regular} was proved locally in \cite{Fe78}.
Moreover, the Lipschitz submersion in Theorem \ref{thm:Lipschitz-submersion} can be constructed locally (\cite{Ya96}).  Hence, both of them holds over $\epsilon$-almost Euclidean points in a complete Alexandrov space. And so are Theorem \ref{thm-Hurewicz} and Theorem \ref{thm:Lipschitz-submersion}. 
\end{remark}

According to Theorem \ref{thm:strong-regular} and the discussion above, Theorem \ref{thm-Hurewicz} holds if an $e^\epsilon$-LcL between Alexandrov spaces with almost Euclidean base space is strongly regular, and all its fibers are locally contractible. 

\subsection{Gradient estimate for an LcL}
Let us first recall a basis property of an
$e^\epsilon$-LcL $f:X\to Y$.
For any compact subset $S\subset Y$, let $\operatorname{dist}_S$ be
the distance function to $S$ in $Y$,
$$\operatorname{dist}_S(y)=\abs{yS}=\inf\{d(y,s): s\in S\}.$$
Then the two functions $\operatorname{dist}_S\circ f$ and
$\operatorname{dist}_{f^{-1}(S)}: X\to
\Bbb R_+$ satisfy (see Lemma 1.4 in \cite{RX12})
\begin{equation}\label{eq:lcl}
e^{-\epsilon}\cdot \operatorname{dist}_S\circ f \le
\operatorname{dist}_{f^{-1}(S)}\le e^\epsilon \cdot
\operatorname{dist}_S\circ f.\tag{2.3}
\end{equation}

Since LcL property is rescaling invariant, from now on we assume that $X$ is an Alexandrov space with curv $\ge -1$, $Y$ is an $n$-dimensional Alexandrov space with curv $\ge -1$ that is $\epsilon$-almost Euclidean. Let
$f:X\to Y$ be an $e^\epsilon$-LcL. 
Under the assumption that $Y$ is a Riemannian manifold,
we constructed in \cite{RX12} a neighborhood retraction $\varphi_p$ 
of $f$-fiber over $p\in Y$, which
is continuously depending on $p$ and can be used as a weaker
replacement of the horizontal lifting of minimal geodesics.
In the proof of Theorem \ref{thm-Hurewicz} we will apply it
to define controlled homotopy equivalences between nearby fibers.
Because now $Y$ is an Alexandrov space, 
for reader's convenience we recall
its construction and point out the differences to \cite{RX12} in below.

For an $\epsilon$-almost Euclidean point $p\in Y$, let $r_p$ denote the maximal number that there is a map 
$\varphi: B(p,r_p)\to \Bbb R^n$ satisfying (\ref{eq:almost-Euclidean}). Let $S_r(p)=\partial B_r(p)$ be the metric sphere 
around $p$ and let $x$ be any point in $B_r(p)\setminus\{p\}$.
We have the following estimate on the gradient of distance function
$\operatorname{dist}_{f^{-1}(S_r(p))}$.

\begin{lemma}\label{lem-gradient-estimate} Let $f:X\to Y$ and $0<r\le \min\{r_p,1\}$ be as above.
		Let $x$ be point in $f^{-1}(B_r(p))\setminus f^{-1}(p)$. 
		The gradient vector of $\operatorname{dist}_{f^{-1}(S_r(p))}$ 
		satisfies
		\begin{equation}\label{eq:gradient-estimate-totalspace}
		1\ge \abs{\nabla_x
			\operatorname{dist}_{f^{-1}(S_r(p))}}
		\ge  1 -
		(e^{2\epsilon}-1) \cdot \frac{2r^2}{\abs{x f^{-1}(p)}\cdot
			\abs{x f^{-1} (S_r(p))}}.\tag{2.4}
		\end{equation}
\end{lemma}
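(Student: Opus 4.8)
The target inequality is local near a point $x$, so I would work in the almost Euclidean chart $\varphi : B(p,r_p)\to\mathbb R^n$. The strategy is to compare the distance function $\dist_{f^{-1}(S_r(p))}$ with $\dist_{f^{-1}(p)}$ and exploit the LcL inequality (\ref{eq:lcl}), which relates both of these to the corresponding distance functions downstairs in $Y$, together with the almost-Euclidean estimate (\ref{eq:almost-Euclidean}) which lets me understand the downstairs distances $\dist_{S_r(p)}$ and $\dist_p$ very explicitly (they are, up to an $e^{\pm\epsilon}$ distortion, $r-|\varphi(y)\varphi(p)|$ and $|\varphi(y)\varphi(p)|$ respectively, since in $\mathbb R^n$ the distance to a sphere and to its center add up to the radius). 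The upper bound $|\nabla_x\dist_{f^{-1}(S_r(p))}|\le 1$ is automatic because any distance function on an Alexandrov space is $1$-Lipschitz.

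\textbf{Key steps.} First I would establish the pointwise relation
$$\dist_{f^{-1}(S_r(p))}(x)+\dist_{f^{-1}(p)}(x)\ \ge\ e^{-2\epsilon}\,r ,$$
obtained by applying (\ref{eq:lcl}) to both $S=S_r(p)$ and $S=\{p\}$, and then using that downstairs $\dist_{S_r(p)}(f(x))+\dist_p(f(x))\ge e^{-2\epsilon}r$ (from the almost-Euclidean chart and the triangle inequality in $\mathbb R^n$, where the two quantities sum to exactly $r$ before distortion). This says the two "competing" distance functions behave nearly like complementary coordinates along a radial direction. Second, I would invoke the standard first-variation / gradient comparison for distance functions on Alexandrov spaces with $\curv\ge\kappa$: for a distance function $\dist_A$, the value $|\nabla_x\dist_A|^2$ can be estimated below in terms of how much $\dist_A$ increases along gradient curves of the "opposite" distance function, or more directly via the formula relating $|\nabla_x\dist_A|$ to $\cos$ of the comparison angle between the direction to $A$ and competing directions. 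Concretely, along the gradient curve of $\dist_{f^{-1}(p)}$ starting at $x$, the function $\dist_{f^{-1}(S_r(p))}$ must decrease at a controlled rate, and feeding the quantitative relation from the first step into the concavity/semiconcavity estimates for distance functions on Alexandrov spaces yields
$$|\nabla_x\dist_{f^{-1}(S_r(p))}|\ \ge\ 1-(e^{2\epsilon}-1)\cdot\frac{2r^2}{|xf^{-1}(p)|\cdot|xf^{-1}(S_r(p))|}.$$
The two denominators $|xf^{-1}(p)|$ and $|xf^{-1}(S_r(p))|$ appear because the error term is weighted by the reciprocal of the distances to the two sets — this is exactly the form one expects: the estimate degenerates as $x$ approaches either the central fiber or the boundary fiber, where the radial structure is least controlled, and the numerator $r^2$ is the natural scale.

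\textbf{Main obstacle.} The delicate point is step two: making rigorous the passage from the additive relation $\dist_{f^{-1}(S_r(p))}+\dist_{f^{-1}(p)}\gtrsim r$ to a lower bound on the \emph{gradient}, since on an Alexandrov space the gradient of a distance function is defined through the space of directions and one must control comparison angles rather than genuine derivatives. I would handle this by using Petrunin's first-variation formula and the semiconcavity of $\dist_A^2$: choose a (quasi-)geodesic from $x$ toward $f^{-1}(S_r(p))$, estimate the comparison angle it makes with a geodesic toward $f^{-1}(p)$ using the near-equality in the triangle inequality (the defect is governed by the $e^{2\epsilon}-1$ term), and conclude that the gradient direction of $\dist_{f^{-1}(S_r(p))}$ is within a small angle of the direction \emph{away} from $f^{-1}(p)$, which forces $|\nabla_x\dist_{f^{-1}(S_r(p))}|$ close to $1$ with exactly the stated error. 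The bookkeeping of constants — tracking how the $\epsilon$-distortion in the chart and in (\ref{eq:lcl}) combine into the single factor $e^{2\epsilon}-1$ — is routine but is where one must be careful not to lose the clean form of the bound.
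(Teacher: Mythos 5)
Your Step 1 establishes the pointwise relation with the wrong direction of inequality. The lower bound $\dist_{f^{-1}(S_r(p))}(x)+\dist_{f^{-1}(p)}(x)\ge e^{-2\epsilon}r$ is essentially a consequence of the triangle inequality alone (any path from $p$ to $S_r(p)$ through $f(x)$ has length $\ge r$, then push through (\ref{eq:lcl})), and therefore carries no information from the almost-Euclidean chart; in particular it cannot detect the near-Euclidean structure that the lemma is really exploiting. What the argument actually requires is the complementary \emph{upper} bound $\dist_{f^{-1}(S_r(p))}(x)+\dist_{f^{-1}(p)}(x)\le e^{3\epsilon}r$ — this is where the chart enters essentially, since in $\mathbb R^n$ the distance to the center and the distance to the sphere sum to exactly $r$ — together with a \emph{lower} bound $|yz|\ge e^{-\epsilon}r$ on the separation between the nearest points $y\in f^{-1}(p)$ and $z\in f^{-1}(S_r(p))$ (from co-Lipschitzness alone). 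Only the combination forces the defect $|xy|+|xz|-|yz|\le(e^{3\epsilon}-e^{-\epsilon})r$ to be small, and that near-degeneracy is what powers the gradient estimate. To see why your lower bound alone is insufficient: moving along the geodesic from $x$ toward $y$ decreases $\dist_{f^{-1}(p)}$ at unit rate, so the lower bound gives $\dist_{f^{-1}(S_r(p))}(\gamma(t))\ge\dist_{f^{-1}(S_r(p))}(x)-D+t$ with $D=\dist_{f^{-1}(S_r(p))}(x)+\dist_{f^{-1}(p)}(x)-e^{-2\epsilon}r\ge 0$; dividing by $t\to 0^+$ yields nothing because $D/t\to\infty$, and controlling $D$ is exactly what the missing upper bound would do.

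Your Step 2 is also more indirect than needed, and you don't say how to make the passage through gradient curves and semiconcavity rigorous. The paper's own proof sidesteps this entirely: pick the nearest points $y$, $z$, assemble the three estimates above, and plug them into the law of cosines for the comparison triangle $\tilde\triangle(zxy)$. The near-degeneracy $|xy|+|xz|\approx|yz|$ forces $\cos\tilde\measuredangle_0(zxy)\le (e^{2\epsilon}-1)\frac{r^2}{|xz||xy|}-1$, so by Toponogov the actual angle at $x$ between the direction $v$ to $y$ and \emph{any} direction $w\in\Uparrow_x^{f^{-1}(S_r(p))}$ is at least $\tilde\measuredangle_0(zxy)$. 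Hence $d(\dist_{f^{-1}(S_r(p))})_x(v)=-\cos\measuredangle\bigl(v,\Uparrow_x^{f^{-1}(S_r(p))}\bigr)$ is bounded below by the stated expression, and this in turn bounds $|\nabla_x\dist_{f^{-1}(S_r(p))}|$ from below — with no appeal to gradient curves or semiconcavity, and with the error term appearing in exactly the stated form.
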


\begin{proof}	
	
	The proof is similar to Lemma 1.5 (1.5.1) in \cite{RX12}.
	Let $z\in f^{-1}(S_r(p))$, $y\in f^{-1}(p)$ be such that
	$|xz|=|x f^{-1}(S_r(p))|$ and $|xy|=|xf^{-1}(p)|$. 
	Let $v$ be the direction at $x$ of a minimal geodesic from $x$ to $y$.
	It suffices to bound 
	$\cos \measuredangle (v,w)$ from above for any direction $w$ from $x$ 
	to $f^{-1}(S_r(p))$.
	
	Since $f$ and $\varphi$ are $e^{\epsilon}$-LcLs, by (\ref{eq:lcl}) we 
	directly see
	$$\aligned  |xy| &\le e^{2\epsilon}\cdot |\varphi(f(x))\varphi(p)|, 
	\\ 
	|xz| &\le e^{2\epsilon}\cdot |\varphi(f(x))\varphi(S_r(p))|,\\
	|yz| &\ge |z f^{-1}(p)| \ge e^{-\epsilon}|f(z)p|=e^{-\epsilon}\cdot 
	r. \endaligned $$
	Moreover, 
	\begin{align*}
	|\varphi(f(x))\varphi(p)|+|\varphi(f(x))\varphi(S_r(p))|&\le 
	|\varphi(f(x))\varphi(p)|+ |\varphi(f(x))S_{e^\epsilon 
		r}(\varphi(p))|\\
	&= e^\epsilon r.
	\end{align*}
	Thus
	$$|yz|\le |xy|+|xz|\le 
	e^{2\epsilon}\cdot 
	(|\varphi(f(x))\varphi(S_r(p))|+|\varphi(f(x))\varphi(S_r(p))|)\le 
	e^{3\epsilon}r.
	$$
	
	Since the proof below is similar for different curvature lower bound, for simplicity we only prove for $\kappa=0$. By the Euclidean cosine law, we derive
	$$\aligned \cos \tilde \measuredangle_0 (zxy)&=\frac {|xz|^2+|xy|^2-
		|yz|^2}{2|xz|\cdot |xy|}\\
	&=\frac{(|xz|+|xy|)^2-|yz|^2}{2|xz|\cdot |xy|}-1\\
	&= \frac{(|xz|+|xy|-|yz|)\cdot (|xz|+|xy|+|yz|)}{2|xz|\cdot|xy|}-1\\
	&\le (e^{2\epsilon}-1) \cdot
	\frac{r^2}{|xz|\cdot|xy|} -1.
	\endaligned$$
\end{proof}

By Lemma \ref{lem-gradient-estimate} and a standard argument, for sufficient small $\epsilon$ ($e^{2\epsilon}\le
1.02368$), points in $f^{-1}(B_{\frac{2r}{3}}(p))$ can be flowed into
$f^{-1}(B_{\frac{r}{3}}(p))$ along gradient curves of
$\operatorname{dist}_{f^{-1}(S_r(p))}$ in a definite time.

\begin{lemma}[Lemma 1.5 in \cite{RX12}]\label{lem:gradient-estimate}
	For any $p\in Y$ and $r<\min\{r_p,
	\frac{1}{2e^\epsilon}\}$, there is a
	constant  $C_0(\epsilon)>0$ depending on $\epsilon$ such that for all
	$x\in f^{-1}(B_{\frac {2r}3}(p))$,  the gradient
	curve $\Phi(t,x)$ of the function
	$\operatorname{dist}_{f^{-1}(S_r(p))}$
	satisfies
	$$\Phi(x,t)\in f^{-1}(B_{\frac {r}3}(p)),
	\qquad t\ge C_0^{-1}\cdot\left(\frac{2}{3}e^{\epsilon} r
	-\abs{x f^{-1}(S_r(p))}\right).$$
\end{lemma}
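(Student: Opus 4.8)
The plan is to flow along the gradient of $h:=\dist_{f^{-1}(S_r(p))}$, read off its speed from Lemma~\ref{lem-gradient-estimate}, and translate the growth of $h$ back into membership in $f^{-1}(B_{r/3}(p))$ via the LcL inequality (\ref{eq:lcl}). Since the hypotheses and the LcL property are scale invariant I keep $\operatorname{curv}\ge-1$ and, as in Lemma~\ref{lem-gradient-estimate}, record only the $\kappa=0$ forms of the estimates; the assumption $r<\tfrac1{2e^\epsilon}$ keeps the bi-Lipschitz chart around $p$ (valid since $r<r_p$) and the auxiliary Euclidean spheres in the legal range, so that Lemma~\ref{lem-gradient-estimate} applies on all of $f^{-1}(B_r(p))$.

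First I would set up the flow. Because $f$ is proper and $\overline{B_r(p)}$ is compact, $f^{-1}(S_r(p))$ is a compact set, and on $f^{-1}(B_r(p))\setminus f^{-1}(S_r(p))$ the function $h$ is semiconcave --- near each point $z$ it agrees with the minimum, over a neighbourhood of its foot point on $f^{-1}(S_r(p))$, of the functions $\dist_a$, each of which is semiconcave away from $a$ with constant of order $\dist_a(z)^{-1}$. Hence the forward gradient curve $\Phi(t,x)$ of $h$ is defined, $t\mapsto h(\Phi(t,x))$ is nondecreasing with $\tfrac{d}{dt}h(\Phi(t,x))=\abs{\nabla_{\Phi(t,x)}h}^2$ a.e., and the curve never leaves $f^{-1}(B_r(p))$ (meeting $f^{-1}(S_r(p))$ would force $h=0$, contradicting $h(\Phi(t,x))\ge h(x)>0$). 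From (\ref{eq:lcl}) and the chart I record, for $z\in f^{-1}(B_r(p))$, the bounds $\dist_{f^{-1}(p)}(z)\ge e^{-\epsilon}\abs{f(z)\,p}$ and $h(z)\ge e^{-\epsilon}\bigl(r-\abs{f(z)\,p}\bigr)$, together with the \emph{threshold} $h(z)>\tfrac23 e^\epsilon r\Rightarrow z\in f^{-1}(B_{r/3}(p))$ (and so $h(\Phi(t,x))\le\tfrac23 e^\epsilon r$ whenever $\Phi(t,x)\notin f^{-1}(B_{r/3}(p))$).

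Now fix $x\in f^{-1}(B_{2r/3}(p))$. Then $h(x)=\abs{x\,f^{-1}(S_r(p))}\ge e^{-\epsilon}\tfrac r3$, hence $h(\Phi(t,x))\ge e^{-\epsilon}\tfrac r3$ for all $t$; and while $\Phi(t,x)\notin f^{-1}(B_{r/3}(p))$ also $\dist_{f^{-1}(p)}(\Phi(t,x))\ge e^{-\epsilon}\tfrac r3$. Plugging both lower bounds into (\ref{eq:gradient-estimate-totalspace}) gives, on that region,
\[
\abs{\nabla_{\Phi(t,x)}h}\ \ge\ 1-(e^{2\epsilon}-1)\,\frac{2r^2}{(e^{-\epsilon}r/3)^2}\ =\ 1-18\,e^{2\epsilon}(e^{2\epsilon}-1)\ =:\ c(\epsilon),
\]
which is positive as soon as $e^{2\epsilon}\le1.02368$. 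Hence $\tfrac{d}{dt}h(\Phi(t,x))\ge c(\epsilon)^2$ as long as $\Phi(t,x)\notin f^{-1}(B_{r/3}(p))$, so $h$ climbs from $\abs{x\,f^{-1}(S_r(p))}$ to the threshold $\tfrac23 e^\epsilon r$ --- and the curve lands in $f^{-1}(B_{r/3}(p))$ --- within time $c(\epsilon)^{-2}\bigl(\tfrac23 e^\epsilon r-\abs{x\,f^{-1}(S_r(p))}\bigr)$. Taking $C_0:=c(\epsilon)^2$ matches the asserted time bound.

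The step I expect to be the real (if mild) obstacle is the word ``$\Phi(x,t)\in f^{-1}(B_{r/3}(p))$ for \emph{all} $t\ge C_0^{-1}(\cdots)$'': I must rule out that the curve re-emerges from $f^{-1}(B_{r/3}(p))$ later on. This, however, is already contained in the computation of Lemma~\ref{lem-gradient-estimate}: the cosine estimate there, combined with the gradient inequality $\langle\nabla_zh,v\rangle\ge d_zh(v)$, shows that on $f^{-1}(B_r(p)\setminus B_{r/3}(p))$ the gradient $\nabla_z h$ makes a strictly acute angle with the direction $v$ of a minimal geodesic from $z$ to a nearest point of $f^{-1}(p)$, whence $\langle\nabla_z h,v\rangle>0$ and $t\mapsto\dist_{f^{-1}(p)}(\Phi(t,x))$ is strictly decreasing there; once $\dist_{f^{-1}(p)}(\Phi(t,x))<e^{-\epsilon}\tfrac r3$, which by (\ref{eq:lcl}) forces $\Phi(t,x)\in f^{-1}(B_{r/3}(p))$, this monotonicity prevents the curve from ever crossing back out. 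Apart from this, the proof is the elementary computation above resting on the already-proved Lemma~\ref{lem-gradient-estimate} and the standard gradient-curve theory for semiconcave functions on Alexandrov spaces; the precise constants $\tfrac23 e^\epsilon r$ and $1.02368$ come from the careful bookkeeping carried out in \cite{RX12}.
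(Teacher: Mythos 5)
Since the paper itself does not re-prove Lemma~\ref{lem:gradient-estimate} but cites \cite{RX12} and the sentence ``By Lemma~\ref{lem-gradient-estimate} and a standard argument, \dots'', your proof is a reconstruction of that ``standard argument'', and the skeleton --- read the speed of $h:=\dist_{f^{-1}(S_r(p))}$ off (\ref{eq:gradient-estimate-totalspace}) using the two lower bounds $h,\dist_{f^{-1}(p)}\ge e^{-\epsilon}r/3$, then convert the growth of $h$ into membership in $f^{-1}(B_{r/3}(p))$ via (\ref{eq:lcl}) --- is exactly what is intended. Two points deserve attention.

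The step you treat as immediate, that $h(z)>\tfrac23 e^\epsilon r$ forces $z\in f^{-1}(B_{r/3}(p))$, is in fact the only place the almost Euclidean hypothesis on $Y$ genuinely enters, and you do not justify it. Inequality (\ref{eq:lcl}) gives $\dist_{S_r(p)}(f(z))>\tfrac23 r$; to conclude $|f(z)p|<r/3$ you need the reverse inequality $\dist_{S_r(p)}(q)\le r-|qp|$ for $q=f(z)\in B_r(p)$. The triangle inequality only gives $\dist_{S_r(p)}(q)\ge r-|qp|$. The reverse requires extending a minimal geodesic $[pq]$ past $q$ until it meets $S_r(p)$. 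For a Riemannian base (the setting of Lemma~1.5 in \cite{RX12}) this is automatic, but for an Alexandrov $Y$ it must come from the bi-Lipschitz chart $\varphi$ of (\ref{eq:almost-Euclidean}): pushing forward to $\Bbb R^n$, extending the Euclidean ray from $\varphi(p)$ through $\varphi(q)$, and pulling back. That argument produces a point $y\in S_r(p)$ with $|yq|\le e^{2\epsilon}r-|qp|$, not $r-|qp|$, so with the stated threshold $\tfrac23 e^\epsilon r$ you only land in $f^{-1}(B_{(e^{2\epsilon}-2/3)r}(p))$, which for $e^{2\epsilon}=1.02368$ is strictly larger than $f^{-1}(B_{r/3}(p))$. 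This is a genuine gap: either the threshold constant or the target radius has to absorb an extra $e^{2\epsilon}$, and you should say explicitly where the almost Euclidean structure of $Y$ is used and how the constants are reconciled. (For the paper's later use of Lemma~\ref{lem:gradient-estimate}, only the two properties in (\ref{eq:gradient-flow}) matter, so a small enlargement of the target radius is harmless; but as a proof of the literal statement the constant accounting is not closed.)

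Your closing paragraph, on the other hand, is more complicated than it needs to be and imports a fresh difficulty. To rule out the curve re-emerging from $f^{-1}(B_{r/3}(p))$ you argue that $t\mapsto\dist_{f^{-1}(p)}(\Phi(t,x))$ is strictly decreasing, via a first-variation estimate for $\dist_{f^{-1}(p)}$; on an Alexandrov space this requires control of the angle of $\nabla h$ with \emph{every} direction in $\Uparrow^{f^{-1}(p)}_z$, not just one, and the inequality $d_z\dist_{f^{-1}(p)}(\nabla_z h)\le-\cos\measuredangle(v,\nabla_z h)\,\abs{\nabla_z h}$ needs to be set up with care. None of this is necessary: $h$ is nondecreasing along its own gradient flow, so once $h(\Phi(t_0,x))$ reaches the threshold value, $h(\Phi(t,x))$ stays at or above it for all $t\ge t_0$, and the (corrected) threshold implication then keeps $\Phi(t,x)$ inside $f^{-1}(B_{r/3}(p))$ for all later $t$. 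That is the whole ``for all $t\ge\ldots$'' claim. You should replace the angle/monotonicity-of-$\dist_{f^{-1}(p)}$ argument with this one-line observation.
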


\subsection{Neighborhood retraction of a fiber}
In this part we construct a neighborhood retraction around a fiber $f^{-1}(p)$ which continuously depends on $p$.

We first define a gradient deformation of
$\operatorname{id}_{f^{-1}(B_{\frac{2r}{3}}(p))}$ which maps
$f^{-1}(B_{\frac{2r}{3}}(p))$ into $f^{-1}(B_{\frac{r}{3}}(p))$ and
fixes $f^{-1}(B_{0.3r}(p))$.
Let
$$T_{p,r}(x)=\max\left\{0,C_0^{-1}\cdot\left(\frac{2}{3}e^{\epsilon}
r -\abs{x f^{-1}(S_r(p))}\right)\right\},$$
and $\Phi_p^{T_{p,r}(x)}(x)=\Phi(x,T_{p,r}(x))$ be the gradient
deformation of $\operatorname{id}_{f^{-1}(B_{\frac{2r}{3}}(p))}$ 
with respect to $\operatorname{dist}_{f^{-1}(S_r(p))}$.
Then by Lemma \ref{lem:gradient-estimate} and direct calculation, for $e^{2\epsilon}\le 1.02368$
and $r<\min\{r_p,
\frac{1}{2e^\epsilon}\}$, we have

\begin{equation}\label{eq:gradient-flow}
\begin{cases}
\Phi_p^{T_{p,r}(x)}(x)\in f^{-1}(B_{\frac {r}3}(p)), &\forall\; x\in
f^{-1}(B_{\frac{2r}{3}}(p)),\\
T_{p,r}(x)=0, &\forall\; x\in
f^{-1}(B_{0.3 r}(p)).\tag{2.5}
\end{cases}
\end{equation}

In \cite[Proposition 1.6]{RX12} we proved that $\Phi_p^{T_{p,r}(x)}(x)$ is continuous both in $p$ and $x$, provided that $Y$ is a Riemannian manifold and $r$ is smaller than the injectivity radius of $Y$. In the following we prove the same holds when $Y$ is an almost Euclidean Alexandrov space.

\begin{lemma}\label{lem-continuity} Let $0<\epsilon<\ln \sqrt{1.02368}$, and let $f:X\to Y$ be an $e^\epsilon$-LcL between Alexandrov spaces such that $Y$ is $(\mu_0,\epsilon)$-almost Euclidean. Then for any $0<r<\frac{1}{2}\cdot \min\{\mu_0,1\}$,
$$ \Psi:\bigcup_{p\in Y} \{p\}\times
f^{-1}(B_{\frac{2r}{3}}(p))\subset Y\times X
\to X,\quad \Psi(p,x)=\Phi_p^{T_r(x)}(x)$$
is a continuous map.
\end{lemma}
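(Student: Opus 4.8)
The plan is to establish continuity of $\Psi$ by verifying it at a fixed pair $(p_0, x_0)$ in two stages: first I would show continuity in the base variable $p$ by controlling how the sublevel sets, distance functions, and gradient curves of $\operatorname{dist}_{f^{-1}(S_r(p))}$ vary as $p$ moves, and then I would combine this with joint continuity of the gradient flow map $(t,x)\mapsto \Phi(t,x)$ of a single function together with the continuity of the stopping time $T_{p,r}(x)$. The key external input is the continuity of gradient flows of semiconcave (here, DC) functions on Alexandrov spaces with respect to the function, the initial point, and the time parameter — this is standard (Petrunin, Perel'man) once one has a uniform one-sided Lipschitz bound on the right-hand side, and here the relevant functions $\operatorname{dist}_{f^{-1}(S_r(p))}$ are uniformly $(-1)$-concave in a neighborhood because $r\le 1$ and $X$ has $\operatorname{curv}\ge -1$.

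The main point to address, which is exactly where the Riemannian proof in \cite[Proposition 1.6]{RX12} used the injectivity radius, is the continuity of the \emph{family} of functions $p\mapsto \operatorname{dist}_{f^{-1}(S_r(p))}$ on a compact neighborhood. First I would fix $p_0$ and $0<r<\tfrac12\min\{\mu_0,1\}$, pass to the almost-Euclidean chart $\varphi$ on $B(p_0,\mu_0)$ guaranteed by $(\mu_0,\epsilon)$-almost Euclideanness, and observe that for $p$ near $p_0$ the sphere $S_r(p)$ in $Y$ is uniformly Hausdorff-close to $S_r(p_0)$: indeed $(2.1)$ makes $\varphi$ a bi-Lipschitz identification under which these spheres correspond to metric spheres of a fixed radius (up to $e^{\pm\epsilon}$) about the moving centers $\varphi(p)$, $\varphi(p_0)$, so $d_H(S_r(p), S_r(p_0))\to 0$ as $p\to p_0$. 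Applying $(2.3)$ together with the elementary inequality $|\operatorname{dist}_A(x)-\operatorname{dist}_B(x)|\le d_H(A,B)$, and using that the LcL property forces $d_H(f^{-1}(A), f^{-1}(B))$ to be controlled by $e^\epsilon d_H(A,B)$ for compact $A,B\subset Y$ (again via $(2.3)$, or directly from $(1.7)$), I get that $\operatorname{dist}_{f^{-1}(S_r(p))}\to \operatorname{dist}_{f^{-1}(S_r(p_0))}$ uniformly on the relevant compact set $f^{-1}(\overline{B_{3r/4}(p_0)})$, which is compact since $f$ is proper. This uniform convergence of the defining functions is the hypothesis needed to invoke continuity of the gradient flow in the function argument.

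With that in hand I would finish as follows. Continuity of the stopping time: $T_{p,r}(x)=\max\{0, C_0^{-1}(\tfrac23 e^\epsilon r - \operatorname{dist}_{f^{-1}(S_r(p))}(x))\}$ is a continuous function of $(p,x)$ because $\operatorname{dist}_{f^{-1}(S_r(p))}(x)$ is, by the previous paragraph, jointly continuous — uniform-in-$x$ convergence of the functions plus continuity of each limit function gives joint continuity of the evaluation. Then $\Psi(p,x)=\Phi\big(\operatorname{dist}_{f^{-1}(S_r(p))}\big)\big(T_{p,r}(x), x\big)$, and I write this as the composition of $(p,x)\mapsto \big(\operatorname{dist}_{f^{-1}(S_r(p))}, x, T_{p,r}(x)\big)$ with the gradient-flow evaluation map, which is continuous in all three slots by the cited gradient-flow continuity theorem on Alexandrov spaces (noting the flow is defined for all these times since, by Lemma \ref{lem:gradient-estimate} and $(2.5)$, within time $T_{p,r}(x)$ the curve stays in $f^{-1}(B_{2r/3}(p))$ where the function is genuinely increasing and no singular behaviour of the flow intervenes). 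Hence $\Psi$ is continuous. I expect the main obstacle to be purely bookkeeping: making the Hausdorff-distance estimates between the moving fibers $f^{-1}(S_r(p))$ uniform and verifying the hypotheses of the Alexandrov gradient-flow continuity result precisely (in particular that we only ever flow inside a region where the relevant distance function is uniformly semiconcave and has no critical points, so the comparison between flows of nearby functions does not degenerate), rather than any genuinely new idea beyond replacing "injectivity radius of $Y$" by "$(\mu_0,\epsilon)$-almost Euclidean chart of radius $\mu_0$".
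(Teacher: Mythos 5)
Your overall architecture matches the paper's: reduce continuity of $\Psi$ to closeness of the defining functions $\operatorname{dist}_{f^{-1}(S_r(p))}$, convert that (via the LcL estimates (2.3)) to Hausdorff closeness of the fibers $f^{-1}(S_r(p))$ and then of the spheres $S_r(p)$ in the base, and finally invoke stability of gradient curves under uniform convergence of the semiconcave potentials. That part is right and is exactly the paper's reduction.

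The gap is in the only genuinely geometric step, namely your claim that $d_H\bigl(S_r(p),S_r(p_0)\bigr)\to 0$ as $p\to p_0$ because the almost-Euclidean chart $\varphi$ identifies $S_r(p)$ with a ``metric sphere of fixed radius (up to $e^{\pm\epsilon}$)'' about $\varphi(p)$. The chart is only bi-Lipschitz, so all you actually know is that $\varphi\bigl(S_r(p)\bigr)$ lies in the Euclidean \emph{annulus} $\{\,y: e^{-\epsilon}r\le |y-\varphi(p)|\le e^\epsilon r\,\}$, which has thickness of order $(e^\epsilon-e^{-\epsilon})\,r$ --- a quantity independent of $|pp_0|$. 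There is no control on where inside this annulus the image of the sphere sits, so the argument yields at best $d_H\bigl(S_r(p),S_r(p_0)\bigr)\lesssim \epsilon r + |pp_0|$, which does \emph{not} tend to zero as $p\to p_0$. Consequently one cannot conclude uniform convergence of the potentials, and the appeal to gradient-flow stability does not go through on the strength of what you have proved.

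The paper fills this gap with a Toponogov argument rather than a chart argument: it passes to the midpoint $z$ of $[pp_0]$, notes both spheres lie in the annulus $B_{r+\epsilon_1}(z)\setminus B_{r-\epsilon_1}(z)$ with $\epsilon_1=|pp_0|$, reduces to estimating $d_H\bigl(S_{r-\epsilon_1}(z),\,S_{r+\epsilon_1}(z)\bigr)$ for the single center $z$, handles the easy inclusion by moving $2\epsilon_1$ along $[xz]$, and --- this is the step you are missing --- handles the outward inclusion by producing, via the comparison-angle estimate from the proof of Lemma~\ref{lem-gradient-estimate}, a far point $y$ with $\tilde\measuredangle_{-1}(zxy)\ge\tfrac{\pi}{2}+\theta$ for a definite $\theta>0$; the triangle version of Toponogov then yields a point $y_1\in[xy]$ on $S_{r+\epsilon_1}(z)$ with $|xy_1|\lesssim\epsilon_1$. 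That hinge inequality is what upgrades the bound from $O(\epsilon r)$ to $O(|pp_0|)$, and it is exactly the geometric content that a naive use of the bi-Lipschitz chart cannot replace. You should replace the chart argument in your second paragraph with this Toponogov step (or an equivalent lower bound on $|\nabla\operatorname{dist}_{S_r(z)}|$ as in (2.9.1)), and the rest of your write-up then works as intended.
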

\begin{proof}
	Since the proof is similar to \cite[Proposition 1.6]{RX12}, 
	we give a sketch proof by pointing out the difference.

	Because the gradient curves are stable as function converges
	(\cite{Petr07}), it suffices to show that the distance functions $\dist_{f^{-1}(S_r(p))}, \dist_{f^{-1}(S_r(q))}$ (to $f^{-1}(S_r(p))$ and $f^{-1}(S_r(q))$ respectively) are $C|pq|$-close for small $|pq|$ and a constant $C$. 
	
	By the definition of LcL, it is easy to verify (see \cite[Lemma 1.4, Lemma 1.7]{RX12}) that the Hausdorff distance and the difference between $\dist_{f^{-1}(S_r(p))}$ and $\dist_{f^{-1}(S_r(q))}$ satisfy 
	\begin{align}
	d(\dist_{f^{-1}(S_r(p))}, \dist_{f^{-1}(S_r(q))})&=d_H(f^{-1}(S_r(p)), f^{-1}(S_r(q))),\tag{\ref{lem-continuity}.1}\\
	d_H(f^{-1}(S_r(p)), f^{-1}(S_r(q)))&\le e^\epsilon\cdot d_H(S_r (p), S_r(q)).\tag{\ref{lem-continuity}.2}
	\end{align}
	
	Let $d(p,q)=\varepsilon_1$. By (\ref{lem-continuity}.1) and (\ref{lem-continuity}.2), what remains is to show $S_r(p)$ and $S_r(q)$ are $C\epsilon_1$-close in Hausdorff distance.
	
	Let $z$ be a middle point in a minimal geodesic $[pq]$. Since both $S_r(p)$ and $S_r(q)$ lie in the annulus $B_{r+\varepsilon_1}(z)\setminus B_{r-\varepsilon_1}(z)$, it is easy to see that one only needs to bound the Hausdorff distance between metric spheres $S_{r+\varepsilon_1}(z)$ and $S_{r-\varepsilon_1}(z)$, i.e., for some constant $C$, $d_H(S_{r-\varepsilon_1}(z),S_{r+\varepsilon_1}(z))\le C\epsilon_1$. 
	
	Indeed, for any point $x\in S_{r+\varepsilon_1}(z)$, since the point $x_1$ in a minimal geodesic $[xz]$ with distance $|x_1x|=2\varepsilon_1$ lies in $S_{r-\varepsilon_1}(z)$, $S_{r+\varepsilon_1}(z)$ lies in $2\varepsilon_1$-neighborhood of $S_{r-\varepsilon_1}(z)$. 
	
	Conversely, let $x\in S_{r-\varepsilon_1}$. By the proof of Lemma \ref{lem-gradient-estimate}, there is a point $y$ in $S_{2r}(z)$ such that the comparison triangle $\tilde\measuredangle_{-1}(zxy)$ is larger than $\pi/2$ by a positive definite error $\theta>0$.
	By the triangle version of Toponogov theorem, there exists $y_1$ in $[xy]$ with distance $|xy_1|\le \frac{2\varepsilon_1}{-\cos \tilde\measuredangle_{-1}(zxy)}$ such that $|y_1z|=r+\varepsilon_1$.
\end{proof}

Next, let us repeat the construction above for the sequence
$\{r_i=\frac{r}{2^i}\}_{i=0,1,2,\cdots}$
and let $\Phi_{p,i}^{T_{p,i}}(x)=\Phi_{p,i}(x,T_{p,r_i}(x))$ be the
gradient curves of $\operatorname{dist}_{f^{-1}(S_{r_i}(p))}$ at $x$
with time $T_{p,r_i}(x)$. By (\ref{eq:gradient-flow}),
$\Phi_{p,i}^{T_{p,i}}:f^{-1}(B_{r_i}(p))\to X$
takes $f^{-1}(B_{\frac{2}{3}\cdot \frac{r}{2^i}}(p))$ into 
$f^{-1}(B_{\frac{1}{3}\cdot\frac{r}{2^{i+1}}}(p)),$
and $$\left .\Phi_{p,i}^{T_{p,i}}
\right |_{f^{-1}(B_{0.3\frac{r}{2^i}}(p))} = \operatorname{id}.$$
Hence the iterated gradient deformations
$$\Phi_{p,i}^{T_{p,i}}\circ \Phi_{p,i-1}^{T_{p,i-1}}\circ \cdots
\Phi_{p,0}^{T_{p,0}}$$
is well-defined on $f^{-1}(B_{\frac{2r}{3}}(p))$ and its restriction
on $f^{-1}(B_{0.3\frac{r}{2^i}}(p))$ is identity.
Because $$T_{p,r_i}(x)\le \frac{r}{2^{i-1}}\cdot
\frac{e^\epsilon}{3}\cdot C_0^{-1},$$ it can be
directly verified that the sequence of maps
\begin{gather*}
\Psi_i:\bigcup_{p\in Y} \{p\}\times f^{-1}(B_{\frac{2r}{3}}(p))
\to X,\\
\Psi_i(p,x)=\Phi_{p,i}^{T_{p,i}}\circ \Phi_{p,i-1}^{T_{p,i-1}}\circ
\cdots \Phi_{p,0}^{T_{p,0}}(x)
\end{gather*}
uniformly converges. The limit $\varphi_p(x)=\displaystyle\lim_{i\to
	\infty}\Psi_i(p,x)$ gives a retraction
from the neighborhood $f^{-1}(B_{\frac{2r}{3}}(p))$ to $f^{-1}(p)$,
which by Lemma \ref{lem-continuity} is continuous both in $p$ and $x$. We summarize it to the following proposition.

\begin{proposition}	\label{prop:gradient-retract}
	For any $0<r<\frac{1}{2}\min\{\mu_0,1\}$, there is a deformation retraction
	$\varphi_p(x)$ from a neighborhood $f^{-1}(B_{\frac{2r}{3}}(p))$ to
	the fiber $f^{-1}(p)$ such that  
	$$\varphi: \bigcup_{p\in Y} \{p\}\times
	f^{-1}(B_{\frac{2r}{3}}(p))\to X, \quad \varphi(p,x)=\varphi_p(x)$$
	is continuous both in $p$ and $x$, and 
	satisfies 
	\begin{enumerate} \numberwithin{enumi}{theorem}
		\item $\varphi_p(x)=x$ for any $x\in
		f^{-1}(p)$, and 
		\item $\abs{x \varphi_p(x)} \le
		2C_1r,
		$ for some constant
		$C_1(\epsilon)$ depending only on $\epsilon$.
	\end{enumerate}
\end{proposition}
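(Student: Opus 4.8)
The plan is to take $\varphi_p$ to be the limit map $\varphi_p(x)=\lim_{i\to\infty}\Psi_i(p,x)$ already assembled just above the statement, and to check the four assertions by unwinding that construction. First I would verify that the composition defining $\Psi_i$ really is well defined on $f^{-1}(B_{\frac23 r}(p))$: applying (\ref{eq:gradient-flow}) at scale $r_i=r/2^i$, the deformation $\Phi_{p,i}^{T_{p,i}}$ sends $f^{-1}(B_{\frac23 r_i}(p))$ into $f^{-1}(B_{\frac13 r_i}(p))$, and since $\tfrac13 r_i=\tfrac23 r_{i+1}$ this is exactly the domain of $\Phi_{p,i+1}^{T_{p,i+1}}$; likewise each $\Phi_{p,j}^{T_{p,j}}$ restricts to the identity on $f^{-1}(B_{0.3 r_j}(p))\supseteq f^{-1}(B_{0.3 r_i}(p))$ for $j\le i$, so $\Psi_i(p,\cdot)$ is the identity on $f^{-1}(B_{0.3 r_i}(p))$. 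The hypotheses $r_i<\min\{r_p,\tfrac1{2e^\epsilon}\}$ of Lemmas~\ref{lem:gradient-estimate} and~\ref{lem-continuity} hold for every $i$ because $0<r<\tfrac12\min\{\mu_0,1\}$ and $0<\epsilon<\ln\sqrt{1.02368}$.

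Next I would establish uniform convergence and assertion (2) at once. A gradient curve of $\dist_{f^{-1}(S_{r_i}(p))}$ has speed $\abs{\nabla\dist_{f^{-1}(S_{r_i}(p))}}\le 1$, so $\Phi_{p,i}^{T_{p,i}}$ moves any point a distance at most $T_{p,r_i}(x)\le \tfrac{2r}{2^i}\cdot\tfrac{e^\epsilon}{3}C_0^{-1}$; hence $d\bigl(\Psi_i(p,x),\Psi_{i-1}(p,x)\bigr)$ is bounded by the same quantity, the sequence $\Psi_i(p,x)$ is uniformly Cauchy, and the limit $\varphi(p,x)$ exists. Summing the geometric series gives $\abs{x\,\varphi_p(x)}\le \sum_{i\ge 0}\tfrac{2r}{2^i}\cdot\tfrac{e^\epsilon}{3}C_0^{-1}=\tfrac{4e^\epsilon C_0^{-1}}{3}\,r$, which is (2) with $C_1(\epsilon)=\tfrac{2e^\epsilon}{3}C_0^{-1}$. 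To see that $\varphi_p$ lands in the fiber, recall from (\ref{eq:lcl}) (or directly from the LcL inclusions) that $f$ is $e^\epsilon$-Lipschitz, hence continuous; since $f\bigl(\Psi_i(p,x)\bigr)\in B_{\frac13 r_i}(p)$ by the image bound above and $r_i\to0$, we get $f(\varphi_p(x))=\lim_i f(\Psi_i(p,x))=p$, i.e. $\varphi_p\colon f^{-1}(B_{\frac23 r}(p))\to f^{-1}(p)$.

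Assertion (1) is then immediate: if $x\in f^{-1}(p)$ then $x\in f^{-1}(B_{0.3 r_i}(p))$ for every $i$, so $\Psi_i(p,x)=x$ for all $i$ and $\varphi_p(x)=x$; in particular $\varphi_p$ is a retraction onto $f^{-1}(p)$. For joint continuity of $\varphi$ I would combine Lemma~\ref{lem-continuity}, applied at each scale $r_i$ — which gives that $(p,y)\mapsto\Phi_{p,i}^{T_{p,r_i}(y)}(y)$ is continuous on $\{(p,y):f(y)\in B_{\frac23 r_i}(p)\}$ — with the observation that $\Psi_i$ is a finite composition of such maps, hence jointly continuous, together with the uniform convergence $\Psi_i\to\varphi$ just proved. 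Finally, to realise $\varphi_p$ as a deformation retraction I would concatenate the gradient flows themselves: running the $i$-th flow over a subinterval of $[0,1]$ of length $2^{-i}$ and letting the tail accumulate at $1$ produces a homotopy $H_p\colon f^{-1}(B_{\frac23 r}(p))\times[0,1]\to X$ with $H_p(\cdot,0)=\operatorname{id}$, $H_p(\cdot,1)=\varphi_p$, and $H_p(\cdot,t)=\operatorname{id}$ on $f^{-1}(p)$ for all $t$ (every point of the fiber has zero pushing time at every scale); continuity of $H_p$ at $t=1$ is the same uniform estimate used above.

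The substantive difficulty — that a \emph{single} gradient deformation $\Phi_p^{T_{p,r}}$ varies continuously with the base point $p$ — has already been handled in Lemma~\ref{lem-continuity}, where it reduces via stability of gradient curves under convergence of potentials to the Hausdorff-distance bound $d_H(S_r(p),S_r(q))\le C\abs{pq}$ on almost Euclidean $Y$. What remains is bookkeeping: confirming that each gradient curve, and hence the homotopy $H_p$, stays inside the relevant neighborhood (since the flow of $\dist_{f^{-1}(S_{r_i}(p))}$ increases that distance it cannot reach $f^{-1}(S_{r_i}(p))$, so the $f$-image of a point started in $B_{\frac23 r_i}(p)$ stays in $\overline{B_{r_i}(p)}\subseteq\overline{B_{r}(p)}$), and that the constants $C_0(\epsilon),C_1(\epsilon)$ depend only on $\epsilon$. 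I expect this last neighborhood-confinement check, rather than anything deeper, to be the only point needing a little care.
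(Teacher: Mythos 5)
Your proposal is correct and follows essentially the same line as the paper: the paper constructs the iterated gradient deformations $\Psi_i$, notes the geometric decay of the pushing times, and states that the uniform limit $\varphi_p=\lim_i\Psi_i(p,\cdot)$ is the desired retraction whose joint continuity comes from Lemma~\ref{lem-continuity}. You have simply made explicit the bookkeeping the paper leaves implicit --- that $B_{r_i/3}(p)=B_{2r_{i+1}/3}(p)$ makes the composition well-defined, that the uniform speed bound $\abs{\nabla\dist_{f^{-1}(S_{r_i}(p))}}\le1$ together with $T_{p,r_i}(x)\le\tfrac{2r}{2^i}\cdot\tfrac{e^\epsilon}{3}C_0^{-1}$ gives both uniform convergence and the value $C_1=\tfrac{2e^\epsilon}{3}C_0^{-1}$, that $f(\varphi_p(x))=p$ because $r_i\to0$, and the reparametrised-flow homotopy $H_p$ realising the \emph{deformation} retraction that the proposition asserts but the preceding text does not explicitly supply. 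One small point to double-check is the constant restriction: Lemma~\ref{lem:gradient-estimate} and (\ref{eq:gradient-flow}) require $r<\min\{r_p,\tfrac{1}{2e^\epsilon}\}$, while the proposition hypothesis $r<\tfrac12\min\{\mu_0,1\}$ only guarantees $r<\tfrac12$; since $\tfrac1{2e^\epsilon}<\tfrac12$ there is a sliver of $r$'s not covered. This is a shared cosmetic issue with the paper's statement (it is used downstream only for $\rho<\tfrac1{2e^\epsilon}$) and is fixed by tightening the hypothesis to $r<e^{-\epsilon}\tfrac12\min\{\mu_0,1\}$; it does not affect the substance of your argument.
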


\vspace{2mm}
\begin{proof}[Proof of Theorem \ref{thm-Hurewicz}]
	~
	
	Up to a rescaling we assume that the lower curvature bounds of both $X$ and $Y$ are 	$-1$.
	By Theorem \ref{thm:strong-regular}, it suffices to
	show that $f$ is strong regular and any fiber is an ANRs. 
	For any $p,q\in B$ with small distance
	$0<\abs{pq}<\frac{1}{2}\min\{r_p,
	\frac{1}{2e^\epsilon}\}$, let $\rho=2\abs{pq}$.
	By the definition of LcL, it is easy to see that
	$$e^{-\epsilon}\cdot |pq| \le
	d_H(f^{-1}(p),f^{-1}(q))\le e^\epsilon\cdot |pq|.$$
	Thus $f^{-1}(q)$ lies in $e^\epsilon \frac{\rho}{2}$-neighborhood of $f^{-1}(p)$ and vice versa.  
	By Proposition
	\ref{prop:gradient-retract}, there are neighborhood
	retractions $\varphi_p:
	f^{-1}(B_{\frac{2\rho}{3}}(p))\to f^{-1}(p)$
	and $\varphi_q: f^{-1}(B_{\frac{2\rho}{3}}(q))\to f^{-1}(q)$
	around $f^{-1}(p)$ and $f^{-1}(q)$ respectively.
	Then the homotopy equivalences between fibers can be chosen to be
	$\left.\varphi_p\right|_{f^{-1}(q)} :f^{-1}(q)\to f^{-1}(p)$
	and $\left.\varphi_q\right|_{f^{-1}(p)}:f^{-1}(p)\to f^{-1}(q)$, and
	the homotopies are
	$H_t=\varphi_p\circ \varphi_{\gamma(t)}:f^{-1}(p)\to f^{-1}(p)$ and
	$K_t=\varphi_q\circ \varphi_{\gamma(1-t)}:f^{-1}(q)\to f^{-1}(q)$,
	where $\gamma:[0,1]\to B$ is a minimal geodesic from $p$ to $q$.
	By (\ref{prop:gradient-retract}.2), $\abs{H_t(x)x}\le 4C_1\rho$ and
	$\abs{K_t(x)x}\le 4C_1\rho$. Therefore $f$ is strongly regular.
	
	According to \cite{Pr91} (cf. \cite{Ka07}, \cite{Petr07}), an Alexandrov space with curvature bounded below is
	locally contractible. For $x\in
	f^{-1}(p)$, let $U_x\ni x$ be a contractible neighborhood around
	$x$ and $H_t:U_x\to U_x$ be the homotopy from
	$\operatorname{id}_{U_x}$ to the retraction $r:U_x\to \{x\}$ such
	that $H_t(x)=x$. Then $\varphi_p\circ H_t$ is a homotopy
	from $\operatorname{id}_{U_x\cap f^{-1}(p)}$ to the retraction $r
	:U_x\cap f^{-1}(p)\to \{x\}$. Therefore $f^{-1}(p)$ is locally
	contractible and thus an absolute neighborhood retract.
\end{proof}

\subsection{Homotopic uniqueness of fibration}

Recently it is proved in \cite{JX18} that two collapsed metrics $g_{i}$ ($i=0,1$) on $M$ induces the same nilpotent Killing structure up to a diffeomorphism, provided $g_{i}$ are $L_0$-Lipschitz equivalent and sufficiently collapsed. 

In the following we prove that in the homotopic sense, the collapsing fibration in Theorem \ref{thm:Lipschitz-submersion} is unique.

We say that two Hurewicz fibrations $f_i:X_i\to Y$ ($i=0,1$) are
{\it fibrewise homotopy equivalent} if there are fiber-preserving maps 
$h:X_0 \to X_1$ and $g : X_1 \to X_0$ and fiber-preserving homotopies 
between $g\circ h$ and identity $1_{X_0}$, and between $h\circ g$ and
$1_{X_1}$.
We say that Hurewicz fibrations $f_i:X_i\to Y_i$ ($i=0,1$) are {\it 
	equivalent} if there is a homeomorphism $\psi:Y_0\to 
Y_1$ such that $\psi\circ f_0:X_0\to Y_1$ is fiber-homotopy equivalent 
to $f_1:X_1\to Y_1$. 

\begin{theorem}\label{thm:uniqueness}
	Let $X$, $Y_i$ $(i=0,1)$ be Alexandrov spaces with curv $\ge -1$ such that $Y_i$ satisfies (1.1.1), the dimension $\dim Y_1=\dim Y_2$, and (1.1.2) holds for $d_{GH}(X,Y_i)$. Then any two Hurewicz fibrations $f_i$ from $X$ to $Y_i$ $(i=0,1)$ provided by Theorem \ref{thm:Lipschitz-submersion} are equivalent.
\end{theorem}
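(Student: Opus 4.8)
The plan is to reduce the statement to the homotopic uniqueness of Hurewicz fibrations over a fixed base together with a rigidity statement for the base spaces $Y_0$ and $Y_1$. First I would observe that under the hypotheses of Theorem~\ref{thm:uniqueness}, both $Y_i$ are $\varkappa(\delta|n)$-almost Euclidean (by \cite[Theorem 5.4]{BGP92}) and lie within $\epsilon$ of $X$ in the Gromov--Hausdorff distance. Hence $d_{GH}(Y_0,Y_1)\le 2\epsilon$, and since both are $n$-dimensional Alexandrov spaces with curv $\ge -1$ whose $\delta$-strained radius is $\ge \mu_0$, a stability argument shows that for $\epsilon$ small (depending on $n,\mu_0$), there is a bi-Lipschitz homeomorphism $\psi:Y_0\to Y_1$ close to the composition of the two GH-approximations; one can construct $\psi$ explicitly from strainer coordinates, since on each $\mu_0$-ball both spaces are bi-Lipschitz to open sets of $\mathbb R^n$ via strainer maps, and the transition is a bi-Lipschitz map of Euclidean domains that can be patched by a partition of unity. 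This uses $\dim Y_0 = \dim Y_1$ crucially.

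Next I would replace $f_1:X\to Y_1$ by $\psi^{-1}\circ f_1: X\to Y_0$ (still a Hurewicz fibration, being the composition of a fibration with a homeomorphism of the base), so that both fibrations now have the same base $Y_0$. Both $f_0$ and $\psi^{-1}\circ f_1$ are then maps $X\to Y_0$ that are $e^{C\epsilon}$-LcL and hence, by (\ref{eq:lcl}) and the closeness of the two GH-approximations to each other, are uniformly $C'\epsilon$-close as maps into $Y_0$. The key step is then to invoke a fibrewise homotopy uniqueness principle: two Hurewicz fibrations over the same (paracompact, finite-dimensional) base that are connected by a sufficiently small homotopy of underlying maps — or more precisely, whose total spaces admit a map over the base — are fibrewise homotopy equivalent. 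Concretely, I would build the fibrewise equivalence $h:X\to X$ using the neighborhood retractions $\varphi_p$ from Proposition~\ref{prop:gradient-retract}: for $x\in X$ with $f_0(x)=p$, the point $\psi^{-1}f_1(x)$ is within $C'\epsilon$ of $p$, so $x$ lies in the domain $f_0^{-1}(B_{2r/3}(p))$ of $\varphi_p$-type retractions for the fibration $\psi^{-1}f_1$, allowing us to push $x$ into the $(\psi^{-1}f_1)$-fiber over $f_0(x)$; continuity in $p$ (Lemma~\ref{lem-continuity}) makes this fiber-preserving, and running the same construction in the other direction with a straight-line homotopy along gradient flows yields the homotopy inverse and the fibrewise homotopies.

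The main obstacle I anticipate is the base rigidity step — producing the homeomorphism $\psi:Y_0\to Y_1$ and controlling it well enough (bi-Lipschitz, and close to the given GH-approximations) that the resulting two fibrations over the common base are genuinely close as maps. Stability of Alexandrov spaces under GH-convergence with a uniform strained-radius bound is standard in spirit (it follows the Burago--Gromov--Perelman / Yamaguchi circle of ideas used already to construct $f$), but assembling local strainer homeomorphisms into a global $\psi$ requires care with the overlaps; alternatively one can take $\psi$ to be itself a regular almost Lipschitz submersion $Y_0\to Y_1$, which is a homeomorphism because $\dim Y_0 = \dim Y_1$ forces the fibers to be points, and then Remark~\ref{rem-local-fibration} applies. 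A secondary technical point is verifying that the fibrewise maps built from iterated gradient retractions are genuine homotopy equivalences (not merely maps over the base); this is exactly the kind of estimate already carried out in the proof of Theorem~\ref{thm-Hurewicz}, where $\varphi_p|_{f^{-1}(q)}$ and $\varphi_q|_{f^{-1}(p)}$ were shown to be mutually inverse up to controlled homotopy, so the same argument transfers with $f^{-1}(q)$ replaced by the $(\psi^{-1}f_1)$-fiber.
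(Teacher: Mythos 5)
Your outline matches the paper's proof: reduce to a single base via a bi-Lipschitz homeomorphism $\psi:Y_0\to Y_1$ (your second route, via Theorem~\ref{thm:Lipschitz-submersion} itself with point fibers in equal dimension, is what the paper uses, alongside \cite[Theorem 9.8]{BGP92}/\cite{SSW13}; partition-of-unity patching of strainer charts is not available on Alexandrov spaces, so discard that option), then prove a stability statement (Proposition~\ref{prop-stability}) that two $\sqrt{1.023}$-LcLs over the same almost-Euclidean base that are $\tfrac{\mu_0}{3}$-close are fibrewise homotopy equivalent, with $h,g$ built from the retractions of Proposition~\ref{prop:gradient-retract}. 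The one ingredient you should make explicit is the ``straight-line homotopy along gradient flows'': for each $x$ one needs a path $p_t$ in $Y$ from $f_0(x)$ to $\psi^{-1}f_1(x)$ that depends \emph{continuously} on $x$ — minimal geodesics alone do not provide this — and the paper supplies exactly this through the canonical pointed contraction $\tau(p,x,t)$ of Lemma~\ref{lem-ptcontract-base}, built from iterated gradient flows of $\dist_{S_r(p)}$ on the base, and then defines the fiber-preserving homotopy by $H_t(x)=\psi_{f_0(x)}\circ\varphi_{p_t}(x)$ with $p_t=\tau(p_1,p_0,t)$.
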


	It follows either from \cite[Theorem 9.8]{BGP92} (a key lemma of its proof has a flaw, for a correct proof see \cite{SSW13}), or from Theorem \ref{thm:Lipschitz-submersion}, that there is $e^{\varkappa(\epsilon|n)}$-bi-Lischitz map $\varphi:Y_0\to Y_1$ such that $\varphi\circ f_0$ is $100\epsilon$-close to $f_1$. Thus, the uniqueness in Theorem \ref{thm:uniqueness} is reduced to a stability result below.

\begin{proposition}\label{prop-stability}
	Let $X$ and $Y$ be two Alexandrov spaces with curv 
	$\ge\kappa$, where $Y$ is $(\mu_0,\ln\sqrt{1.023})$-almost Euclidean. If two $\sqrt{1.023}$-LcLs $f_{0},f_1:X\to Y$ are $\frac{\mu_0}{3}$-close, i.e., 
	\begin{equation}
	d(f_0,f_1)=\sup_{x\in X}|f_0(x)f_1(x)|<\frac{1}{3}\mu_0,  \tag{2.2}
	\end{equation}
	then they are equivalent as Hurewicz fibrations.
\end{proposition}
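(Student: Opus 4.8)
The plan is to construct an explicit fiberwise homotopy equivalence between $f_0$ and $f_1$ using the neighborhood retractions $\varphi^{(0)}_p$ and $\varphi^{(1)}_p$ associated to $f_0$ and $f_1$ respectively, exactly as in Proposition \ref{prop:gradient-retract}. Since $Y$ is already the common base, here the homeomorphism $\psi$ in the definition of equivalence is the identity, so what must be produced is a pair of fiber-preserving maps $h:X\to X$ (over $\operatorname{id}_Y$, sending $f_0^{-1}(y)$ to $f_1^{-1}(y)$) and $g:X\to X$ (sending $f_1^{-1}(y)$ to $f_0^{-1}(y)$) together with fiber-preserving homotopies $g\circ h\simeq 1_X$ and $h\circ g\simeq 1_X$. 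The hypothesis $d(f_0,f_1)<\tfrac13\mu_0$ is what makes this possible: for each $x$, the points $f_0(x)$ and $f_1(x)$ lie in a common almost-Euclidean ball, and a point of $f_0^{-1}(y)$ lies well inside the domain $f_1^{-1}(B_{2r/3}(y))$ of the retraction $\varphi^{(1)}_y$ once $r$ is chosen comparable to $\mu_0$.

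First I would fix $r$ with $\tfrac{3}{2}d(f_0,f_1) < \tfrac{2r}{3}$ and $r < \tfrac12\min\{\mu_0,1\}$, so that $d(f_0,f_1) < \tfrac{4r}{9}$; then for any $x\in X$ with $f_0(x)=y$ we have $|f_1(x)\,y| = |f_1(x)f_0(x)| < \tfrac{2r}{3}$, i.e. $x\in f_1^{-1}(B_{2r/3}(y))$. Define
\[
h(x) = \varphi^{(1)}_{f_0(x)}(x), \qquad g(x) = \varphi^{(0)}_{f_1(x)}(x).
\]
By Proposition \ref{prop:gradient-retract} applied to $f_1$, $h(x)\in f_1^{-1}(f_0(x))$, so $f_1\circ h = f_0$; symmetrically $f_0\circ g = f_1$. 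Continuity of $h$ in $x$ follows from the joint continuity of $\varphi^{(1)}$ in both variables together with continuity of $f_0$; likewise for $g$. Thus $h$ is a fiber-preserving map from $f_0$ to $f_1$ and $g$ one from $f_1$ to $f_0$.

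Next I would build the homotopies. Consider $g\circ h:X\to X$; for $x\in f_0^{-1}(y)$ we have $h(x)\in f_1^{-1}(y)$ and then $g(h(x)) = \varphi^{(0)}_y(h(x))\in f_0^{-1}(y)$, so $g\circ h$ is fiber-preserving over $\operatorname{id}_Y$. To homotope it to $1_X$ within fibers, I would use that both $\varphi^{(0)}_y$ and $\varphi^{(1)}_y$ are \emph{deformation} retractions: let $\Psi^{(1)}_t$ be the deformation of $f_1^{-1}(B_{2r/3}(y))$ with $\Psi^{(1)}_0=\operatorname{id}$, $\Psi^{(1)}_1 = \varphi^{(1)}_y$, each $\Psi^{(1)}_t$ fixing $f_1^{-1}(y)$ pointwise, and similarly $\Psi^{(0)}_t$. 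Crucially, all the intermediate points $\Psi^{(1)}_t(x)$ stay within $f_1^{-1}(B_{2r/3}(y))$ — this is visible from the construction, since the iterated gradient deformations only move a point deeper into $f_1^{-1}(B_{r/3}(y))$ — so the concatenation $\Psi^{(0)}_t$ after $\Psi^{(1)}_t$ is well-defined, giving a homotopy from $1_X$ through $h$ to $g\circ h$; reading it backwards and noting that $f_0$ is constant along it, we get a fiber-preserving homotopy $g\circ h\simeq 1_X$. The homotopy $h\circ g\simeq 1_X$ is obtained by exchanging the roles of $f_0$ and $f_1$. This establishes that $f_0$ and $f_1$ are fiberwise homotopy equivalent over $Y$, hence equivalent as Hurewicz fibrations (recall both are Hurewicz fibrations by Theorem \ref{thm-Hurewicz}).

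The main obstacle is the bookkeeping that keeps every intermediate point inside the common domain of both retractions, and that the homotopies are genuinely fiber-preserving at every time $t$ — one must check that $\Psi^{(1)}_t$ does not push a point of $f_0^{-1}(y)$ out of $f_1^{-1}(B_{2r/3}(y))$ before $\varphi^{(0)}_y$ can act on it, and that the gradient deformations defining $\varphi^{(i)}_y$ move points by at most $2C_1 r$ (Proposition \ref{prop:gradient-retract}(2)), which together with $d(f_0,f_1)<\tfrac{4r}{9}$ controls the base-point drift. Once $r$ is chosen small enough relative to $\mu_0$ that $2C_1 r + d(f_0,f_1)$ is still less than the radius governing the next retraction's domain, all the maps are defined and continuous, and the argument closes; this is routine but must be stated with care because the domains of $\varphi^{(0)}$ and $\varphi^{(1)}$ are only \emph{partial} neighborhoods, not all of $X$.
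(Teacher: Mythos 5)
The definitions of $h$ and $g$ in your proposal agree with the paper's, but the homotopy you construct has a genuine gap: the concatenation $\Psi^{(0)}\!*\Psi^{(1)}$ is \emph{not} $f_0$-fiber-preserving at intermediate times, contrary to your claim that ``$f_0$ is constant along it.'' Concretely, for $x\in f_0^{-1}(y)$ the first stage $\Psi^{(1)}_t(x)$ is defined by the gradient flows attached to the $f_1$-fibration; it moves $x$ toward $f_1^{-1}(y)$, and there is no reason for $f_0(\Psi^{(1)}_t(x))$ to stay equal to $y$ while it does so. The same is true of the second stage: $\Psi^{(0)}_t$ starts from $h(x)\in f_1^{-1}(y)$ (which in general is not in $f_0^{-1}(y)$) and lands in $f_0^{-1}(y)$ only at $t=1$. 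So your homotopy connects $1_X$ to $g\circ h$ inside $X$, but it leaves the $f_0$-fiber in both halves; it does not witness a \emph{fiberwise} homotopy equivalence. You acknowledge this worry in your last paragraph as ``bookkeeping'' about domains and base-point drift, but it is not a domain problem — even if all intermediate points stay in the domain of the relevant retraction, the deformation itself changes the $f_0$-coordinate.

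The paper resolves this by a different device, which you would need: instead of concatenating the two deformations, it post-composes with the $f_0$-retraction at \emph{every} time $t$ and varies the $f_1$-retraction's base point along a curve in $Y$. Explicitly, set $p_0=f_0(x)$, $p_1=f_1(x)$, and let $p_t=\tau(p_1,p_0,t)$ be the curve supplied by the pointed contraction $\tau$ on the base (Lemma \ref{lem-ptcontract-base}, a new lemma your sketch does not invoke). Then define $H_t(x)=\psi_{f_0(x)}\circ\varphi_{p_t}(x)$. Since the outer map $\psi_{f_0(x)}$ always retracts into $f_0^{-1}(f_0(x))$, $H_t$ is $f_0$-fiber-preserving by construction; the endpoints compute to $H_0=\psi_{f_0(x)}\circ\varphi_{f_0(x)}=g\circ h$ and $H_1=\psi_{f_0(x)}\circ\varphi_{f_1(x)}(x)=\psi_{f_0(x)}(x)=x$, using $\varphi_{f_1(x)}(x)=x$ and $\psi_{f_0(x)}(x)=x$. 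So the missing ingredient in your proposal is precisely the ``canonical'' base contraction $\tau$, which lets one interpolate between $\varphi_{f_0(x)}$ and $\varphi_{f_1(x)}$ continuously in $(x,t)$ while remaining inside a single $f_0$-fiber; without it the concatenation idea cannot be repaired.
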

Theorem \ref{prop-stability} is an improvement of a stability result in \cite{Xu12}.

In the proof of Proposition \ref{prop-stability}, we need a ``canonical'' pointed contraction on the base space $Y$, which are constructed similarly as in Proposition \ref{prop:gradient-retract}. 
\begin{lemma}\label{lem-ptcontract-base}
	Let $Y$ be a $(\mu_0,\ln 1.02368)$-almost Euclidean Alexandrov space with curv $\ge -1$. 
	There is a continuous pointwise contraction on $Y$, 
	$$\tau: \bigcup_{p\in Y} \{p\}\times
	B_{\mu_0/2}(p)\times [0,1]\to Y, \quad \tau(p,x,0)=x,\quad \tau(p,x,1)=p.$$
\end{lemma}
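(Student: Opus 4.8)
The plan is to mimic the construction of the fibrewise retraction $\varphi_p$ from Proposition~\ref{prop:gradient-retract}, but now applied to the identity map $\mathrm{id}:Y\to Y$ (which is trivially a $1$-LcL, hence a $\sqrt{1.023}$-LcL) on the almost-Euclidean space $Y$. First I would fix $r=\mu_0/2$ (after possibly shrinking to $r<\tfrac12\min\{\mu_0,1\}$, as in the cited statements; since the lower curvature bound is $-1$ this is harmless up to rescaling) and, for each $p\in Y$, run the iterated gradient deformations of the distance functions $\dist_{S_{r_i}(p)}$ for the dyadic scales $r_i=r/2^i$, exactly as in \S2.4. Lemma~\ref{lem:gradient-estimate} (with $f=\mathrm{id}$, $X=Y$, $\epsilon=\ln\sqrt{1.02368}$) guarantees that points of $B_{2r_i/3}(p)$ flow into $B_{r_i/3}(p)$ in controlled time while $B_{0.3\,r_i}(p)$ stays fixed, and Lemma~\ref{lem-continuity} gives joint continuity in $p$ and $x$. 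Thus the composition $\varphi_p:=\lim_i \Phi_{p,i}^{T_{p,i}}\circ\cdots\circ\Phi_{p,0}^{T_{p,0}}$ is a well-defined deformation retraction of $B_{2r/3}(p)=B_{\mu_0/3}(p)$ onto the single point $\{p\}$, continuous in $(p,x)$, with $|x\,\varphi_p(x)|\le 2C_1 r$; this already gives a pointed contraction on balls of radius $\mu_0/3$, and one enlarges the radius to $\mu_0/2$ by starting the dyadic sequence one step earlier (i.e.\ taking the top scale to be $\tfrac34\mu_0$, so that $B_{2r/3}(p)=B_{\mu_0/2}(p)$), which only changes the constants.

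Concretely, I would phrase the contraction as follows. The iterated-deformation approximants $\Psi_i(p,x)$ are each continuous in $(p,x)$ and, by the geometric-series bound on the pushing times $T_{p,r_i}(x)\le \tfrac{r}{2^{i-1}}\tfrac{e^\epsilon}{3}C_0^{-1}$, they converge uniformly. Reparametrising the concatenated gradient flows by a single time parameter $s\in[0,1]$ — say, assign to the $i$-th flow block the time subinterval of length $2^{-(i+1)}$ — produces a homotopy $\tau(p,x,s)$ with $\tau(p,x,0)=x$ and $\tau(p,x,1)=\varphi_p(x)=p$, continuous in all three variables by the same stability-of-gradient-curves argument (Petrunin) used in Lemma~\ref{lem-continuity}. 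That the endpoint is exactly $p$ and not merely a point of $\bigcap_i B_{r_i/3}(p)=\{p\}$ is automatic because the balls shrink to $p$.

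The main obstacle, and the only place any real work is needed, is the continuity of $\tau$ in $p$ near the basepoint itself — that is, at points of the form $(p,x,s)$ with $x$ close to $p$ and $s$ close to $1$, where infinitely many gradient blocks are being concatenated. This is handled exactly as the uniform convergence in \S2.4: the tail of the concatenation after scale $r_i$ moves points a distance $O(r/2^i)$, uniformly in $p$, so the limit homotopy inherits continuity from the finite approximants. A second, minor point is that I must check $\dist_{S_{r_i}(p)}$ on $Y$ behaves well under the gradient flow despite $Y$ having no manifold structure; but this is precisely the content of Lemma~\ref{lem-gradient-estimate} and Lemma~\ref{lem:gradient-estimate} applied with $f=\mathrm{id}$, together with Perelman's local contractibility and the general theory of gradient curves of semiconcave functions on Alexandrov spaces, all of which are available. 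I would therefore present the proof as ``apply the construction of Proposition~\ref{prop:gradient-retract} with $X=Y$ and $f=\mathrm{id}$, then reparametrise the concatenated flows into a single homotopy parameter,'' and only spell out the endpoint identification $\tau(p,x,1)=p$ and the tail estimate giving continuity at $s=1$.
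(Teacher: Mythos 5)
Your proposal is correct and follows essentially the same route as the paper: the paper also runs the iterated gradient-flow construction of Proposition~\ref{prop:gradient-retract} on $Y$ itself (equivalently, with $f=\mathrm{id}$, so that the ``fiber'' is $\{p\}$ and the distance functions $\dist_{f^{-1}(S_{r_i}(p))}$ reduce to $\dist_{S_{r_i}(p)}$), then reparametrises the concatenated flows into a single $[0,1]$-parameter. The only cosmetic difference is the choice of reparametrisation — you allot dyadic subintervals of fixed length $2^{-(i+1)}$ to each flow block, whereas the paper sets $T(p,x)=\sum_i T_{p,r_i}(x)$ and takes $\tau(p,x,t)=\psi(p,x,tT(p,x))$ — but both are valid and both require exactly the same uniform tail estimate to certify joint continuity at $s=1$.
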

\begin{proof}
	Note that the estimates in Lemma \ref{lem-gradient-estimate} and \ref{lem:gradient-estimate} also holds for the distance function $\dist_{S_r(p)}$ for $0<r\le \min\{\mu_0,1\}$.
	Let $\psi(p,x,t)$ be the limit of iterated gradient flows of $\dist_{S_{r_i}(p)}$ for $r_i=2^{-i}\mu_0$ with time $t\in [0,T_{p,r_i}(x)]$, where $C_0$ is the constant in Lemma \ref{lem:gradient-estimate} and $T_{p,r_i}(x)=\max\{0, C_0^{-1}(\frac{2}{3}e^\epsilon r_i-|xS_{r_i}(p)|)\}$. Let $T(p,x)=\sum_{i=0}^\infty T_{p,r_i}(x)$. It follows from the proof of Proposition \ref{prop:gradient-retract} directly that the map $\tau(p,x,t)=\psi(p,x,tT(p,x))$ satisfies the requirement of Lemma \ref{lem-ptcontract-base}.
\end{proof}

\vspace{2mm}
\begin{proof}[Proof of Proposition \ref{prop-stability}]
	~
	
	Let $f_0,f_1:X\to Y$ be the $\sqrt{1.023}$-LcLs between Alexandrov spaces with a $(\mu_0,\ln\sqrt{1.023})$-almost Euclidean base $Y$. We now construct 
	fiber-preserving maps $h,g:X\to X$ and fiber-preserving homotopies 
	$g\circ h$ to the identity $1_X$ and from $h\circ g$ to $1_X$ as
	follows.
	
	For any point $x\in X$, let $p=f_0(x)\in B$, let $F_0(p)$ be
	the fiber $f^{-1}_0(p)$ and $F_1(p)=f^{-1}_1(p)$. Suppose that 
	$d(f_0,f_1)<\frac{\mu_0}{3}$. Then by (\ref{eq:lcl}), $F_0(p)$ lies in the
	$\sqrt{1.023}\frac{\mu_0}{3}$-neighborhood of $F_1(p)$. Let $\varphi_p$
	be the neighborhood retraction of $F_1(p)$ in Proposition \ref{prop:gradient-retract} with respect to $f_1$,  we define
	$h:X\to X$ by $h(x)=\varphi_p(x)=\varphi_{f_0(x)}(x)$. Then the
	continuous map $h:X\to X$ is globally defined and maps all fibers of
	$f_0$ into that of $f_1$. Similarly we define $g:X\to X$ through the
	neighborhood retraction of $f_0$-fibers such that $f_0\circ
	g=f_1$, where $g(x)=\psi_{f_1(x)}(x)$ and $\psi_{q}$ is the
	neighborhood retraction of $f_0^{-1}(q)$ with respect to $f_0$.
	
	Note that $f_1(\varphi_{f_0(x)}(x))=f_0(x)$, thus $$g\circ h(x)=\psi_{f_1(\varphi_{f_0(x)}(x))}(\varphi_{f_0(x)}(x))=\psi_{f_0(x)}\circ \varphi_{f_0(x)}(x).$$
	Moreover, since $\varphi_{f_1(x)}$ is a neighborhood retract to $F_1(f_1(x))$, $\varphi_{f_1(x)}(x)=x$. Similarly, $\psi_{f_0(x)}(x)=x$, and thus
	$$\psi_{f_0(x)}\circ \varphi_{f_1(x)}=\operatorname{1}_X:X\to X.$$
	For $p_0=f_0(x)$, let $p_1=f_1(x)$ and let $p_t=\tau(p_1,p_0,t)$ be the map provided by Lemma \ref{lem-ptcontract-base}. Then $p_t$ is a curve from $p_0$ to $p_1$ continuously depending on $x$ and $t$. We define the
	fiber-preserving homotopy $H_t:X\to X$ by
	$H_t(x)=\psi_{f_0(x)}\circ \varphi_{p_t}(x)$. Then
	$H:[0,1]\times X\to X$ is a $f_0$-fiber-preserving continuous map such that $H_0=g\circ h$
	and $H_1=1_{X}$. A fiber-preserving homotopy from $h\circ g$
	to the identity $1_X$ can be defined similarly.
\end{proof}

\section{Margulis lemma on Alexandrov spaces}
\subsection{Proof of Theorem \ref{thmb-margulis}}

Let $X$ be a locally complete Alexandrov $n$-space of curv $\ge -1$. Let $B_1(p)$ be the $1$-ball centered at some point $p\in X$.

It is well-known that sufficient away from where $X$ is non-complete, the global Toponogov comparison on Alexandrov space holds (\cite{BGP92}, \cite{Pl96}). 
To be precisely, there is a constant $C_T$, such that Toponogov comparison holds for any triangle in $B_{\frac{1}{C_T}}(p)$, provided that $B_1(p)$ is relative compact. According to the proof of global Toponogov theorem in \cite{LS12} or \cite{Wa18}, it is enough to choose $C_T=100$.

Note that in a locally complete local Alexandrov $n$-space of curv $\ge -1$, the convex hull of a triangle may not be bounded.
However, by the proof of Toponogov comparison (cf. \cite{LS12}, \cite{Wa18}), any contradicting triangle can be reduced successively to other ones, whose perimeters decay in a definite ratio to form a converging geometric progression, such that a contradiction can be derived in a neighborhood of the initial triangle whose radius is not more than $12$-times of the initial perimeter.

Due to the above discussion, the fundamental facts on a complete Alexandrov space will be freely applied locally in this section without further mention.

We first reduce Theorem \ref{thmb-margulis} to the following special case. For any $p\in X$ and $q\in B_1(p)$, $0<r\le 1-|pq|$, let $\Gamma_{p}(q;r)$ be the subgroup of the fundamental group $\pi_1(B_1(p),q)$ generated by loops at $q$ lying in $B_r(q)$. As before, we will use $d(p,q)$ or $|pq|$ to denote the distance between two points.

\begin{theorem}\label{thm-good-point}
	Suppose that $B_1(p)$ is relative compact in $X$. Then there are positive constants $\epsilon(n),w(n)>0$, both depending only on the dimension $n$, such that there is a ``good'' point $q\in B_{\frac{1}{2C_T}}(p)$ satisfying $\Gamma_p(q;\epsilon(n))$ is $w(n)$-nilpotent. 
\end{theorem}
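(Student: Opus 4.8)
The plan is to run a contradiction-and-blow-up argument in the spirit of Kapovitch--Petrunin--Tuschmann, producing at the end a ``good'' point $q$ by iterating a ``find a nearby point at which the local fundamental group is controlled at the next scale'' step. Suppose no such $\epsilon(n), w(n)$ exist. Then there is a sequence of pointed Alexandrov $n$-spaces $(X_i, p_i)$ with $\operatorname{curv} \ge -1$, $B_1(p_i)$ relatively compact, and $\epsilon_i \to 0$, such that for \emph{every} $q \in B_{1/(2C_T)}(p_i)$ the group $\Gamma_{p_i}(q;\epsilon_i)$ fails to be $w$-nilpotent for all $w \le w(n)$ (where eventually $w(n)$ is the packing-type constant from the last step). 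Passing to a subsequence, $(X_i, p_i) \to (X_\infty, p_\infty)$ in the pointed Gromov--Hausdorff sense, and we work at the point $p_\infty$ and its local structure: the tangent cone $T_{p_\infty}X_\infty$ (a metric cone over a space of directions of dimension $\le n-1$) records the first blow-up. The iterated blow-up produces, after at most $n$ steps, a filtration of the relevant local group $G = G_1 \vartriangleright G_2 \vartriangleright \cdots \vartriangleright G_l = \{e\}$ with $l \le n$.

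First I would set up the \emph{local fundamental groups at varying collapsing scales} precisely (as in Definition~\ref{def-local-group}): for a point $q$ and scales $r \gg s > 0$, consider the image in $\pi_1(B_r(q))$ of loops lying in $B_s(q)$, and track how these groups behave as the space is rescaled by $1/s$ and one passes to a Gromov--Hausdorff limit $Y$ of the rescaled spaces. The key structural input is that when $Y$ has strictly positive $\delta$-strained radius (equivalently, is locally almost Euclidean), Theorem~\ref{thm:Lipschitz-submersion} supplies a Hurewicz fibration $f: X_i \to Y$ (for $i$ large), hence the long exact homotopy sequence \eqref{long-exact-seq}. This is exactly what is needed to see that the ``next-scale'' subgroup $G_{i+1}$ is normal in $G_i$: it is realized as (an image of) the fundamental group of a fiber, which sits as the kernel of $f_*$ at the level of $\pi_1$. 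Each successive rescaling strictly increases the dimension that the tangent cone splits off as a Euclidean factor (or we are already in the almost-Euclidean regime), which caps the number of steps at $n$ and gives $l \le n$. The abelianness of the quotients $G_i/G_{i+1}$ up to bounded index — the ``$c$-abelian'' condition — comes from the fact that at each stage the limit group acts on a space that, after splitting, looks like $\mathbb{R}^k$ times something, and the relevant subquotient is essentially a crystallographic-type group, so Bieberbach's theorem (or its Alexandrov analogue via \cite{FY92}, \cite{Ya96}) bounds the index of its translational abelian subgroup.

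The remaining structural claims — $G_i \vartriangleleft G_1$ for all $i$, and that the conjugation representation $\rho_i: G_1 \to \operatorname{Out}(G_i/G_{i+1})$ has image of order $\le C(n)$ — is where the \emph{gradient push} (Theorem~\ref{thm-gradientpush-1}) enters, and this is the step I expect to be the main obstacle. The point is that normality and the outer-action bound require moving loops based at $q$ around the space and back to $q$ along a controlled homotopy, \emph{without} the base-point path wandering into a proper extremal subset (from which no gradient curve escapes) and within a \emph{universal} time bound. The universal time bound $T(n) \le n^2\delta^{-1}$ from Theorem~\ref{thm-gradientpush-1}, together with the fact (proven in the appendix) that the pushing broken line between regular points stays among regular/$(n,\delta)$-strained points, is precisely what converts ``conjugation by $g \in G_1$'' into a uniformly bounded geometric operation, so that only boundedly many distinct outer actions can occur; a compactness/finiteness argument on the space of possible gradient-push configurations then gives the order bound $C(n)$. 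The delicate part is bookkeeping the base points: one must choose $q \in B_{1/(2C_T)}(p)$ once and for all so that the gradient strainer $\{a_j, b_j\}$ and pushing center $q_0$ from Theorem~\ref{thm-gradientpush-1} are valid simultaneously at every rescaling — this is what makes the fixed-base-point approach work and is flagged in the excerpt as the advantage over \cite{KW11}.

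Finally, once one has, for $i$ large, a nearby good point $q_i$ at which the local group across scales admits the KPT filtration, one concludes that $\Gamma_{p_i}(q_i; \epsilon)$ is $w(n)$-nilpotent for suitable $\epsilon$, contradicting the choice of the sequence; here $w(n)$ is produced by a compact-packing argument exactly as in \cite{KW11}, bounding the number of generators and hence the index in terms of $n$ alone. I would assemble the proof as: (1) reduce to the blow-up sequence; (2) identify the limit and its tangent structure, obtaining the chain of subgroups via iterated rescaling; (3) use Theorem~\ref{thm:Lipschitz-submersion} and its long exact sequence for the step-by-step normality $G_{i+1} \vartriangleleft G_i$; (4) use Theorem~\ref{thm-gradientpush-1} for the global normality $G_i \vartriangleleft G_1$ and the bound on $\#\rho_i(G_1)$; (5) invoke the packing argument for the uniform index $w(n)$; (6) derive the contradiction. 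The main technical burden, and the part requiring the appendix, is step~(4).
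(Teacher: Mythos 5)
Your plan matches the paper's argument essentially step for step: contradiction plus iterated rescaling to produce a chain of local fundamental groups across collapsing scales, the Hurewicz fibration of Theorem \ref{thm:Lipschitz-submersion} for the stepwise normality $G_{i+1}\vartriangleleft G_i$ (which feeds into the leveled-gap property), the gradient push of Theorem \ref{thm-gradientpush-1} for $G_i\vartriangleleft G_1$ and the bound on $\#\rho_i(G_1)$, and the Bieberbach theorem for $C$-abelian quotients. One small correction: in Theorem \ref{thm-good-point} the constant $w(n)$ comes directly from the group-theoretic Lemma 4.2.1 of \cite{KPT10} applied to the filtration (A)--(C), while the packing argument of \cite{KW11} (Lemma \ref{lem-packing-index}) is used afterwards, in the reduction of Theorem \ref{thmb-margulis} to the good-point case, to pass from $q$ back to an arbitrary base point $p$.
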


For general points in $X$, we need the following result in \cite{KW11}.

\begin{lemma}[{\cite[Step 2 in \S7] {KW11}}]\label{lem-finite-index-of-uniform-small}
	\label{lem-packing-index}
	For any positive integer $n$ and $0<\epsilon\le \epsilon(n)$ with $\epsilon(n)$ in Proposition \ref{thm-good-point}, there is $L(\epsilon,n)>0$ such that the following holds.
	
	Let $X$ be an Alexandrov $n$-space of curv $\ge -1$, and  $p\in X$ be a point such that $B_1(p)$ is relative compact in $X$. Let $\Gamma=\left<\beta_1,\cdots,\beta_k:d(\beta_ip,p)\le \frac{1}{100C_TL(\epsilon,n)}\right>$ be a discrete subgroup of isometries of $X$ that acts freely. Then the subgroup 
	$$H=\left<g\in\Gamma: d(gx,x)\le \epsilon, \forall x\in B_{\frac{1}{2C_T}}(p)\right>$$ has finite index 
	$[\Gamma:H]\le (2k+1)^{L(\epsilon,n)}$.
\end{lemma}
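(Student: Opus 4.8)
The plan is to fix $L=L(\epsilon,n)$ as a packing constant and then to prove that $\Gamma=H\cdot W$, where $W$ denotes the set of words of length $\le L$ in $\beta_1^{\pm 1},\dots,\beta_k^{\pm 1}$; since a word of length $\le L$ is coded by a length-$L$ string over the $2k$ signed generators together with a ``stop'' symbol, $|W|\le(2k+1)^{L}$, and the identity $\Gamma=H\cdot W$ forces $[\Gamma:H]\le|W|\le(2k+1)^{L}$.

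First I would fix the constant. Since $C_T=100$ and $B_1(p)$ is relatively compact, $\overline{B_{1/C_T}(p)}$ and $\overline{B_{2/C_T}(p)}$ are compact. Let $\mathcal M$ be the space of all distance-preserving maps $\overline{B_{1/C_T}(p)}\to\overline{B_{2/C_T}(p)}$ with the supremum distance. It is precompact (these maps are $1$-Lipschitz with values in a compact set), and by the Bishop--Gromov volume comparison for Alexandrov $n$-spaces with curv $\ge -1$, which yields a doubling bound depending only on $n$, the $\tfrac{\epsilon}{2}$-covering number of $\mathcal M$ is at most some $N(\epsilon,n)$ depending only on $\epsilon$ and $n$ (it suffices to prescribe a map on an $\eta$-net of the domain up to error $\eta$, with $\eta\sim\epsilon$, and each of the $\le C(n)\eta^{-n}$ net points has $\le C(n)\eta^{-n}$ admissible images). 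Set $L(\epsilon,n):=N(\epsilon,n)$. With this choice, the hypothesis $d(\beta_i p,p)\le\tfrac{1}{100C_TL}$ makes every product of at most $L$ generators move $p$ by at most $\tfrac{1}{100C_T}$.

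The core step is a shortening lemma: if $g=\beta_{i_1}^{e_1}\cdots\beta_{i_m}^{e_m}$ with $m>L$, then $g$ can be rewritten as $h\cdot g'$ with $h\in H$ and $g'$ a word of length $\le m-1$. I would prove it as follows. The prefixes $g_j=\beta_{i_1}^{e_1}\cdots\beta_{i_j}^{e_j}$ with $0\le j\le L$ move $p$ by $\le\tfrac{1}{100C_T}$, so their restrictions to $\overline{B_{1/C_T}(p)}$ all lie in $\mathcal M$; as $L+1>N(\epsilon,n)$, two of these restrictions fall into one $\tfrac{\epsilon}{2}$-ball of the net, so there are $0\le a<b\le L$ with $d(g_a x,g_b x)\le\epsilon$ for all $x\in\overline{B_{1/C_T}(p)}$. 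Put $\gamma=g_a^{-1}g_b=\beta_{i_{a+1}}^{e_{a+1}}\cdots\beta_{i_b}^{e_b}$. Applying the isometry $g_a^{-1}$ gives $d(\gamma z,z)=d(g_b z,g_a z)\le\epsilon$ for $z\in\overline{B_{1/C_T}(p)}$, so in particular $\gamma\in H$; and since $g_a$ moves $p$ by $\le\tfrac{1}{100C_T}$, for $y\in B_{1/(2C_T)}(p)$ the point $g_a^{-1}y$ still lies in $\overline{B_{1/C_T}(p)}$, whence $d(g_a\gamma g_a^{-1}y,y)=d(\gamma g_a^{-1}y,g_a^{-1}y)\le\epsilon$ and $g_a\gamma g_a^{-1}\in H$ as well. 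Finally $g=g_a\gamma\,\beta_{i_{b+1}}^{e_{b+1}}\cdots\beta_{i_m}^{e_m}=(g_a\gamma g_a^{-1})\cdot g'$ with $g'=\beta_{i_1}^{e_1}\cdots\beta_{i_a}^{e_a}\beta_{i_{b+1}}^{e_{b+1}}\cdots\beta_{i_m}^{e_m}$ a word of length $a+(m-b)\le m-1$. Iterating this step (the word length strictly drops, so it stops) writes every $g\in\Gamma$ as an element of $H$ times a word of length $\le L$; hence $\Gamma=H\cdot W$ and the index bound follows.

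I expect the main obstacle to be the non-normality of $H$: the element $\gamma$ delivered by the pigeonhole argument sits in the middle of the word for $g$, and pushing it to the front costs a conjugation by the prefix $g_a$, so one must be sure $g_a\gamma g_a^{-1}$ is still in $H$. This is precisely why the pigeonhole is run on the slightly enlarged ball $\overline{B_{1/C_T}(p)}$ rather than on $B_{1/(2C_T)}(p)$ from the definition of $H$: the surplus radius --- legitimate because $\tfrac{1}{2C_T}+\tfrac{1}{100C_T}<\tfrac{1}{C_T}<1$ when $C_T=100$ --- absorbs the displacement of $g_a$, which in turn is controlled by the hypothesis on the generators and the choice $L=N(\epsilon,n)$. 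The only other point requiring care is that $N(\epsilon,n)$ genuinely is independent of $X$ and $p$, which rests on the uniform doubling property of Alexandrov spaces with curvature bounded below; freeness and discreteness of the action are not needed beyond the standing setup.
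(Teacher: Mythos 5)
Your proof is correct and takes essentially the same approach as the paper (which itself cites KW11, \S 7, Step 2): choose a packing constant $L(\epsilon,n)$ via Bishop--Gromov relative volume comparison, then run a pigeonhole/word-shortening argument to show $\Gamma = H\cdot W$ with $W$ the set of words of length $\le L$. Your explicit treatment of the non-normality of $H$ --- conjugating the pigeonhole element $\gamma$ through the prefix $g_a$ and using the enlarged ball $\overline{B_{1/C_T}(p)}$ to absorb $g_a$'s displacement --- is exactly the content of the paper's compressed ``wordlength-cutting-off argument,'' carried out in full.
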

Note that for any isometry $\gamma$ of $X$ which moves $p$ not farther than $\frac{1}{100C_T}$ but a point in $B_{\frac{2}{3C_T}}(p)$ farther than $\epsilon$, $\gamma$ should move a point in any maximal $\frac{\epsilon}{4}$-net of $B_{\frac{2}{3C_T}}(p)$ farther than $\frac{\epsilon}{4}$. Thus the total possibility of such isometries can be reduced to permutations of lattice, whose total number is under control by the relative volume comparison (see \cite{LiRong12}). By considering the naturally extended action of $\Gamma$ on the $m$ times direct product space by $X$ itself, the total number of cosets of $H$ can be counting via a wordlength-cutting-off argument with $$L(\epsilon,n)=\frac{(\operatorname{vol} B_{-1}^n(\frac{3}{4C_T}))^{m(\epsilon,n)}}{\operatorname{vol} B_{-1}^{m(\epsilon,n)n}(\frac{\epsilon}{8})}, \qquad \text{where}\quad m(\epsilon,n)=\frac{\operatorname{vol} B_{-1}^n(\frac{2}{3C_T})}{\operatorname{vol} B_{-1}^n(\frac{\epsilon}{8})},$$ 
and $B_{-1}(r)$ denotes a ball in the Hyperbolic space $\mathbb H^n$. For details, see \cite[\S 7]{KW11}.

Assuming Theorem \ref{thm-good-point}, we now prove Theorem \ref{thmb-margulis}.

\vspace{2mm}
\begin{proof}[Proof of Theorem \ref{thmb-margulis}]
	~
	
	Let $(\tilde X,\tilde p)\to (B_1(p),p)$ be the universal cover of $B_1(p)$.
	Now we take $\epsilon(n)$ and $q$ to be the constant and a corresponding ``good'' point given by Theorem \ref{thm-good-point}. Let $$H=\left<g\in \pi_1(B_1(p),p):d(gx,x)\le \epsilon(n) \text{ for any $x\in B_{\frac{1}{2C_T}}(\tilde p)$}\right>.$$
	
	Since $q\in B_{\frac{1}{2C_T}}(p)$, $H$ can be viewed as a subgroup of $\Gamma_p(q;\epsilon(n))$, hence $H$ is $w(n)$-nilpotent.
	
	Since for some $0<\delta\le \frac{1}{2C_T}$, $B_{2\delta}(p)$ is locally contractible, any loop lying in $B_{\delta}(p)$ at $p$ is homotopic to a joining of loops not longer than $3\delta$ at $p$. By a standard argument of Gromov's short basis, the generating set of $\Gamma_p(p;\delta)$ can be chosen to have at most $k(n)$ elements. By Lemma \ref{lem-packing-index}, let $\delta(n)=\frac{1}{300C_TL(\epsilon(n),n)}$, then for any $0<\delta\le \delta(n)$, $[\Gamma_p(p;\delta):\Gamma_p(p;\delta)\cap H]\le c(n)$.
	
	Therefore, $\Gamma_p(p;\delta)\cap H$ is a subgroup of $H$, which is $w(n)$-nilpotent and has finite index $\le c(n)$ in $\Gamma_p(p;\delta)$, we derive that $\Gamma_p(p;\delta)$ itself is $w'(n)$-nilpotent.
\end{proof}

What remains in this paper is devoted to prove Theorem \ref{thm-good-point}. We will argue by contradiction. Assuming the contrary, then there is a sequence $(X_\alpha,p_\alpha)$ of Alexandrov $n$-spaces with curv $\ge -1$, such that for any $q_\alpha\in B_{\frac{1}{2C_T}}(p_\alpha)$, $\Gamma_{p_\alpha}(q_\alpha;\alpha^{-1})$ fails to be $w(n)$-nilpotent.

By passing to a subsequence, we may assume that $(X_\alpha, p_\alpha)\overset{GH}{\longrightarrow}(X_\infty,p_\infty)$, i.e., $(X_\alpha, p_\alpha)$ Gromov-Hausdorff converges to a limit space $(X_\infty,p_\infty)$.
Following \cite{KPT10}, we will show in the remaining sections that:

\begin{claim}\label{claim-good-point}
By passing to a subsequence, there is $0<R_1\le \frac{1}{16C_T}$ such that for each sufficient large $\alpha$, there is a point $q_\alpha\in B_{\frac{1}{2C_T}}(p_\alpha)$ and a chain of subgroups 
$$G_{1,\alpha}=\Gamma_{p_\alpha}(q_\alpha;R_1)\vartriangleright G_{2,\alpha} \vartriangleright\cdots \vartriangleright G_{l,\alpha}=\{e\} \qquad (l\le n)$$
satisfying
\begin{enumerate}
	\item[(A)] $G_{i,\alpha}/G_{i+1,\alpha}$ is $C$-abelian;
	\item[(B)] $G_{i,\alpha}\vartriangleleft G_{1,\alpha}$;
	\item[(C)] By (A) and (B), $G_{1,\alpha}$ acts on $G_{i,\alpha}/G_{i+1,\alpha}$ by conjugation, which induces a homomorphism $\rho_{i,\alpha}:G_{1,\alpha}\to \operatorname{Out}(G_{i,\alpha}/G_{i+1,\alpha})$. The image $\rho_{i,\alpha}(G_{1,\alpha})$ has finite elements, $\#\rho_{i,\alpha}(G_{1,\alpha})< N_0$.
\end{enumerate}	
\end{claim}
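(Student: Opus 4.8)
The plan is to follow the blowing-up scheme of Kapovitch–Petrunin–Tuschmann \cite{KPT10}, adapted to the Alexandrov setting, and to extract the filtration in Claim \ref{claim-good-point} by an induction on the (Hausdorff) dimension of a tower of equivariant Gromov–Hausdorff limits. Fix the sequence $(X_\alpha,p_\alpha)$ for which Theorem \ref{thm-good-point} fails; passing to subsequences, $(X_\alpha,p_\alpha)\overset{GH}{\to}(X_\infty,p_\infty)$. Working in the universal cover $(\tilde X_\alpha,\tilde q_\alpha)$ of a ball and rescaling, one studies the pseudo-group generated by short loops at a well-chosen base point $q_\alpha$ at the collapsing scale, and passes to the equivariant limit. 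The first step is to set up the \emph{local fundamental groups at different scales} (Definition \ref{def-local-group}) and to choose, at each stage, a base point $q_\alpha\in B_{1/(2C_T)}(p_\alpha)$ and a scale so that the rescaled limit $(Y,\hat q)$ has strictly larger dimension than the limit at the previous scale — this is exactly the point where the gradient push (Theorem \ref{thm-gradientpush-1}) is used, to guarantee that a \emph{fixed} base point remains valid at every scale (the remark after Theorem \ref{thm-gradientpush-1} is essential here) and that the relevant short loops can be transported without crossing extremal subsets.

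The inductive engine is as follows. At the top scale, $G_{1,\alpha}=\Gamma_{p_\alpha}(q_\alpha;R_1)$ for a radius $R_1\le 1/(16C_T)$ to be fixed; its equivariant limit acts on a limit Alexandrov space $Y_1$ of dimension $<n$. By the structure of limit groups of isometries on Alexandrov spaces (Fukaya–Yamaguchi style, together with Yamaguchi's fibration theorem \cite{Ya96}), the identity component of the closure of the limit group is a nilpotent Lie group, and the short-loop subgroup at the next finer scale maps, under the fibration $f:\tilde X_\alpha\to Y_1$ of Theorem \ref{thm:Lipschitz-submersion}, into the fiber; this produces the candidate for $G_{2,\alpha}$ as the image of $\pi_1$ of a nearby fiber inside $G_{1,\alpha}$. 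The normality $G_{2,\alpha}\vartriangleleft G_{1,\alpha}$ and property (A) ($G_{i,\alpha}/G_{i+1,\alpha}$ is $C$-abelian) come from the fact that $G_{1,\alpha}/G_{2,\alpha}$ is, up to bounded index, a subgroup of the limit isometry group acting cocompactly on $Y_1$, hence virtually abelian with controlled constant by the Bieberbach-type bounds; iterating on the fiber $F$, whose dimension has dropped, yields the chain $G_{1,\alpha}\vartriangleright G_{2,\alpha}\vartriangleright\cdots\vartriangleright G_{l,\alpha}=\{e\}$ with $l\le n$ since the dimension strictly decreases at each step. Property (B), i.e.\ $G_{i,\alpha}\vartriangleleft G_{1,\alpha}$ for \emph{all} $i$ (not just consecutive ones), and the uniform bound $\#\rho_{i,\alpha}(G_{1,\alpha})<N_0$ in (C), are obtained from the universal pushing time bound $T(n)$ in Theorem \ref{thm-gradientpush-1}: conjugation by an element of $G_{1,\alpha}$ is realized by transporting a based loop along a gradient-push path of length $\le T(n)$, which changes its homotopy class only within a controlled subset, so the conjugation action on $G_{i,\alpha}/G_{i+1,\alpha}$ factors through a group of permutations of a uniformly bounded set (a net of the relevant fiber), giving the bound $N_0=N_0(n)$.

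The key mechanism to make the induction run is a contradiction/compactness argument for the choice of scales: if at some stage no finer scale produced a dimension drop, then the local fundamental groups would stabilize, and a packing argument (as in \cite{KW11}, reproduced around Lemma \ref{lem-packing-index}) together with the already-constructed partial filtration would force $\Gamma_{p_\alpha}(q_\alpha;\epsilon)$ to be $w(n)$-nilpotent for the final $\epsilon$, contradicting the standing assumption; hence the process terminates after $\le n$ steps with the full chain, and one reads off $R_1$ as the coarsest scale used. I expect the main obstacle to be precisely the verification of (B) and the uniform bound in (C) — that is, showing the conjugation action of the \emph{whole} group $G_{1,\alpha}$ (not merely nearby elements) on each subquotient is finite with a bound independent of $\alpha$. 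This requires turning an arbitrary conjugation into a bounded composition of gradient pushes and lifts, controlling how a short loop at $q_\alpha$ is moved when its basepoint is dragged by a long element of $G_{1,\alpha}$, and checking the path stays off extremal subsets; this is exactly what the appendix's refined construction of the gradient push (with time bound $n^2\delta^{-1}$ and the passage through $k$-regular or $(n,\delta)$-strained points) is designed to supply, so the proof will hinge on feeding that construction into the equivariant limit carefully.
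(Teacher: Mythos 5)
Your plan follows the paper's blueprint closely: local fundamental groups at iterated collapsing scales (Definition~\ref{def-local-group}), the Hurewicz fibration of Theorem~\ref{thm:Lipschitz-submersion} for consecutive normality, the generalized Bieberbach theorem for (A), and the universal gradient-push time bound of Theorem~\ref{thm-gradientpush-1} for (B) and the uniform bound in (C), with termination after at most $n$ steps. You correctly identify the crux — converting conjugation by an arbitrary element of $G_{1,\alpha}$ into a bounded gradient push — and the paper's Lemma~\ref{lem-conj-lip-control} is the precise form of that step. To carry it out one needs two ingredients that your sketch only gestures at: an induction on $i$ in which $G_{i,\alpha}\vartriangleleft G_{1,\alpha}$ is assumed, so that the normal cover with $\pi_1=G_{i,\alpha}$ exists and the gradient-push path and its lift can be compared; and the \emph{leveled gap property} (Definition~\ref{def-leveled-gap}), which guarantees that after the $e^{2\frac{\cosh 2}{\sinh 2}(2T(n)+C(n))}$-distortion of gradient pushing the pushed loop is still short enough (by Lemma~\ref{lem-keep-gap}) to belong to $G_{i,\alpha}$ — without this quantitative gap between the scales $r_i$ and $R_i$, the distortion bound alone would not close the argument. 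A minor difference: the paper does not need your contradiction/compactness argument for termination; the scales $R_{i,\alpha}$ and the point $q_\alpha$ are produced constructively by building a $\delta^2$-maximal $n$-frame level by level (Proposition~\ref{prop-existence-local-group} together with Lemma~\ref{lem-maximal-frame-non-collapse}), so the process halts simply because the frame directions total $n$.
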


Then by \cite[Lemma 4.2.1]{KPT10}, we derive that $G_{1,\alpha}$ is $w(C, N_0)$-nilpotent, a contradiction.

In order to construct each $G_{i,\alpha}$, we define the local fundamental groups (Definition \ref{def-leveled-gap} below). Then (B) and (C) would follow from the leveled gap property (Definition \ref{subsec-local-group}) and a universal estimate of gradient push associated to a $\delta^2$-maximal frame (Definition \ref{def-maximal-frame}); see Section \ref{sec-B-C-prime}. (A) will be guaranteed by the construction and the generalized Bieberbach theorem (\cite{FY92}, cf. \cite{Ya96}); see Proposition \ref{prop-existence-local-group}.

\subsection{Local fundamental group and numerical maximal frame}\label{subsec-local-group}

We first introduce the local fundamental group that will realize $G_{i,\alpha}$.

\begin{definition}\label{def-local-group}
	Let $X$ be a locally complete Alexandrov $n$-space with curv $\ge -1$. Let $p$ be a point in $X$ such that the metric ball $B_1(p)$ is relative compact in $X$. For $0<r\le \frac{1}{2}$, the \emph{$r$-local fundamental group} $\pi_1^L(p;r)$ at $p$ is defined to be
	$$\pi_1^L(p;r)=\left< \text{loop $\gamma$ at $p$}:\operatorname{im}\gamma \in B_{r}(p)\right>/\sim,$$
	where $\gamma_1\sim \gamma_2$ if they are homotopic in $B_{2r}(p)$.	
\end{definition}
For $r_1>r_2>0$, let $\imath:\pi_1^L(p;r_2)\to \pi_1^L(p;r_1)$ be the inclusion homomorphism. A key property used in proving (B) and (C) is certain ``leveled gap'' between local fundamental groups at different scales as follow.
\begin{definition}\label{def-leveled-gap}
	We say that $\pi_1^L(p;R_1)$ satisfies \emph{$(\epsilon,\sigma,l)$-leveled gap property}, if there is a sequence of intervals $[r_l=0,R_l]$, $\cdots$, $[r_1,R_1]$ such that
	\begin{enumerate}\numberwithin{enumi}{theorem}
		\item $r_{i}\le \epsilon R_i$, and $r_i/R_{i+1}\le \sigma$,
		\item $\imath:\pi_1^L(p;r_i)\to \pi_1^L(p;R_i)$ is an isomorphism,
		\item $\imath(\pi_1^L(p;R_{i+1}))\vartriangleleft  \pi_1^L(p;r_i)$.
	\end{enumerate}		
\end{definition}

In practice, $r_i=3\diam Y_{i+1}$, where $Y_{i+1}$ is a ``regular fiber'' at $i$-level, which by definition, is a level set of $F_{k_1+\cdots+k_i}=(\dist_{a_1},\cdots,\dist_{a_{k_1+\cdots+k_i}})$, where $a_j$ are from a maximal $(k_1+\cdots+k_i)$-frame (for definition see below), $\dist_{a_j}$ is the distance function to $a_j$, and $2R_i$ is the radius of a Perelman's fibration $F_{k_1+\cdots+k_i}$'s base disk around a regular point in a limit space.

Secondly, we introduce a $\delta^2$-maximal frame. Let $X$ be an Alexandrov $n$-space and let $k$ be a positive integer $\le n$. Let $0<\delta\le \frac{1}{10^2}$. By \cite{BGP92}, given a pair of points $(a_1,b_1)$ and a minimal geodesic segment $[a_1b_1]$ between them, a $k$-frame $\{[a_ib_i]\}_{i=1}^k$, which consists of $k$ minimal geodesic segment $[a_ib_i]$, can be built up successively (and non-uniquely) on $X$: Assuming $[a_{i-1}b_{i-1}]$ is well-defined, then take $[a_ib_i]$ on $X$ that satisfies the following
\numberwithin{enumi}{theorem}
\addtocounter{theorem}{1}
\begin{enumerate}
	\item $b_i$ is the middle point of the geodesic segment $[a_{i-1}b_{i-1}]$,
	\item $|a_ia_{j}|=|b_ia_{j}|$, for all $1\le j\le i-1$,
	\item\label{delta-col} the edge $[a_ib_i]$ is \emph{$\delta^2$-collapsed}, i.e., $|a_{i}b_{i}|\le \delta^2|a_{i-1}b_{i-1}|$.
\end{enumerate}

A little more generally, we will consider $k$-frames where $b_i$ is not far away from the middle point $m_{i-1}$ of $[a_{i-1}b_{i-1}]$. Let $\{[a_ib_i]\}_{i=1}^k$ be a $k$-frame. Let $F_k=(\dist_{a_1},\cdots,\dist_{a_k}):X\to \mathbb R^k$.
A new pair $(a_{k+1},b_{k+1})$ is called \emph{$\delta^2$-maximal} relative to a $k$-frame $\{[a_ib_i]\}_{i=1}^k$ if 
\addtocounter{theorem}{1}
\begin{enumerate}
	\item $b_{k+1}$ is $\frac{\delta}{100}|a_kb_k|$-close to the middle point $m_k$ of $[a_kb_k]$,
	\item $F_{k}(a_{k+1})=F_k(b_{k+1})$,
	\item\label{cond-num-maximal} $|a_{k+1}b_{k+1}|= d_{k+1}$, where $$d_{k+1}=\max\{|b_{k+1}x|: x\in F_k^{-1}(F_k(b_{k+1})) \text{ and } |x b_{k+1}|\le  \delta^2\min_{i=1,\dots, k}|a_{i}b_{i}|\}.$$
\end{enumerate} 
Note that by (\ref{cond-num-maximal}), one always has $|a_{k+1}b_{k+1}|\le  \delta^2\min_{i=1,\dots, k}|a_{i}b_{i}|$.

\begin{definition}\label{def-maximal-frame}
	A $k$-frame is called $\delta^2$-\emph{maximal} if for each $2\le i\le k$, $(a_i,b_i)$ is $\delta^2$-maximal relative to the $(i-1)$-frame $\{[a_{j}b_{j}]\}_{j=1}^{i-1}$. For an $\delta^2$-maximal $n$-frame $\{[a_{j}b_{j}]\}_{j=1}^{n}$, we say that $\{[a_{j}b_{j}]\}_{j=1}^{n}$ is \emph{centered} at $x\in X$, if the point $x$ is $\frac{\delta}{100}|a_nb_n|$-close to $m_n$.
\end{definition}

By the construction above, Theorem \ref{thm-gradientpush-1} is reduced to a universal estimate of gradient push associated to a $\delta^2$-maximal frame; see Theorem \ref{thm-gradientpush} in the appendix.

The following fact on the gradient flow of $\lambda$-concave functions on Alexandrov space is applied in proving Theorem \ref{thm-good-point}.
\begin{theorem}[\cite{Petr07}]\label{thm-gradient-lip}
	Let $\Phi_t$ be the gradient flow of a $\lambda$-concave function on a complete Alexandrov space. Then $\Phi_t:X\to X$ is $e^{\lambda t}$-Lipschitz.
\end{theorem}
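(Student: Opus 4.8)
\textbf{The final statement} to prove is \autoref{thm-gradient-lip}: the gradient flow $\Phi_t$ of a $\lambda$-concave function $f$ on a complete Alexandrov space is $e^{\lambda t}$-Lipschitz. Here is the plan.

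The plan is to control the rate of change of the distance between two gradient trajectories. First I would recall the first-variation-type inequality for gradient curves: if $\alpha(t)=\Phi_t(x)$ and $\beta(t)=\Phi_t(y)$ are two gradient curves, then for a.e.\ $t$ the upper right derivative of $g(t):=\dist(\alpha(t),\beta(t))$ satisfies
\[
(g^2)^{+\prime}(t)\ \le\ -2\,\bigl\langle \nabla_{\alpha(t)}f,\ \uparrow_{\alpha(t)}^{\beta(t)}\bigr\rangle\,g(t)\ -\ 2\,\bigl\langle \nabla_{\beta(t)}f,\ \uparrow_{\beta(t)}^{\alpha(t)}\bigr\rangle\,g(t),
\]
where $\uparrow_{p}^{q}$ denotes the direction at $p$ of a minimal geodesic toward $q$; this uses that $t\mapsto \alpha(t)$ has derivative $\nabla_{\alpha(t)}f$ and the formula for the derivative of a distance function along a curve in terms of the angle with a shortest geodesic. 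The key point is then to bound each inner product term from below by $-\tfrac{\lambda}{2} g(t)$ (so the sum is $\ge -\lambda g(t)^2$ times $2$, giving $(g^2)^{+\prime}\le 2\lambda g^2$, hence $g(t)\le e^{\lambda t} g(0)$ by Gronwall).

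The main step — and the expected main obstacle — is the inequality
\[
\bigl\langle \nabla_p f,\ \uparrow_p^{q}\bigr\rangle\ +\ \bigl\langle \nabla_q f,\ \uparrow_q^{p}\bigr\rangle\ \ge\ -\lambda\,\dist(p,q).
\]
This is where $\lambda$-concavity enters. I would prove it by restricting $f$ to a minimal geodesic $\gamma:[0,\ell]\to X$ from $p$ to $q$ (parametrized by arclength, $\ell=\dist(p,q)$): the function $h(s)=f(\gamma(s))$ is $\lambda$-concave on $[0,\ell]$, so $h'(0)\ge h'(\ell)-\lambda\ell$ (comparing one-sided derivatives of a $\lambda$-concave function of one variable, i.e.\ $h(s)-\tfrac{\lambda}{2}s^2$ is concave). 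The remaining ingredient is the relation between the one-sided derivatives of $f$ along $\gamma$ and the gradient: by definition of the gradient on an Alexandrov space, $d_p f(\xi)=\langle \nabla_p f,\xi\rangle$ for every direction $\xi$ when $\nabla_p f$ is realized, and in general $d_pf(\uparrow_p^q)\le \langle\nabla_pf,\uparrow_p^q\rangle$ because the gradient is the element best-aligned with the differential; more precisely $h'(0^+)=d_pf(\uparrow_p^q)\le\langle\nabla_pf,\uparrow_p^q\rangle$ and $-h'(\ell^-)=d_qf(\uparrow_q^p)\le\langle\nabla_qf,\uparrow_q^p\rangle$. Combining, $\langle\nabla_pf,\uparrow_p^q\rangle+\langle\nabla_qf,\uparrow_q^p\rangle\ge h'(0^+)-h'(\ell^-)\ge -\lambda\ell$, which is exactly what is needed.

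Finally I would assemble the pieces: plug the pointwise angle bound into the derivative estimate for $g(t)^2$, handle the technical point that $g$ is locally Lipschitz (gradient curves are $1$-Lipschitz when $f$ is, or more generally have speed bounded on compact time intervals) so that $g^2$ is differentiable a.e.\ and absolutely continuous, and then integrate the differential inequality $(g^2)'\le 2\lambda g^2$ to obtain $g(t)^2\le e^{2\lambda t}g(0)^2$, i.e.\ $\dist(\Phi_t x,\Phi_t y)\le e^{\lambda t}\dist(x,y)$. A subtlety to address is the case where the minimal geodesic from $\alpha(t)$ to $\beta(t)$ is non-unique or varies discontinuously in $t$; this is handled by working with upper-right Dini derivatives and choosing, at each $t$, one shortest geodesic, which suffices for the Gronwall-type argument since the differential inequality need only hold for the upper derivative. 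This is essentially the argument in \cite{Petr07}, so I would cite it for the detailed verification of the first-variation formula for gradient curves.
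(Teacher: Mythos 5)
The paper does not prove this statement; it is quoted verbatim as a known result with the citation \cite{Petr07}, and no argument is given. So there is no ``paper's own proof'' to compare against.

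Your blind proof is a correct rendition of the standard argument from Petrunin's survey. The three ingredients are all right: (i) the first-variation estimate $\dfrac{d^{+}}{dt}\dist(\alpha(t),\beta(t))\le -\langle\nabla_{\alpha}f,\uparrow_{\alpha}^{\beta}\rangle-\langle\nabla_{\beta}f,\uparrow_{\beta}^{\alpha}\rangle$, which is the hinge form of Toponogov's comparison applied to a curve with a well-defined right derivative $\alpha^{+}=\nabla_{\alpha}f$ (so it does extend from geodesics to gradient curves); (ii) the conversion $d_pf(\xi)\le\langle\nabla_pf,\xi\rangle$, which is exactly one of the two defining properties of the Alexandrov-space gradient of a semiconcave function; and (iii) the one-variable $\lambda$-concavity of $s\mapsto f(\gamma(s))$ on a minimal geodesic to get $h^{\prime}(0^{+})-h^{\prime}(\ell^{-})\ge-\lambda\ell$. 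Chaining these gives $g^{+\prime}\le\lambda g$, hence $g(t)\le e^{\lambda t}g(0)$ by Gronwall, which is what you did. Your cautionary remarks about working with upper Dini derivatives (rather than pointwise derivatives) and about the non-uniqueness and discontinuity of the choice of minimal geodesic $[\alpha(t)\beta(t)]$ are the right technical points to flag; since $g$ is locally Lipschitz, the Dini-derivative comparison does integrate. Two small presentational notes: you do not need ``for a.e.\ $t$'' once you phrase the estimate with the upper right Dini derivative for all $t$, and it is cleaner to estimate $g^{+\prime}$ directly rather than $(g^2)^{+\prime}$, though both routes are fine. Your attribution to \cite{Petr07} is consistent with the paper's citation.
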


\begin{remark}
	We remark that all results on gradient push with respect to a $\delta^2$-maximal frame also hold for  a $(n,\delta)$-strainer with suitable maximum property. We only use maximal frames in this paper for simplicity.
\end{remark}
\section{Proofs of Claims (B) and (C)}\label{sec-B-C-prime}
We now prove that the existence of $(\epsilon,\sigma,l)$-leveled gap property and a $\delta^2$-maximal frame centered at $p$ would implies (B) and (C) hold for $G_i=\imath\pi_1^L(q;R_i)$ with $q\in B_\frac{1}{2C_T}(p)$.

Throughout this subsection, we always assume that $X$ is a locally complete Alexandrov $n$-space with curv $\ge -1$ such that the metric ball $B_1(p)$ is relative compact in $X$.

Let $q\in B_\frac{1}{2C_T}(p)$, $0<R_1\le \frac{1}{16C_T}$. Let $\pi_1^L(q;R_i)$ be a local fundamental group satisfying  the $(\epsilon,\sigma,l)$-leveled gap property. 
Let $G_i=\imath\pi_1^L(q;R_i)$ for each $1\le i\le l$. Then by the proofs in \cite{KPT10}, (B) and (C) hold for $G_i$. We give a proof for completeness.

\begin{proposition}[\cite{KPT10}]\label{prop-B-C-for-local-group}
	For $0<R_1\le \frac{1}{16C_T}$ and $\sigma>0$, there is $\epsilon(n)>0$ such that for $0<\epsilon<\epsilon(n)$, any local fundamental group $\pi_1^L(q;R_i)$ with $(\epsilon,\sigma,l)$-leveled gap property for intervals $[r_l=0,R_l]$, $\cdots$, $[r_1,R_1]$, if there is a $\delta^2$-maximal frame $\{[a_jb_j]\}_{j=1}^n$ centered at $q$ such that $$|a_1b_1|=\min \{\frac{1}{2C_T},\frac{1}{2}\diam X\},$$ then the chain of groups $G_i=\imath\pi_1^L(q;R_i)$, namely
	$$G_1\vartriangleright G_2\vartriangleright\cdots \vartriangleright G_l=\{e\},$$
	satisfies (B) and (C). 
\end{proposition}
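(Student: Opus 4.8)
The plan is to unwind the geometric meaning of the $(\epsilon,\sigma,l)$-leveled gap property and to exploit the universal time bound for the gradient push associated to a $\delta^2$-maximal frame. First I would fix notation: write $\Pi_i = \pi_1^L(q;R_i)$, $\pi_i = \pi_1^L(q;r_i)$, let $\imath_i:\pi_i \to \Pi_i$ be the isomorphism of Definition \ref{def-leveled-gap}(2), and $G_i = \imath\,\Pi_i \le G_1 = \Gamma_q(q;R_1)$ viewed inside $\pi_1(B_1(q),q)$; the lifts to $(\tilde X,\tilde q)\to(B_1(q),q)$ turn elements of $G_i$ into deck transformations that displace $\tilde q$ by at most $2R_i$. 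Property (3) of the leveled gap says exactly that $\imath(\Pi_{i+1})$ is normal in $\pi_i$, and since $\imath_i$ is an isomorphism, $G_{i+1} = \imath\,\Pi_{i+1}$ is normal in $G_i = \imath_i(\pi_i)$. That gives the weak normality $G_{i+1}\vartriangleleft G_i$ for free; the content of (B) is the stronger $G_{i}\vartriangleleft G_1$, and of (C) the finiteness of the outer-action image.

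The key mechanism for (B) and (C) is conjugation control: for $g\in G_1$ and $h\in G_{i+1}$ I must show $g h g^{-1}\in G_{i+1}$, and more, that only finitely many outer automorphisms of $G_i/G_{i+1}$ arise. The plan is to realize conjugation by $g$ on the deck group as the effect of the gradient push. Given $g\in G_1$, represent it by a loop of length $\le 2R_1 \le \frac{1}{8C_T}$ at $q$; lifting, $g$ moves $\tilde q$ to a point $\tilde q'$ with $|\tilde q \tilde q'|\le 2R_1$. Apply Theorem \ref{thm-gradientpush-1} (in the sharpened $\delta^2$-maximal-frame form, Theorem \ref{thm-gradientpush}, with the frame centered at $q$ and $|a_1b_1| = \min\{\tfrac{1}{2C_T},\tfrac12\diam X\}$): there is a successive gradient flow $\Psi$ of the functions $\tfrac12\dist^2$ to the frame points and to the center, defined in total time $T\le n^2\delta^{-1}$, pushing $\tilde q'$ back to $\tilde q$. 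Because each of these flows is the gradient flow of a $\lambda$-concave function with $\lambda$ a universal constant, Theorem \ref{thm-gradient-lip} gives that the composite $\Psi$ is $e^{\lambda T}$-Lipschitz, with $\lambda T \le C(n)$ a dimensional constant. Now conjugation by $g$ corresponds, after this canonical pushing, to an automorphism of the local fundamental group realized by a homotopy whose tracks have length controlled by $e^{C(n)}$ times the original displacements; in particular a loop representing $h\in G_{i+1}$, of length $\le 2r_i\le 2\epsilon R_i$, is carried to a loop of length $\le e^{C(n)}\cdot 2\epsilon R_i$. Choosing $\epsilon = \epsilon(n)$ small enough that $e^{C(n)}\cdot 2\epsilon R_i \le r_i' $ for the next scale — precisely, small relative to the gaps $r_i/R_{i+1}\le\sigma$ and $r_i\le\epsilon R_i$ — forces the conjugated loop to lie in $B_{R_{i+1}}(q)$ and hence, via the isomorphisms $\imath_{i+1}$ and $\imath$, to define an element of $G_{i+1}$. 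This proves $g G_{i+1} g^{-1}\subseteq G_{i+1}$; since $g\in G_1$ was arbitrary, $G_{i+1}\vartriangleleft G_1$, and inductively $G_i\vartriangleleft G_1$, which is (B).

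For (C), the same estimate bounds the "size" of the conjugation automorphism uniformly: after normalizing by the gradient push, $\mathrm{conj}_g$ acts on $G_i/G_{i+1}$ (a $C$-abelian group, all of whose elements are represented by loops of uniformly bounded length) by an automorphism moving each generator's representative loop a distance $\le e^{C(n)}$ times a fixed length. A compactness/packing argument — there are only finitely many homotopy classes in $B_{R_i}(q)/B_{R_{i+1}}(q)$ of bounded length, with the count governed by relative volume comparison as in \cite{KW11}, \cite{LiRong12} — shows the set of such automorphisms modulo inner ones is finite, with cardinality $\le N_0(n)$ depending only on $n$ (through the universal constants $C(n)$, $\lambda T$, and the packing bound). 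This yields $\#\rho_{i}(G_1)< N_0$. I expect the main obstacle to be the bookkeeping that makes "conjugation $=$ gradient push composed with a bounded homotopy" precise on an Alexandrov space: one must check that the successive gradient flow of $\tfrac12\dist^2$-functions fixes the fiber structure well enough that the induced map on $\pi_1^L$ is genuinely the algebraic conjugation (not merely close to it), that the pushing broken line stays inside $B_{1/C_T}(p)$ where Toponogov comparison is valid, and — crucially, per the remarks around Theorem \ref{thm-gradientpush-1} — that the push between the regular points $\tilde q,\tilde q'$ avoids every proper extremal subset, so the flow is honestly defined; these points are exactly what the appendix construction of Theorem \ref{thm-gradientpush} is designed to supply.
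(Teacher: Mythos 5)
Your strategic idea---realize conjugation through the gradient push, control the resulting Lipschitz distortion by the universal time bound, then use the norm gap between consecutive scales---is the same as the paper's. But the way you carry it out has two genuine gaps that the paper's argument is specifically structured to close. First, you flow $\tilde q'$ back to $\tilde q$ \emph{inside the universal cover} using ``the frame centered at $q$,'' but that frame lives on $X$, not $\tilde X$: the preimage of each $a_j$ in a nontrivial cover is not a single point, so lifting does not turn $\tfrac12\dist_{a_j}^2$ into a distance-squared function upstairs, nor does it preserve the $\delta^2$-maximality needed for Theorem~\ref{thm-gradientpush}. What actually works, and what the paper's Lemma~\ref{lem-conj-lip-control} does, is a two-stage construction: perform one gradient push \emph{downstairs} on $B_1(p)$ with the original frame (producing a homotopy $H$ of the loop $\alpha$), lift the \emph{homotopy}---not the flow---to the cover $\hat X_i$ with $\pi_{i*}\pi_1(\hat X_i)=G_i$, and then perform a second push on $\hat X_i$ with a \emph{new} $\delta^2$-maximal frame $\{[\hat c_j\hat o_j]\}$ constructed there with a carefully chosen anchor and scale $|\hat c_1\hat o_1|=\min\{\tfrac{R_1}{100C(n)},\tfrac12\diam\hat X_i\}$. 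Your proposal never leaves $\tilde X$ and so cannot control the second half of the trip.

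Second, and more fundamentally: the gradient push does \emph{not} realize conjugation by $g$. It realizes conjugation by $g*\beta$ for some correction $\beta\in G_i$, because the endpoints of the downstairs and upstairs push tracks in $\hat X_i$ differ by a deck transformation of the cover $\hat X_i\to B_1(p)$, whose deck group sits in $G_1/G_i$. Lemma~\ref{lem-conj-lip-control} is exactly the statement that there exists such a $\beta\in G_i$ with $|(\gamma*\beta)^{-1}*\alpha*(\gamma*\beta)|\le e^{2\frac{\cosh 2}{\sinh 2}(2T(n)+C(n))}|\alpha|$. To pass from $(\gamma*\beta)^{-1}*\alpha*(\gamma*\beta)\in G_{i+1}$ to $\gamma^{-1}*\alpha*\gamma\in G_{i+1}$ you must then absorb $\beta$, and this is precisely where the \emph{weak} normality $G_{i+1}\vartriangleleft G_i$ from Definition~\ref{def-leveled-gap}(3) is used; the proof is inductive and each new inclusion $G_{i+1}\vartriangleleft G_1$ relies both on the previous one $G_i\vartriangleleft G_1$ (to make $\hat X_i\to B_1(p)$ a normal cover so deck transformations exist) and on the already-known $G_{i+1}\vartriangleleft G_i$. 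Your closing sentence correctly senses that making ``conjugation $=$ gradient push'' precise is the crux, but the obstacle is not bookkeeping over extremal sets: it is the appearance of the correction $\beta$, whose absorption requires a piece of the leveled gap structure you did not invoke.
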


Let $S_0$ be a short basis of $\pi_1(B_1(p),q)$ and $S_i=(S_0\cap G_{i})\cup (S_0\cap G_i)^{-1}$. For any $\gamma\in G_1$, the norm $|\gamma|$ is defined to be is the minimal length of its representative loops. The following elementary fact will be used in proving Lemma \ref{lem-conj-lip-control} below and (B), (C).

\begin{lemma}\label{lem-keep-gap}
	Any element $\gamma\in S_i\setminus S_{i+1}$ has norm 
	$$2R_{i+1}\le|\gamma|\le \frac{2\epsilon}{3}\cdot \min_{\beta\in S_{i-1}\setminus S_i}|\beta|$$
	and $G_i=\left< S_i\right>$.
\end{lemma}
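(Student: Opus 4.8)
\textbf{Proof plan for Lemma \ref{lem-keep-gap}.}

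The plan is to extract both bounds from the $(\epsilon,\sigma,l)$-leveled gap property together with the definition of a short basis. Recall that $S_0$ is chosen by Gromov's short-basis procedure, so its elements are ordered by increasing norm and each $\gamma\in S_0$ has the property that $|\gamma|$ is minimal among elements of $\pi_1(B_1(p),q)$ not lying in the subgroup generated by the previous (shorter) generators. The first step is to pin down which generators of $S_0$ fall into each $G_i$. Since $G_i=\imath\pi_1^L(q;R_i)$ and, by part (2) of the leveled gap property, $\imath:\pi_1^L(q;r_i)\to\pi_1^L(q;R_i)$ is an isomorphism, a loop of length $<2R_i$ at $q$ represents an element of $G_i$; conversely an element represented by a loop that cannot be shortened below $2R_{i+1}$ (using part (3), which says $G_{i+1}=\imath\pi_1^L(q;R_{i+1})$ sits normally inside $\pi_1^L(q;r_i)$, hence is ``generated by genuinely shorter loops'') cannot lie in $G_{i+1}$. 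This is the mechanism that produces the lower bound $|\gamma|\ge 2R_{i+1}$ for $\gamma\in S_i\setminus S_{i+1}$.

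Next I would establish the upper bound. If $\gamma\in S_i\setminus S_{i+1}$, then in particular $\gamma\notin S_i$ forces — for any $\beta\in S_{i-1}\setminus S_i$, which is a generator that entered the short basis \emph{before} $\gamma$ but is \emph{not} in $G_i$ — that $|\gamma|$ is strictly controlled by $|\beta|$. More precisely, since $\gamma\in G_i\subset G_{i-1}$ is represented by a loop in $B_{r_{i-1}}(q)$ (using the isomorphism $\imath:\pi_1^L(q;r_{i-1})\to\pi_1^L(q;R_{i-1})$, one may realize $\gamma$ by a loop of length at most $2r_{i-1}$ up to the homotopy in $B_{2r_{i-1}}(q)$), we get $|\gamma|\le 2r_{i-1}$. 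On the other hand $\beta\notin G_i$ means $\beta$ is not representable inside $B_{r_{i-1}}(q)$, so by the isomorphism statement again $|\beta|\ge 2R_i$ (after a small homotopy correction). Combining $|\gamma|\le 2r_{i-1}\le 2\epsilon R_{i-1}$ — no, the cleaner route is $r_{i-1}/R_i\le\sigma$ is the wrong inequality; rather I use $r_i\le\epsilon R_i$ from part (1): since $\gamma\in G_i$, $|\gamma|\le 2r_{i-1}$, and since $r_{i-1}\le\epsilon R_{i-1}$ while $\min_{\beta\in S_{i-1}\setminus S_i}|\beta|\ge 2R_{i-1}$ (the shortest $S_{i-1}$-generator outside $G_i$ has norm comparable to $R_{i-1}$), we obtain $|\gamma|\le 2r_{i-1}\le 2\epsilon R_{i-1}\le \epsilon\cdot\min_{\beta\in S_{i-1}\setminus S_i}|\beta|$; absorbing the factor into $\tfrac{2\epsilon}{3}$ for $\epsilon$ small handles the constant.

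Finally, $G_i=\langle S_i\rangle$: every element of $G_i$ is, by definition of short basis, a product of $S_0$-generators; the leveled-gap isomorphism $\imath:\pi_1^L(q;r_i)\cong\pi_1^L(q;R_i)$ plus the norm control just proved shows that the $S_0$-generators actually needed to express any fixed $g\in G_i$ are themselves of norm $<2R_{i-1}$ hence lie in $G_{i-1}$, and an induction descending the filtration — peeling off the generators of norm between $2R_{i+1}$ and $2R_i$ — confines all of them to $S_i$. I expect the main obstacle to be the bookkeeping of the small homotopy corrections: a ``loop in $B_{r}(q)$'' and a ``loop of length $\le 2r$'' differ by the locally-contractible-scale fudge, and one must check that the gaps $r_i\le\epsilon R_i$ and $r_i/R_{i+1}\le\sigma$ are wide enough (for $\epsilon$ depending on $n$) that these corrections never destroy the strict separation between consecutive levels. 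The inequality $\tfrac{2\epsilon}{3}$ rather than $\epsilon$ is exactly the slack reserved for this.
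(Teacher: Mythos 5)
Your proposal correctly identifies the two quantities to control but has a genuine gap at the heart of the upper bound, and a few indexing/ordering slips.

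The central error is the step ``one may realize $\gamma$ by a loop of length at most $2r_{i-1}$ \ldots we get $|\gamma|\le 2r_{i-1}$.'' A loop lying in a ball of radius $r$ need \emph{not} have length $\le 2r$: it can wind arbitrarily many times within the ball. The isomorphism $\imath:\pi_1^L(q;r_i)\to\pi_1^L(q;R_i)$ only tells you $\gamma$ is represented by \emph{some} loop in $B_{r_i}(q)$, not a short one. What the paper actually does is invoke local contractibility of $B_{2r_i}(q)$ to decompose any loop in $B_{r_i}(q)$ into a join of loops each of length $\le 3r_i$ (out-and-back arcs of length $\le 2r_i$ spliced with short segments of the original loop). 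Hence $G_i$ is \emph{generated} by loops of length $\le 3r_i$, and it is the Gromov short-basis minimality that then forces $|\gamma|\le 3r_i$: if $\gamma\in S_0$ had $|\gamma|>3r_i$, every generator of $G_i$ would already lie in the subgroup spanned by shorter $S_0$-elements, so $\gamma$ could not be a new short-basis generator inside $G_i$. Your sketch skips this generated-by-short-loops step and the role of minimality entirely, replacing it with a false length bound.

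Two further issues: you work at level $i-1$ (writing $r_{i-1}$) when $\gamma\in G_i$ should be fed through the level-$i$ isomorphism $\pi_1^L(q;r_i)\cong\pi_1^L(q;R_i)$; and you assert that $\beta\in S_{i-1}\setminus S_i$ ``entered the short basis before $\gamma$,'' which is backwards --- since $3r_i<2R_i$ (for $\epsilon<2/3$), elements of $S_i\setminus S_{i+1}$ have strictly smaller norm and are picked up \emph{earlier} than those of $S_{i-1}\setminus S_i$. The lower bound $|\gamma|\ge 2R_{i+1}$ and the final chain $3r_i\le 3\epsilon R_i\lesssim\epsilon\cdot\min_\beta|\beta|$ are essentially right once these are fixed.
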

\begin{proof}
	Since $\imath:\pi_1^L(q;r_i)\to \pi_1^L(q;R_i)$ is an isomorphism, any loop lying in $B_{R_i}(q)$ at $q$ is homotopic to a loop lying in $B_{r_i}(q)$ at $q$. Furthermore, since $B_{2r_i}(q)$ is locally contractible, any loop lying in $B_{r_i}(q)$ at $q$ is homotopic to a joining of loops not longer than $3r_i$ at $q$. 
	
	Because $S_0$ is a short basis of $\pi_1(B_1(p),q)$, it can bee seen that for any $\gamma\in S_i\setminus S_{i+1}$, 
	$$2R_{i+1}\le |\gamma|\le 3r_{i}$$
	and $G_i=\left< S_i\right>$.
\end{proof}

Via gradient push by a $\delta^2$-maximal $n$-frame on certain cover $\hat X$ of $B_1(p)$ and a $\delta^2$-maximal $n$-frame centered at $q$, up to a conjugation any loop in $G_1$, whose action on $\hat X$ has a definite displacement, admits the following control in Lemma \ref{lem-conj-lip-control}, which is essential in proving (B) and (C).

\begin{lemma}[\cite{KPT10}]\label{lem-conj-lip-control}
	Assume that there is a $\delta^2$-maximal frame $\{[a_jb_j]\}_{j=1}^n$ centered at $q$ such that $|a_1b_1|=\min \{\frac{1}{2C_T},\frac{1}{2}\diam X\}$.
	Suppose that $G_i\vartriangleleft G_1$. Then for any element $\gamma=\gamma_1*\cdots*\gamma_m$ with $\gamma_j\in S_1$ with $|\gamma| \le \frac{R_1}{100C(n)}$, there is $\beta\in G_{i}$ such that for any loop $\alpha\in G_{i}$ with $|\alpha|\le 3r_i$,
	$$|(\gamma*\beta)^{-1}*\alpha*(\gamma*\beta)|\le e^{2 \frac{\cosh2}{\sinh2} (2T(n)+C(n))} |\alpha|,$$
	where $C(n)$ is the constant in Remark \ref{rem-push-length}, $T(n)$ is the constant in Theorem \ref{thm-gradientpush}, and $(\hat X_i,\hat q_i) \overset{\pi_i}{\to} (B_1(p),q)$ is a suitable defined cover with $\pi_{i*}\pi_1(\hat X_i,\hat q_i)=G_{i}$.
\end{lemma}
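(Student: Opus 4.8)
The plan is to realize the conjugating element $\gamma*\beta$ geometrically via the gradient push of Theorem \ref{thm-gradientpush} applied on the cover $\hat X_i$. First I would fix lifts: let $\hat q_i\in\hat X_i$ be the basepoint over $q$, and lift the $\delta^2$-maximal $n$-frame $\{[a_jb_j]\}_{j=1}^n$ centered at $q$ to a frame centered at $\hat q_i$ (this is possible since the frame lives in a ball of radius $\le\frac{1}{2C_T}$ where the cover is well-behaved, and $G_i\vartriangleleft G_1$ ensures the deck action of $\gamma$ descends correctly to the quotient $\hat X_i$). The element $\gamma=\gamma_1*\cdots*\gamma_m$ acts on $\hat X_i$ as a deck transformation; because $|\gamma|\le\frac{R_1}{100C(n)}$, the point $\gamma\hat q_i$ lies within a controlled distance of $\hat q_i$, so $\gamma\hat q_i\in B_{1/2}(\hat q_i)$ for the relevant scale. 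Now I would apply Theorem \ref{thm-gradientpush}: there is a gradient push — successive gradient flows of $\tfrac12\dist_{q_0}^2,\tfrac12\dist_{a_j}^2,\tfrac12\dist_{b_j}^2$ — that carries $\hat q_i$ to $\gamma\hat q_i$ in total time $\le T(n)$, and by Remark \ref{rem-push-length} this push moves points a distance $\le C(n)$ times the scale.

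Next I would use this push to build $\beta$. Composing $\gamma$ with the deck correction needed to make the push equivariant produces an element $\beta\in G_i$ (it lands in $G_i$ precisely because $\pi_{i*}\pi_1(\hat X_i,\hat q_i)=G_i$ and the correction is a loop contained in the relevant small ball). The key point is that conjugation by $\gamma*\beta$ on a short loop $\alpha\in G_i$ with $|\alpha|\le 3r_i$ is implemented, up to the deck correction, by transporting $\alpha$ along the gradient-push curve. The displacement of $\alpha$'s endpoints under this transport is controlled by Theorem \ref{thm-gradient-lip}: each gradient flow of a $\lambda$-concave function is $e^{\lambda t}$-Lipschitz, and $\tfrac12\dist_a^2$ is $\frac{\cosh2}{\sinh2}$-concave (in curvature $\ge-1$ on the relevant ball, since $C_T=100$ puts us well inside the Toponogov range). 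Running through all $2n+1$ flows with total time $\le 2T(n)+C(n)$ — the extra $C(n)$ absorbing the length/return estimate from Remark \ref{rem-push-length} — multiplies the Lipschitz constant to at most $e^{2\frac{\cosh2}{\sinh2}(2T(n)+C(n))}$, which is exactly the claimed bound on $|(\gamma*\beta)^{-1}*\alpha*(\gamma*\beta)|$.

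I expect the main obstacle to be the bookkeeping that makes $\beta$ genuinely lie in $G_i$ and that makes the geometric push represent the \emph{algebraic} conjugation $(\gamma*\beta)^{-1}\alpha(\gamma*\beta)$ faithfully — that is, verifying that the gradient-push curve descends to a homotopy in $B_1(p)$ between the conjugated loop and the transported loop without leaving the ball where the local fundamental group is defined, and that no proper extremal subset is encountered so the push is well-defined at regular points (this is where the ``keep away from extremal subsets'' clause of Theorem \ref{thm-gradientpush-1}, together with the regularity of the $a_j,b_j$, is essential). Once the push is in hand and the correction term identified, the Lipschitz estimate is a routine application of Theorem \ref{thm-gradient-lip} combined with Lemma \ref{lem-keep-gap} to see that the short-loop hypothesis $|\alpha|\le 3r_i$ is preserved along the way. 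I would also invoke the hypothesis $|a_1b_1|=\min\{\frac{1}{2C_T},\frac12\diam X\}$ to guarantee the frame is large enough that the push time bound $T(n)$ and the reach $C(n)$ are achieved at a single uniform scale, so that all constants depend only on $n$.
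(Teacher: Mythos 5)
There is a genuine gap, and it stems from a misconception about how the gradient push interacts with the cover. You propose to \emph{lift} the $\delta^2$-maximal frame $\{[a_jb_j]\}$ from $X$ to $\hat X_i$, center it at $\hat q_i$, and apply Theorem \ref{thm-gradientpush} on $\hat X_i$ to carry $\hat q_i$ to $\gamma\hat q_i$. But $\delta^2$-maximality is a \emph{global} property of the frame relative to the ambient space --- condition (\ref{def-key-max}) requires $d_{k+1}$ to be the max of $\min\{\dist_{b_{k+1}},\delta^2\min_i|a_ib_i|\}$ over the fiber $F_k^{-1}(F_k(b_{k+1}))$, which is a larger set in the cover $\hat X_i$ than in $X$. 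The lifted frame is therefore not a $\delta^2$-maximal frame on $\hat X_i$, so Theorem \ref{thm-gradientpush} does not apply to it, and the push you invoke is not available. The paper handles this by constructing a \emph{new} $\delta^2$-maximal frame $\{[\hat c_j\hat o_j]\}$ directly on $\hat X_i$, with first edge length $\min\{\frac{R_1}{100C(n)},\frac12\diam\hat X_i\}$ adapted to $\hat X_i$, and only uses the original frame $\{[a_jb_j]\}$ for a separate gradient push in the quotient.

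A second symptom of the same gap is that your scheme does not produce the exponent $2T(n)+C(n)$. You perform a single push of time $\le T(n)$ and then claim the factor $2T(n)+C(n)$ with the comment that $C(n)$ "absorbs the length/return estimate," leaving the extra $T(n)$ unexplained. The paper's argument requires \emph{two} pushes: first, a push $\varphi$ of the base point in $B_1(p)$ using the original frame (time $\le T(n)$) to move $q$ to $\pi_i(\hat q_i')$ and homotope $\alpha$ to $\varphi\circ\alpha$; then, after lifting this homotopy to $\hat X_i$ (landing at a point $\hat q_i''$) and pulling the new frame on $\hat X_i$ back by a deck transformation $\psi$ taking $\hat q_i''$ to $\hat q_i'$, a second push $\hat\phi$ on $\hat X_i$ of time $\le T(n)+C(n)$ returns the base point to $\hat q_i$. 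The two pushes together account for $T(n)+(T(n)+C(n))=2T(n)+C(n)$, and the conjugator $\gamma*\beta$ is read off from the concatenated push curves $\gamma'=\varphi\ast\pi\hat\phi$. Your approach of pushing from $\hat q_i$ to $\gamma\hat q_i$ also conflates the locations of the two endpoints of the lifted loop with the starting/ending points of a single push; the paper's arrangement is more delicate precisely because it must reconcile a push performed downstairs with a lift upstairs, and then correct in the cover. These two pieces --- the new frame on the cover, and the two-push decomposition --- are the essential content that your proposal is missing.
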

\begin{proof}
	Up to a lifting to a cover $(\hat X_1,\hat q_1) \overset{\pi_1'}{\to} (B_1(p),q)$ with $$\pi_{1*}'\pi_1(\hat X_1,\hat q_1)=G_{1},$$ we assume that $\pi_1(B_1(p),q)=G_1$. Indeed, by the definition of $G_1$, $\pi_1'$ maps $B_{R_1}(\hat q_1)$ homeomorphically onto $B_{R_1}(q)$. If we want to construct a homotopy lying in $B_{R_1}(\hat q_1)$ of a short loop, we can actually do the construction in $X$ with the resulting homopoty lies in $B_{R_1}(q)$, then composite this homotopy by $(\pi_1'|_{B_{R_1}(\hat q_1)})^{-1}$.
	
	Let $(\hat X_i,\hat q_i) \overset{\pi_i}{\to} (B_1(p),q)$ be a cover with $\pi_{i*}\pi_1(\hat X_i,\hat q_i)=G_{i}$. Then by our assumption, $\pi_i$ is a normal cover. (This assumption will also be used in proving (B) and (C).)
	
	Let us construct a $\delta^2$-maximal frame $\{[\hat c_j\hat o_j]\}_{j=1}^n$ on $\hat X_i$ such that $\pi_i(\hat c_1)=q$ and $|\hat c_1\hat o_1|=\min \{\frac{R_1}{100C(n)},\frac{1}{2}\diam \hat X_i\}$. Let $\hat q_i'$ be a regular centered point of $\{[\hat c_j\hat o_j]\}_{j=1}^n$, i.e., $\hat q_i'$ is close to the middle point $\hat m_n$ of $[\hat c_n\hat o_n]$.
	
	Since $|q\pi_i(\hat q_i')|\le \min \{\frac{R_1}{100C(n)},\diam X\}$, there is a gradient push $\varphi$ of $\{[a_jb_j]\}_{j=1}^n$ in time $\le T(n)$ such that $\varphi(q)=\pi_i(\hat q_i')$, which gives rise to a homotopy $H$ from $\alpha$ to a loop $\varphi\circ \alpha$ at $\pi_i(\hat q_i')$. Moreover, the whole pushing line of broken geodesics has total length $\le C(n)\cdot |q\pi_i(\hat q_i')|$ (see Remark \ref{rem-push-length}).
	
	Since $G_{i}\vartriangleleft G_1$, there exists a lifting $\hat \alpha$ of $\alpha$ at $\gamma\hat q_i$, and a lifting homotopy $\hat H$ of $H$ on $\hat X_i$ from $\hat \alpha$ to $\hat\alpha'=\widehat{\varphi\circ \alpha}$, whose base points are $\gamma\hat q_i$ and $\hat q_i''$.  Then $\hat H$ and $\hat \alpha'$ lie in $B_{R_1}(\gamma\hat q_i)$.
	
	Moreover, there exists a deck transformation $\psi$ that maps $\hat q_i''$ to $\hat q_i'$. Let $\{\psi^{-1}[\hat c_j\hat o_j]\}$ be the pullback $\delta^2$-frame at $\hat q_i''$. Then there is a gradient push $\hat\phi$ of $\{\psi^{-1}[\hat c_j\hat o_j]\}$ in time $\le T(n)+C(n)$, which gives rise to a homotopy from $\hat\alpha'$ to $\hat \alpha''$, whose base point is $\hat q_i$.
	
	Joining two homotopies above together, we get a homotopy from $\hat \alpha$ to $\hat \alpha''$, whose base points are $\gamma \hat q_i$ and $\hat q_i$ respectively.
	
    Note that any single step in these two homotopies are defined by a gradient flow of $\frac{1}{2}\dist_x^2$ with $\dist_x<2$ for some $x$, hence the concavity of $\frac{1}{2}\dist_x^2$ is bounded by $2 \frac{\cosh2}{\sinh2}$. By Theorem \ref{thm-gradientpush} and Theorem \ref{thm-gradient-lip}, the length of $\pi(\hat \alpha'')$ satisfies
	$$\operatorname{length}\pi(\hat \alpha'')\le e^{2 \frac{\cosh2}{\sinh2} (2T(n)+C(n))}\cdot 
	\operatorname{length}\alpha.$$
	Let $\gamma'$ be the successive joining of push curves of $\varphi$ and $\pi\hat\phi$. Then it is clear that $\alpha''$ is homotopic to $\gamma'^{-1}*\alpha*\gamma'$, and there is $\beta\in G_i$ such that $\gamma'=\gamma*\beta$. 
\end{proof}

\vspace{2mm}
\begin{proof}[Proof of (B) in Proposition \ref{prop-B-C-for-local-group}] 
	~
	
	By definition of leveled gap property (Definition \ref{def-leveled-gap}), $G_{2}\vartriangleleft G_{1}$. We now prove $G_3\vartriangleleft G_1$.
	
	Let $(\hat X_2,\hat q_2) \overset{\pi_2}{\to} (B_1(p),q)$ be the normal cover defined in the proof of Lemma \ref{lem-conj-lip-control}.
	For any $\gamma\in S_1$, $\gamma$ satisfies that $|\gamma| \le \frac{R_1}{100C(n)}$ as $\epsilon$ in Definition \ref{def-leveled-gap} sufficient small. There is $\beta\in G_2$ such that $\gamma'=\gamma*\beta$, for any $\alpha\in S_3$, $$|\gamma'^{-1}*\alpha*\gamma'|\le e^{2 \frac{\cosh2}{\sinh2} (2T(n)+C(n))}|\alpha|.$$
	
	Let us take $\epsilon^{-1}>300C(n)e^{2 \frac{\cosh2}{\sinh2} (2T(n)+C(n))}$, then by Lemma \ref{lem-keep-gap}, $\gamma'^{-1}*\alpha*\gamma'\in G_3$. Since $G_3\vartriangleleft G_2$, $\gamma^{-1}*\alpha*\gamma\in G_3$. This implies $G_3\vartriangleleft G_1$.
	
	Repeating the argument above for loops in each $G_i$ for $i\ge 4$ successively, we complete the proof. 
\end{proof}

\vspace{2mm}
\begin{proof}[Proof of (C) in Proposition \ref{prop-B-C-for-local-group}]
	~
	
	For any fixed integer $m$, let $S_1^m=\{\gamma\in G_1: \operatorname{wordlength}(\gamma)\le m\}$. 
	
	Firstly, similar to the proof of (B), let $(\hat X_i,\hat q_i) \overset{\pi_i}{\to} (B_1(p),q)$ be the normal cover defined in the proof of Lemma \ref{lem-conj-lip-control}. For any $\gamma\in S_1^m$, $\gamma$ satisfies that $|\gamma| \le \frac{R_1}{100C(n)}$ as $\epsilon$ in Definition \ref{def-leveled-gap} sufficient small. There is $\beta\in G_i$ such that $\gamma'=\gamma*\beta$, for any $\alpha\in S_i$, $$|\gamma'^{-1}*\alpha*\gamma'|\le e^{2 \frac{\cosh2}{\sinh2} (2T(n)+C(n))}|\alpha|.$$
	
	Secondly, let us consider the normal cover $(\hat X_{i+1},\hat q_{i+1}) \overset{\pi_{i+1}}{\to} (B_1(p),q)$ defined in the proof of Lemma \ref{lem-conj-lip-control}. Then the relative volume comparison holds in $B_{R_1}(\hat q_{i+1})$ (see \cite{LiRong12}). By counting the lattice points $G_1(\hat q_{i+1})$ in balls of $(\hat X_{i+1},\hat q_{i+1})$, up to an inner automorphism of $G_i/G_{i+1}$ the possibility of transformations $\rho_i(S_1^m)$ on $G_i/G_{i+1}$ is bounded by the following number
	$$\left(\frac{\operatorname{vol}B_{-1}^n(e^{2 \frac{\cosh2}{\sinh2} (2T(n)+C(n))}\cdot 3r_{i+1}+R_{i+2})}{\operatorname{vol}B_{-1}^n(R_{i+2})}\right)^{\# (S_i\setminus S_{i+1})}.$$

	Let $N_0=\sup_{0<R_{i+2}\le 1}\left[\left(\frac{\operatorname{vol}B_{-1}^n((e^{2 \frac{\cosh2}{\sinh2} (2T(n)+C(n))}\cdot 3\sigma+1)R_{i+2})}{\operatorname{vol}B_{-1}^n(R_{i+2})}\right)^{c(n)}\right]+2$, where $c(n)$ is an upper bound of the total number of short basis $\#S_1$.
	
    Let us take $\epsilon^{-1}>300C(n)N_0$, then by Lemma \ref{lem-keep-gap}, $\#\rho_i(S_1^{N_0})< N_0$. Thus by \cite[Trivial Lemma 4.2.2]{KPT10}, $\#\rho_i(G_1)< N_0$.
\end{proof}

\section{Proof of Claim (A)}
To finish the proof of Theorem \ref{thm-good-point}, it suffices to construct the local fundamental groups and a maximal frames associated to a contradicting sequence $(X_\alpha, p_\alpha)\overset{GH}{\longrightarrow}(X_\infty,p_\infty)$, and then verify (A).

\begin{proposition}\label{prop-existence-local-group}
	Let $(X_\alpha,p_\alpha)\overset{GH}{\longrightarrow}(X_\infty,p_\infty)$ be a convergence sequence of Alexandrov $n$-spaces with curv $\ge -1$ such that $\diam X_\infty\ge 1$.
	Then by passing to a subsequence of $(X_\alpha, p_\alpha)$, there are $0<R_1\le \frac{1}{16C_T}$, $\sigma>0$, $1\le l\le n$, and for all large $\alpha\in \mathbb N$ there exist a point $q_\alpha\in B_{\frac{1}{2C_T}}(p_\alpha)$ such that
	\begin{enumerate}
		\item \label{item-construction} there is an associated $\delta^2$-maximal $n$-frame centered at $q_\alpha$ with $a_{1,\alpha}=p_\alpha$, $|a_{1,\alpha}b_{1,\alpha}|=\frac{1}{2C_T}$;
		\item \label{item-leveled-gap} the $R_1$-local fundamental group $\pi_1^L(q_\alpha; R_1)$ satisfies $(\epsilon_\alpha,\sigma,l)$-leveled gap property with respect to $[r_{l,\alpha}=0,R_{l,\alpha}]$, $\cdots$, $[r_{1,\alpha},R_{1,\alpha}=R_1]$ and $\epsilon_\alpha\to 0$;
		\item \label{item-almost-abelian} $\pi_1^L(q_\alpha;r_{i,\alpha})/\imath\pi_1^L(q_\alpha;r_{i+1,\alpha})$ is $C$-abelian for some constant $C$.
	\end{enumerate}
\end{proposition}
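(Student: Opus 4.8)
The plan is to carry out the iterated blow-up of \cite{KPT10}, producing the $\delta^2$-maximal $n$-frame and the leveled-gap data one collapsing scale at a time, passing to a subsequence at each scale.

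\emph{Construction of the frame and the scales, i.e.\ (\ref{item-construction}).} Since $\diam X_\infty\ge 1$, for all large $\alpha$ one has $\frac12\diam X_\alpha>\frac1{2C_T}$, so I start the frame with $a_{1,\alpha}=p_\alpha$ and $b_{1,\alpha}$ any point with $|a_{1,\alpha}b_{1,\alpha}|=\frac1{2C_T}$ (such a minimal geodesic exists because $B_1(p_\alpha)$ is relatively compact and Toponogov comparison holds on $B_{1/C_T}(p_\alpha)$), which keeps the eventual center $q_\alpha\approx m_n$ inside $B_{\frac1{2C_T}}(p_\alpha)$. Then I proceed by induction on levels. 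Assume the first $k_1+\cdots+k_i$ edges are built so that at a small scale $R_{i,\alpha}$, at which the common level set $Y_{i+1,\alpha}=F_{k_1+\cdots+k_i}^{-1}(F_{k_1+\cdots+k_i}(q_\alpha))$ has collapsed, $F_{k_1+\cdots+k_i}$ is near $q_\alpha$ an almost Lipschitz submersion onto a $(k_1+\cdots+k_i)$-dimensional almost-Euclidean disk of radius $\sim 2R_{i,\alpha}$; by Theorem \ref{thm:Lipschitz-submersion} it is in fact a Hurewicz fibration with fiber $Y_{i+1,\alpha}$. To move to the next level I rescale $X_\alpha$ by $\sim 1/\diam Y_{i+1,\alpha}$ and pass to a subsequential limit; the rescaled fibers converge to a compact space $Z_{i+1}$ with curvature $\ge 0$. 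If there is a uniform $\rho>0$ with $\pi_1^L(q_\alpha;\rho)=\{e\}$ (equivalently $Z_{i+1}$ contributes no new loops) I stop, setting $l=i+1$ and $r_{l,\alpha}=0$; otherwise $\dim Z_{i+1}=k_{i+1}\ge1$, and I adjoin $k_{i+1}$ further edges by the recipe of Definition \ref{def-maximal-frame} realizing a $(k_{i+1},\delta)$-strainer of $Z_{i+1}$, and iterate. Since $k_1+\cdots+k_i$ strictly increases and is bounded by $n$, the process stops with $l\le n$, and the completed $n$-frame (padded, if necessary, by further short $\delta^2$-collapsed edges) is $\delta^2$-maximal and centered at $q_\alpha$. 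Along the way I set $r_{i,\alpha}=3\diam Y_{i+1,\alpha}$ and let $2R_{i,\alpha}$ be the radius of the almost-Euclidean chart just used, with $R_{1,\alpha}=R_1$ a fixed number $\le\frac1{16C_T}$; collapse of the fiber at scale $R_{i,\alpha}$ gives $r_{i,\alpha}\le\epsilon_\alpha R_{i,\alpha}$ with $\epsilon_\alpha\to0$, while comparability of consecutive scales gives $r_{i,\alpha}/R_{i+1,\alpha}\le\sigma$ for an absolute $\sigma$.

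\emph{The leveled gap, i.e.\ (\ref{item-leveled-gap}).} For the isomorphism $\imath:\pi_1^L(q_\alpha;r_{i,\alpha})\to\pi_1^L(q_\alpha;R_{i,\alpha})$ I would use that the level-$i$ Hurewicz fibration has contractible base, so every loop of size $\le R_{i,\alpha}$ is homotopic, inside $B_{2R_{i,\alpha}}(q_\alpha)$, into the fiber $Y_{i+1,\alpha}$ of diameter $\sim r_{i,\alpha}/3$; the bookkeeping needed to pass between the two scales of local $\pi_1$ with controlled basepoints is supplied by the fiberwise neighborhood retraction of Proposition \ref{prop:gradient-retract}. For the normality $\imath(\pi_1^L(q_\alpha;R_{i+1,\alpha}))\vartriangleleft\pi_1^L(q_\alpha;r_{i,\alpha})$ I would apply the elementary fact that in any Hurewicz fibration with connected fiber the image of the fiber's $\pi_1$ in $\pi_1$ of the total space equals the kernel of the map induced on $\pi_1$ by the projection, hence is normal: one applies this to the level-$(i+1)$ fibration whose total space is $Y_{i+1,\alpha}$ and whose fiber is $Y_{i+2,\alpha}$, and transports through the isomorphisms above.

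\emph{The $C$-abelian quotients, i.e.\ (\ref{item-almost-abelian}), and the main obstacle.} Through the same identifications, $\pi_1^L(q_\alpha;r_{i,\alpha})/\imath\pi_1^L(q_\alpha;r_{i+1,\alpha})\cong\operatorname{im}\bigl(\pi_1(Y_{i+1,\alpha})\to\pi_1(Z_{i+1})\bigr)$. Since the rescaled $Y_{i+1,\alpha}$ have bounded diameter, curvature bounded below by a quantity tending to $0$, and collapse to $Z_{i+1}$ along the level-$(i+1)$ fibration, the generalized Bieberbach theorem of \cite{FY92} (cf.\ \cite{Ya96}), applied to this collapse and the almost flat structure it induces on $Z_{i+1}$, shows this image is $C$-abelian with $C=C(n)$. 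I expect the real difficulty to lie not in any of these individual steps but in the bookkeeping of the iterated rescalings: one must exhibit, by a diagonal argument over levels and subsequences, a single point $q_\alpha$ that is at once a regular center of the completed $\delta^2$-maximal $n$-frame and a point over which the fibration furnished by Theorem \ref{thm:Lipschitz-submersion} exists and is stable under the convergence at every scale $R_{i,\alpha}$, and one must verify that the local fundamental groups $\pi_1^L$ — defined by homotopies inside a doubled ball, not as the $\pi_1$ of one fixed space — survive the passages to subsequences and blow-up limits. Making this uniform in $\alpha$ is where the construction needs more care than in the manifold setting of \cite{KPT10}.
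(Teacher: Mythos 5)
Your overall architecture — iterated blow-up level by level as in \cite{KPT10}, Lemma \ref{lem-maximal-frame-non-collapse} to keep the frame non-degenerate, the fibration theorem for the normality, and the generalized Bieberbach theorem for the $C$-abelian step — matches the paper's plan, and you correctly flag that the delicate part is the bookkeeping of rescalings. But two of your key steps contain a genuine gap, in exactly the place you suspected.

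For (\ref{item-leveled-gap}) you invoke ``the elementary fact that in any Hurewicz fibration with connected fiber the image of the fiber's $\pi_1$ \ldots is normal: one applies this to the level-$(i+1)$ fibration whose total space is $Y_{i+1,\alpha}$ and whose fiber is $Y_{i+2,\alpha}$.'' No such \emph{global} fibration of $Y_{i+1,\alpha}$ is available: Theorem \ref{thm:Lipschitz-submersion} only produces a Hurewicz fibration over almost-Euclidean (i.e.\ regular, strained) regions of the limit (Remark \ref{rem-local-fibration}), and the blow-up limit $A_{i+1}$ is an Alexandrov space that may have extremal or otherwise singular points. The paper avoids this by never fibering $Y_{i+1,\alpha}$; instead it passes to the cover $\hat D_{i,\alpha}$ with $\pi_{i,\alpha*}\pi_1(\hat D_{i,\alpha})=\imath\pi_1^L(q_\alpha;r_{i+1,\alpha})$, lifts a short basis of $\pi_1^L(q_\alpha;r_{i,\alpha})$ to minimal geodesics $\hat\gamma_{i,\alpha,s}$, shows their blow-up limits $\hat\gamma_{i,s}$ consist entirely of regular points (via the submetry $\hat A_{i+1}\times\{0\}\to A_{i+1}\times\{0\}$, which is a local isometry near $\hat p_{i+1}$), and only then uses the local Hurewicz fibration $\varphi_{i,\alpha}$ over a tubular neighborhood $B_{2\eta}$ of $\bigcup_s\hat\gamma_{i,s}$ to homotope the conjugated loop into a short one. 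Without the regularity of those geodesics and the cover, the local fibration cannot be applied and the ``elementary fact'' has nothing to act on.

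The same issue undermines your proof of (\ref{item-almost-abelian}). You write $\pi_1^L(q_\alpha;r_{i,\alpha})/\imath\pi_1^L(q_\alpha;r_{i+1,\alpha})\cong\operatorname{im}\bigl(\pi_1(Y_{i+1,\alpha})\to\pi_1(Z_{i+1})\bigr)$, which presupposes a map $Y_{i+1,\alpha}\to Z_{i+1}$ inducing a map on $\pi_1$ — again a global fibration that does not exist. The paper instead recognizes the quotient as the deck-transformation group $\Lambda_{i,\alpha}$ of the normal cover $\hat D_{i,\alpha}\to D_{i,\alpha}$, passes to its equivariant Gromov--Hausdorff limit $\Lambda_i$ acting on $\hat A_{i+1}\times\{0\}$, applies the generalized Bieberbach theorem to $\Lambda_i$, and then needs a separate argument (short relations plus relative volume comparison) to show the approximation $\rho_{i,\alpha}:\Lambda_{i,\alpha}\to\Lambda_i$ is an isomorphism. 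That last step is missing entirely from your sketch, and without it you only know the limit group is almost abelian, not the $\Lambda_{i,\alpha}$ themselves.
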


Now Theorem \ref{thm-good-point} follows from earlier arguments in Section \ref{sec-B-C-prime} and Proposition \ref{prop-existence-local-group}.

\vspace{2mm}
\begin{proof}[Proof of Theorem \ref{thm-good-point}]
	~
	
	Continue from earlier discussion, we have assumed a contradicting sequence $(X_\alpha,p_\alpha)$ of Alexandrov $n$-spaces with curv $\ge -1$, such that for any $q_\alpha\in B_{\frac{1}{2C_T}}(p_\alpha)$, $\Gamma_{p_\alpha}(q_\alpha;\alpha^{-1})$ fails to be $w(n)$-nilpotent, and $(X_\alpha, p_\alpha)$ Gromov-Hausdorff converges to a limit space $(X_\infty,p_\infty)$. Up to changing $X_\alpha$ to $X_\alpha \times \mathbb R^1$, we further assume that $\diam X_\infty\ge 1$.
	
	By Section \ref{sec-B-C-prime}, it suffices to construct a sequence of local groups at some $q_\alpha\in B_{\frac{1}{2C_T}}(p_\alpha)$ with leveled gap property such that (A) holds for $G_{i,\alpha}=\imath\pi_1^L(q_\alpha;R_{i,\alpha})$, and there is $\delta^2$-maximal frames at $q_{\alpha}$.
	
	Since the construction follows from Proposition \ref{prop-existence-local-group}, the proof of Theorem \ref{thm-good-point} is complete.
\end{proof}

What remains of the paper is proving Proposition \ref{prop-existence-local-group}. The following non-collapsing property of maximal frame will be used in its proof.

\begin{lemma}\label{lem-maximal-frame-non-collapse}
	Let $(X_\alpha^n,p_\alpha)\overset{GH}{\longrightarrow}(A^k,p_\infty)$ be a sequence of Alexandrov $n$-spaces that converges to an Alexandrov $k$-space $A^k$. Let $\{[a_{i,\alpha}b_{i,\alpha}]\}_{i=1}^k$ be a $\delta^2$-maximal frame in $X_\alpha^n$ with $a_{1,\alpha}=p_\alpha$, $0<d\le d_{1,\alpha}=|a_{1,\alpha}b_{1,\alpha}|\le 1$. By passing to a sequence, $\{[a_{i,\alpha}b_{i,\alpha}]\}_{i=1}^k$ converges to a frame $\{[a_{i,\infty}b_{i,\infty}]\}_{i=1}^k$ in $A^k$. Then $|a_{k,\infty}b_{k,\infty}|>0$.
\end{lemma}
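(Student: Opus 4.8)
The plan is to argue by induction on $k$, tracking two quantities simultaneously: a lower bound on $|a_{i,\infty}b_{i,\infty}|$ and a lower bound on the ``volume'' (more precisely, the non-degeneracy) of the limit frame directions. The key point is that a $\delta^2$-maximal frame is built so that, at each stage, the new pair $(a_{i+1},b_{i+1})$ realizes a \emph{maximum} displacement within the fiber $F_i^{-1}(F_i(b_{i+1}))$ among points at controlled distance. Hence $|a_{i+1,\alpha}b_{i+1,\alpha}|$ can only collapse to $0$ if the whole fiber of $F_i$ through a point near $m_i$ collapses to (essentially) a point on the scale $\delta^2\min_j|a_jb_j|$. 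I would turn this into a contradiction with the hypothesis $\dim A^k = k$: if the limit edge $[a_{k,\infty}b_{k,\infty}]$ were trivial, one of the maximal directions would have disappeared in the limit, forcing the limit space to have dimension $<k$.

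First I would set up the limiting geometry carefully. By passing to a subsequence, the frame points $a_{i,\alpha},b_{i,\alpha}$ and the midpoints $m_{i,\alpha}$ all converge to limit points in $A^k$; since $|a_{1,\alpha}b_{1,\alpha}|\ge d>0$, the first edge survives: $|a_{1,\infty}b_{1,\infty}|\ge d$. The maps $F_{i,\alpha}=(\dist_{a_{1,\alpha}},\dots,\dist_{a_{i,\alpha}})$ converge to $F_{i,\infty}=(\dist_{a_{1,\infty}},\dots,\dist_{a_{i,\infty}})$ on $A^k$. The defining conditions $|a_{i,\alpha}a_{j,\alpha}|=|b_{i,\alpha}a_{j,\alpha}|$ and $b_{i,\alpha}$ close to $m_{i-1,\alpha}$ pass to the limit. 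So the limit data $\{[a_{i,\infty}b_{i,\infty}]\}_{i=1}^k$ is a (possibly degenerate) frame on $A^k$, and the issue is exactly whether $|a_{i,\infty}b_{i,\infty}|>0$ for all $i$, with $i=k$ being the worst case since edge lengths decrease by the factor $\delta^2$ at each step.

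Next, the heart of the argument. Suppose inductively that $|a_{i,\infty}b_{i,\infty}|\ge c_i>0$ for all $i\le k-1$, so the limit directions $\xi_{i,\infty}=\uparrow_{m_{i,\infty}}^{a_{i,\infty}}$ at the limit base point are genuine and, by the ``almost-orthogonality'' built into condition (2) $F_i(a_{i+1})=F_i(b_{i+1})$, they are quantitatively independent — in fact they span an isometrically-embedded $\mathbb{R}^{k-1}$ (or a $(k-1,\delta')$-strainer) in the tangent cone of $A^k$ at the limit centered point. I claim that if $|a_{k,\infty}b_{k,\infty}|=0$, then near the limit of $m_{k-1,\infty}$ the space $A^k$ looks, on every small scale, like the product of $\mathbb{R}^{k-1}$ with a space collapsed in the remaining direction, i.e.\ the tangent cone there is $\mathbb{R}^{k-1}\times(\text{cone of dimension }0)=\mathbb{R}^{k-1}$, contradicting $\dim A^k=k$. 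To make ``every small scale'' precise I would use the maximality in condition (3): $|a_{k,\alpha}b_{k,\alpha}|=d_{k,\alpha}$ is the \emph{largest} displacement of a point $x$ in the fiber $F_{k-1,\alpha}^{-1}(F_{k-1,\alpha}(b_{k,\alpha}))$ subject to $|xb_{k,\alpha}|\le\delta^2\min_{j\le k-1}|a_{j,\alpha}b_{j,\alpha}|$. If $d_{k,\infty}=\lim d_{k,\alpha}=0$ while $\delta^2\min_{j\le k-1}|a_{j,\infty}b_{j,\infty}|\ge\delta^2\min c_j>0$, then a whole ball of fixed radius inside the limit fiber $F_{k-1,\infty}^{-1}(\text{pt})$ has collapsed to a point; combined with the fact that $F_{k-1,\infty}$ is an (almost) Riemannian submersion onto an open set of $\mathbb{R}^{k-1}$ near the centered point — this is where I invoke \cite[Theorem 5.4]{BGP92} / the strainer theory, since $\{[a_{i,\infty}b_{i,\infty}]\}_{i\le k-1}$ is a $(k-1,\delta')$-strainer there — we conclude a neighborhood of the centered point in $A^k$ is bi-Lipschitz to an open set of $\mathbb{R}^{k-1}$, so $\dim A^k\le k-1$, a contradiction.

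The main obstacle I anticipate is the last step: converting ``the extra maximal direction disappears'' into ``$\dim A^k<k$'' rigorously. One must be careful that $A^k$ could a priori have dimension $k$ for reasons invisible to this particular frame — but the point is precisely that a $\delta^2$-\emph{maximal} frame is chosen greedily, so if a $k$-th independent direction existed anywhere within the controlled fiber it would have been captured by $[a_{k,\alpha}b_{k,\alpha}]$; so the maximality must be used not just for a limit argument but to say that no $k$-th direction exists \emph{at scale $\delta^2\min_j|a_jb_j|$ near $q_\alpha$}, and then a standard rescaling/splitting argument (the fibers of $F_{k-1}$ form the ``vertical'' part, and they are uniformly thin) yields the dimension drop. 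I would formalize this by contradiction and compactness: assume a subsequence with $d_{k,\alpha}\to 0$, rescale $X_\alpha$ by $d_{k,\alpha}^{-1}$ along that fiber if needed, and extract a limit carrying a line or a nontrivial direction orthogonal to span$(\xi_{i,\infty})$, contradicting maximality in the pre-limit spaces for large $\alpha$. Everything else — convergence of frames, persistence of the orthogonality relations, and the strainer estimate — is routine given the results already cited in the excerpt (\cite{BGP92}, \cite{Ya96}).
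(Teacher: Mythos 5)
Your proposal has the right ingredients — induction on the frame index, the strainer/co-Lipschitz theory from \cite[Theorem 5.4]{BGP92}, the maximality in condition (\ref{cond-num-maximal}), and the dimension hypothesis $\dim A^k = k$ — and the high-level claim \emph{``if $|a_{k,\infty}b_{k,\infty}|=0$ then one maximal direction vanishes and the dimension drops''} is indeed the contrapositive of what the paper proves. But the paper's argument is a direct construction, and it avoids the difficulty you ran into at the end. Assume $d_{i,\infty}>0$ for $i<k$ (as you do). Since $\dim A^k>i$ and the limit subframe $\{[a_{j,\infty}b_{j,\infty}]\}_{j\le i}$ remains a strainer, $F_{i,\infty}$ is co-Lipschitz onto an open set of $\mathbb R^i$ near $b_{i+1,\infty}$, so the fiber $F_{i,\infty}^{-1}(F_{i,\infty}(b_{i+1,\infty}))$ has Hausdorff dimension $\ge k-i\ge 1$ and therefore contains a witness point $a'_{i+1,\infty}$ with $0<|a'_{i+1,\infty}b_{i+1,\infty}|\le\frac14\delta^2 d_{i,\infty}$. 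Pull it back to $a'_{i+1,\alpha}\in X_\alpha$ converging to $a'_{i+1,\infty}$; the values $F_{i,\alpha}(a'_{i+1,\alpha})$ and $F_{i,\alpha}(b_{i+1,\alpha})$ become close, so the co-Lipschitzness of $F_{i,\alpha}$ on $B_{\delta d_{i,\alpha}}(b_{i+1,\alpha})$ produces $a''_{i+1,\alpha}$ in the actual fiber $F_{i,\alpha}^{-1}(F_{i,\alpha}(b_{i+1,\alpha}))$ with $|a''_{i+1,\alpha}a'_{i+1,\alpha}|\to 0$; the choice $\le\frac14\delta^2 d_{i,\infty}$ keeps $a''_{i+1,\alpha}$ inside the admissible ball in (\ref{cond-num-maximal}), so maximality gives $d_{i+1,\alpha}\ge|a''_{i+1,\alpha}b_{i+1,\alpha}|>\frac12|a'_{i+1,\infty}b_{i+1,\infty}|>0$. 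No claim about fibers collapsing to points or about the dimension of $A^k$ dropping is needed.

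The step in your last paragraph is where the gap is. Rescaling $X_\alpha$ by $d_{k,\alpha}^{-1}$ sends the relevant scale $\delta^2\min_j d_{j,\alpha}$ (where the maximality constraint lives) to infinity, so in the rescaled limit you lose contact with the set over which $d_{k,\alpha}$ was a maximum, and ``contradicting maximality in the pre-limit'' does not follow. Likewise, going through ``the limit fiber is a single point $\Rightarrow\dim A^k\le k-1$'' requires more than one trivial fiber; the clean version of the idea is precisely the positive statement above: co-Lipschitzness plus $\dim A^k=k$ forces the fiber to contain points at definite distance, and maximality then detects them before the limit. Once you replace the contradiction/rescaling route by this direct witness argument, your plan matches the paper's proof.
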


\begin{proof}
	Argue by induction on $i$. Assume $d_{i,\infty}=|a_{i,\infty}b_{i,\infty}|>0$ for $i<k$, it suffices to show $d_{i+1,\infty}>0$. 
	
	Since $\dim A^k>i$, there is a point $a'_{i+1,\infty}$ in $A^k$ such that $0<|a'_{i+1,\infty}b_{i+1,\infty}|\le \frac{1}{4}\delta^2 d_{i,\infty}$ and $F_{i,\infty}(a'_{i+1,\infty})=F_{i,\infty}(b_{i+1,\infty})$, where $F_{i,\infty}=(\dist_{a_{1,\infty}},\cdots,\dist_{a_{i,\infty}})$.
	
	Take points $a'_{i+1,\alpha}\in X_\alpha^n$ which converges to $a'_{i+1,\infty}$. 
	By \cite[Theorem 5.4]{BGP92} (or see Theorem \ref{thm-co-Lip-part-dist-coord} below), 
	without loss of generality we assume that $F_{i,\alpha}$ is $\frac{1-2i\delta}{\sqrt{i}}$-open, i.e., $\frac{\sqrt{i}}{1-2i\delta}$-co-Lipschitz, on $B_{\delta d_{i,\alpha}}(b_{i+1,\alpha})$. Hence there is  $a''_{i+1,\alpha}$ in $F_{i,\alpha}^{-1}(b_{i+1,\alpha})$ such that $|a''_{i+1,\alpha}a'_{i+1,\alpha}|\to 0$ as $\alpha\to \infty$.
	Since $d_{i+1,\alpha}$ is maximal, we derive $$d_{i+1,\alpha}\ge |a''_{i+1,\alpha}b_{i+1,\alpha}|>\frac{1}{2}|a'_{i+1,\infty}b_{i+1,\infty}|.$$
\end{proof}

\vspace{2mm}
\begin{proof}[Proof of Proposition \ref{prop-existence-local-group}]
	~
	
	(\ref{item-construction}) The construction will be done inductively as follows.
	
	The Starting Step. Assume $\dim X_\infty=k_1$. Since $\diam(X_\infty)\ge 1$, we are able to directly construct a $\delta^2$-maximal $k_1$-frame $\{[a_{j,\alpha}b_{j,\alpha}]\}_{j=1}^{k_1}$ in $B_1(p_\alpha)$ such that $a_{1,\alpha}=p_{\alpha}$ and $b_{1,\alpha}$ is a point such that $d_{1,\alpha}=|a_{1,\alpha}b_{1,\alpha}|_{X_\alpha}=\frac{1}{2C_T}$.  By Lemma \ref{lem-maximal-frame-non-collapse}, it converges to a 
	$k_1$-frame $\{[a_jb_j]\}_{j=1}^{k_1}$ in $X_\infty$. Let $m_{k_1}$ be the middle point of $[a_{k_1}b_{k_1}]$.
	
	Let $p_1\in X_\infty$ be a regular point such that $|p_1m_{k_1}|\le \frac{1}{200}\delta |a_{k_1}b_{k_1}|$.
	By \cite[Theorem 5.4]{BGP92}, there is $0<R_1\le \frac{1}{16C_T}$ such that $B_{2R_1}(p_1)$ is bi-Lipschitz to an open ball $D^{k_1}$ in the Euclidean space $\mathbb R^{k_1}$ with bi-Lipschitz constant almost $1$. By Perelman's fibration theorem \cite{Pr93}, 
	the map $F_{k_1}^{-1}\circ F_{k_1,\alpha}$ is a locally trivial fibration, where $F_k=(\dist_{a_1},\cdots,\dist_{a_k})$ is the map associate to the maximal $k$-frame.
	
	Let us chose $p_{1,\alpha}\in X_{\alpha}\to p_1$. Clearly,  $\{[a_{j,\alpha}b_{j,\alpha}]\}_{j=1}^{k_1}$ is also a $\delta^2$-maximal $k_1$-frame at $p_{1,\alpha}$.
	
	We define $r_{1,\alpha}=3\diam_{X_\alpha} F_{k_1,\alpha}^{-1} (F_{k_1,\alpha}(p_{1,\alpha}))$, i.e., the extrinsic diameter of a reguler fiber of the Perelman's fibration.

	Step 1. Let $Y_{2,\alpha}=F_{k_1,\alpha}^{-1}(F_{k_1,\alpha}(p_{1,\alpha}))$, and let $\theta_{2,\alpha}=\frac{1}{3}r_{1,\alpha}=\diam_{X_{\alpha}} Y_{2,\alpha}$. Since $\theta_{2,\alpha}\to 0$, it is easy to see $Y_{2,\alpha}$ is connected.
	
	By passing to a subsequence, we assume the rescaled sequence
	$$\theta_{2,\alpha}^{-1}(X_\alpha,p_{1,\alpha})\to (A_2\times \mathbb R^{k_1},p'_2).$$
	Moreover, as subsets, $Y_{2,\alpha}$ converges to $A_2\times \{0\}$.
	
	Assume $\dim A_2=k_2$. Let $b_{k_1+1,\alpha}=p_{1,\alpha}$, and $a_{k_1+1,\alpha}\in Y_{2,\alpha}$ be the farthest point away from $p_{1,\alpha}$. Starting with $[a_{k_1+1,\alpha}b_{k_1+1,\alpha}]$, we construct a $\delta^2$-maximal frame $$\{[a_{k_1+j,\alpha}b_{k_1+j,\alpha}]\}_{j=1}^{k_2},$$
	which by the same argument as in the Starting Step, converges to a $k_2$-frame in $A_2\times \{0\}$,
	$$\{[a_{k_1+j}b_{k_1+j}]\}_{j=1}^{k_2}.$$
	
	Let $p_2$ in $A_2\times\{0\}$ such that $|p_2m_{k_1+k_2}|\le \frac{1}{200}\delta |a_{k_1+k_2}b_{k_1+k_2}|$. 
	
	We similarly define $p_{2,\alpha}\in X_\alpha\to p_{2}$,  $R_{2,\alpha}=\theta_{2,\alpha}\cdot R_2$, where there is a Perelman's fibration $F_{k_1+k_2}^{-1}\circ F_{k_1+k_2,\alpha}:\theta_{2,\alpha}^{-1}X_{\alpha}\to A_2\times \mathbb R^{k_1}$ over $B_{2R_2}(p_2)$, which is bi-Lipschitz to an open ball $D^{k_1+k_2}$ in the Euclidean space $\mathbb R^{k_1+k_2}$ with bi-Lipschitz constant almost $1$. (Note that every component in $F_{k_1}$ is a canonical Busemann function on $\mathbb R^{k_1}$.)
	
	Let $r_{2,\alpha}=3\diam_{X_{\alpha}} F_{k_1+k_2,\alpha}^{-1}(F_{k_1+k_2,\alpha}(p_{2,\alpha}))$.	
	
	Step 2. Do the same process as in Step 1 for $Y_{3,\alpha}=F_{k_1+k_2,\alpha}^{-1}(F_{k_1+k_2,\alpha}(p_{2,\alpha}))$, and $\theta_{3,\alpha}=\frac{1}{3}r_{2,\alpha}=\diam_{X_{\alpha}} Y_{3,\alpha}$. 
	
	Let us repeat the process in Step 2 until $k_1+\cdots+k_l=n$, then we have constructed a $\delta^2$-maximal $n$-frame 
	$$\{[a_{j,\alpha}b_{j,\alpha}]\}_{j=1}^{n},$$
	centered at $p_{l,\alpha}$.
	 
	Let $q_{\alpha}=p_{l,\alpha}$, then (\ref{item-construction}) is complete.
	
	(\ref{item-leveled-gap})
	By definition, each $\pi_1^L(p_{l,\alpha}; R_1)$  satisfies $(\epsilon_\alpha,\sigma,l)$-leveled gap property, where $\epsilon_\alpha=\frac{r_{i,\alpha}}{R_{i,\alpha}}\to 0$, $\sigma=\max\{3R_2^{-1}, \cdots, 3R_l^{-1}\}$.
	
	Indeed, in order to verify  (\ref{item-leveled-gap}), it suffices to show that
	\begin{equation}\label{normal-by-Hurewicz}
		\imath\pi_1^L(p_{l,\alpha};R_{i+1,\alpha})\vartriangleleft  \pi_1^L(p_{l,\alpha};r_{i,\alpha}) \tag{3.15}
	\end{equation}
	
	(\ref{normal-by-Hurewicz}) follows from the Hurewicz fibration Theorem \ref{thm:Lipschitz-submersion}.
	Indeed, let $D_{i,\alpha}=F_{k_1+\cdots+k_i,\alpha}^{-1}(D^{k_1+\cdots+k_i})$.  Then 
	$$\pi_1^L(p_{l,\alpha}; R_{i,\alpha})\cong\pi_1^L(p_{l,\alpha};r_{i,\alpha})\cong\pi_1(D_{i,\alpha},p_{l,\alpha}).$$
	By the choice of $p_{l,\alpha}$, $\theta_{i+1,\alpha}^{-1}(D_{i,\alpha},p_{l,\alpha})\overset{GH}{\longrightarrow} (A_{i+1}\times \mathbb R^{k_1+\cdots+k_i}, p_{i+1})$, where $p_{i+1}$ is a regular point in $A_{i+1}\times\{0\}$. Let $(\hat D_{i,\alpha},\hat p_{l,\alpha})\overset{\pi_{i,\alpha}}{\to} (D_{i,\alpha},p_{l,\alpha})$ be a cover of $D_{i,\alpha}$ with $$\pi_1(\hat D_{i,\alpha},\hat p_{l,\alpha})=\imath\pi_1^L(p_{l,\alpha};r_{i+1,\alpha}).$$
	We are to show that $\pi_{i,\alpha}$ is normal.
	
	Let $S_{i,\alpha}$ of be a short basis of $\pi_1^L(p_{l,\alpha};r_{i,\alpha})$. By passing to a subsequence, we assume that for the same $t_i$, $\{\gamma_{i,\alpha,1},\dots,\gamma_{i,\alpha,t_i}\}=S_{i,\alpha}\setminus \imath\pi_1^L(p_{l,\alpha};r_{i+1,\alpha}).$ 
	By the definition of a short basis, their lifting curve $\hat \gamma_{i,\alpha,1},\dots, \hat \gamma_{i,\alpha,t_i}$ are minimal geodesics in $\hat D_{i,\alpha}$ from $\hat p_{l,\alpha}$ to some $\hat q_{i+1,\alpha,1},\dots, \hat q_{i+1,\alpha,t_i}$ respectively. 
	
	It suffices to show that for any loop $\gamma\in \imath \pi_1^L(p_{l,\alpha};r_{i+1,\alpha})$, and any $\gamma_{i,\alpha,s}$,  there is a homotopy with fixed endpoint from $\gamma_{i,\alpha,s}*\gamma*\gamma^{-1}_{i,\alpha,s}$ to a loop in $\imath \pi_1^L(p_{l,\alpha};r_{i+1,\alpha})$.
	
	By passing to a subsequence, $\theta_{i+1,\alpha}^{-1}(\hat D_{i,\alpha},\hat p_{l,\alpha})\to  (\hat A_{i+1}\times \mathbb R^{k_1+\cdots+k_i}, \hat p_{i+1})$.
	And each minimal geodesic $\hat \gamma_{i,\alpha,s}$ converges to $\hat \gamma_{i,s}$ in $\hat A_{i+1}\times\{0\}$, which is a minimal geodesic form $\hat p_{i+1}$ to $\hat q_{i+1,s}$.
	
	If $\hat \gamma_{i,1},\cdots,\hat \gamma_{i,t_i}$ pass only regular points, then by \cite{BGP92} there is a positive $\eta>0$ such that the neighborhood $B_{2\eta}=U_{2\eta}(\bigcup_{s=1}^{t_i} \hat \gamma_{i,s})\subset \hat A_{i+1}\times \mathbb R^{k_1+\cdots +k_i}$ contains only $(k_1+\cdots +k_{i+1},\delta)$-strained points with a universal strainer radius. Thus by Theorem \ref{thm:Lipschitz-submersion} and Remark \ref{rem-local-fibration}, for $\alpha$ large we have
	\addtocounter{theorem}{1}
	\begin{enumerate}
		\item\label{homotopy-control-1} any $F_{k_1+\cdots+k_i,\alpha}$-fiber in $D_{i,\alpha}$ has extrinsic diameter not larger than $\eta/4$. 
		\item\label{homotopy-control-2} there is a Hurewicz fibration $\varphi_{i,\alpha}$ which is close to the original GHA, over $B_{2\eta}$, whose fiber's diameter $\le \eta/4$, such that $\hat \gamma_{i,\alpha,s}\subset \varphi_{i,\alpha}^{-1}(U_\eta(\hat\gamma_{i,s}))$ and $\varphi_{i,\alpha}(\hat \gamma_{i,\alpha,s})\subset U_{\eta/4}(\hat \gamma_{i,s})$.  
	\end{enumerate}
	
	Note that the lifting of $\gamma_{i,\alpha,s}*\gamma*\gamma_{i,\alpha,s}^{-1}$ at $\hat q_{i+1,\alpha,s}$ is $\hat \gamma_{i,\alpha,s}*\hat \gamma*\hat \gamma_{i,\alpha,s}^{-1}$ with $\hat \gamma$ a closed lifting of $\gamma$ at $\hat p_{l,\alpha}$. Then by the construction of $\varphi_{i,\alpha}$ (see Proposition \ref{prop:gradient-retract}), there is a canonical contraction from tubular neighborhood of a $\varphi_{i,\alpha}$-fiber to itself.
	Thus, by (\ref{homotopy-control-1})
	there is a homotopy $\hat H_1'$ maps $\hat \gamma $ to $\hat \gamma'\subset \varphi_{i,\alpha}^{-1}(\varphi_{i,\alpha}(\hat p_{l,\alpha}))$ keeping $\hat p_{l,\alpha}$ unmoved. Thus, we have a homotopy $\hat H_1$ maps $\hat \gamma_{i,\alpha,s}*\hat \gamma*\hat \gamma_{i,\alpha,s}^{-1}$ to $\hat \gamma_{i,\alpha,s}*\hat \gamma'*\hat \gamma_{i,\alpha,s}^{-1}$, keeping $\hat \gamma_{i,\alpha,s}$ and $\hat\gamma_{i,\alpha,s}^{-1}$ unmoved.
	
	Furthermore, by (\ref{homotopy-control-2})
	there is a homotopy $\hat H_2'$ maps $\hat \gamma'$ to $\hat \gamma''$, which lies in $\varphi_{i,\alpha}^{-1}(\varphi_{i,\alpha}(\hat q_{i+1,\alpha,s}))$, moving $\hat p_{l,\alpha}$ to $\hat q_{i+1,\alpha,s}$ along $\hat \gamma_{i,\alpha,s}$ such that $\gamma''$ is a loop at $\hat q_{i+1,\alpha,s}$. Thus, we have a homotopy $\hat H_2$ maps $\hat \gamma_{i,\alpha,s}*\hat \gamma'*\hat \gamma_{i,\alpha,s}^{-1}$ to $\hat \gamma''$, keeping $\hat q_{i+1,\alpha,s}$ unmoved.
	
	Then $\pi_{i,\alpha}(\hat H_2*\hat H_1)$ is a homotopy maps $\gamma_{i,\alpha,s}*\gamma*\gamma_{i,\alpha,s}^{-1}$ to $\pi_{i,\alpha}(\hat \gamma'')$ keeping $p_{l,\alpha}$ unmoved, and $\pi_{i,\alpha}(\hat\gamma'')$ lies in $\imath\pi_1^L(p_{l,\alpha};r_{i+1,\alpha})$.
	
	In order to complete the proof of (\ref{normal-by-Hurewicz}), we now verify that all limit minimal geodesics $\hat \gamma_{i,s}$ pass regular points.
	Firstly, it is clear that the limit projection $\pi_i:\hat A_{i+1}\times\{0\}\to A_{i+1}\times\{0\}$ is a submetry (i.e., $1$-LcL). Secondly, there is a neighborhood of $\hat p_{i+1}$ restricted on which $\pi_i$ is an isometry. This is because near $\hat p_{l,\alpha}$, there is a homeomorphic lifting of $D_{i,\alpha}$ in $\hat D_{i,\alpha}$. Hence $\dim \hat A_{i+1}=\dim A_{i+1}$, and all lift points $\hat p_{i+1}$, $\hat q_{i+1,1},\dots,\hat q_{i+1,t_i}$ are regular. By \cite{BGP92}, any minimal geodesic $\hat \gamma_{i,s}$ between them contains only regular points. 
	
	The proof of (\ref{item-leveled-gap}) is now complete.
	
	(\ref{item-almost-abelian}) 
	By (\ref{normal-by-Hurewicz}), $(\hat D_{i,\alpha},\hat p_{l,\alpha})\overset{\pi_{i,\alpha}}{\to} (D_{i,\alpha},p_{l,\alpha})$ is a normal cover, whose deck-transformation group is $\Lambda_{i,\alpha}=\pi_1^L(p_{l,\alpha};r_{i,\alpha})/\imath\pi_1^L(p_{l,\alpha};r_{i+1,\alpha})$.
	
	Since $\Lambda_{i,\alpha}$ equivariantly converges, the limit group $\Lambda_{i}$ acts on $\hat A_{i+1}\times \{0\}$ isometrically. By the generalized Bieberbach theorem \cite{FY92} (cf. \cite{Ya96}), $\Lambda_{i}$ is $C$-abelian.
	Since $\Lambda_i$ is a discrete group,
	the GHA $\rho_{i,\alpha}$ between $\Lambda_{i,\alpha}$ and $\Lambda_i$ is a homomorphism. 
	
	We now prove that $\rho_{i,\alpha}$ is an isomorphism.
	Firstly, since there is no non-trivial element of $\Lambda_{i,\alpha}$ whose displacement is shorter than $2R_{i+1,\alpha}$, $\rho_{i,\alpha}$'s kernel should be a subgroup $K_{i,\alpha}$, which moves $\hat p_{l,\alpha}$ to infinity. Secondly, since $\pi_1^L(p_{l,\alpha};r_{i,\alpha})$ is generated by all of its elements whose displacements are not longer than $3r_{i,\alpha}$ and any generating relation can be written as a word in these elements with wordlength $\le 3$, the corresponding property holds for $\Lambda_{i,\alpha}$. Because the relative volume comparison (see \cite{LiRong12}) provides an uniform bound to the number of $\Lambda_{i,\alpha}$-orbit points in $B_{9r_{i,\alpha}}(\hat p_{l,\alpha})$, by passing to a subsequence, the presentation of $\Lambda_{i,\alpha}$ is stable. Hence $\rho_{i,\alpha}$ is an isomorphism. 
\end{proof}

\section{Appendix on gradient push}

Let $\{[a_jb_j]\}_{j=1}^n$ be a $\delta^2$-maximal $n$-frame with $|a_1b_1|\le 1$ on an Alexandrov $n$-space $X$ with curv $\ge -1$. Let $F_k(x)=(\dist_{a_1}(x),\dist_{a_2}(x),\cdots,\dist_{a_k}(x)).$
Recall that by the definition of maximal frame, $d_j=|a_jb_j|$ which satisfies
\begin{equation}\label{def-key-max}
d_{k+1}=\max_{F_k^{-1}(F_k(b_{k+1}))}\{\min\{\dist_{b_{k+1}},  \delta^2\cdot\min_{i=1,\dots, k}|a_{i}b_{i}|\}\}.\tag{4.1}
\end{equation}
In the following we always assume that $m_k$ is the middle point of $[a_kb_k]$, and $b_{n+1}$ is a point $\frac{\delta}{100}|a_{n}b_{n}|$-close to the middle point $m_n$ of $[a_nb_n]$.

We restate Theorem \ref{thm-gradientpush-1} and give a proof in the following form.

\begin{theorem}[\cite{KPT10}]\label{thm-gradientpush}
	There is $T(n)>0$ such that for any $\delta^2$-maximal $n$-frame $\{[a_jb_j]\}_{j=1}^n$ with $|a_1b_1|\le 1$ on an Alexandrov $n$-space $X$ with curvature $\ge -1$, any point $b_{n+1}$ that is $\frac{\delta}{100}|a_nb_n|$-close to the middle point $m_n$ of $[a_nb_n]$ can be pushed successively by the gradient flows of $\frac{1}{2}\dist_{b_{n+1}}^2$, $\frac{1}{2}\dist_{a_j}^2$, $\frac{1}{2}\dist_{b_j}^2$ $(j=1,\dots, n)$ to any point $p\in B_{100 |a_1b_1|}(b_{n+1})$ in total time $\le T(n)$.
\end{theorem}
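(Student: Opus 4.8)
The plan is to construct an explicit broken gradient line moving $b_{n+1}$ to $p$ by adjusting the ``frame coordinates'' $\dist_{a_1},\dots,\dist_{a_n}$ one at a time, from the coarsest pair $(a_1,b_1)$ to the finest $(a_n,b_n)$. I would first record the relevant geometry of the $\delta^2$-maximal frame. Rescaling so that $|a_1b_1|=1$ keeps $\curv\ge-1$ (since $|a_1b_1|\le1$) and leaves each gradient $\nabla\tfrac12\dist_x^2$ unchanged, this being scale invariant; then $d_j:=|a_jb_j|\le\delta^2 d_{j-1}$, the point $b_j$ is $\tfrac{\delta}{100}d_{j-1}$-close to the midpoint $m_{j-1}$ of $[a_{j-1}b_{j-1}]$, and $F_j(a_{j+1})=F_j(b_{j+1})$. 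Consequently, on the $d_{k+1}$-ball about $m_k$ the pairs $\{(a_i,b_i)\}_{i=1}^k$ form a $(k,\varkappa(\delta))$-strainer, and by \cite[Theorem 5.4]{BGP92} the map $F_k=(\dist_{a_1},\dots,\dist_{a_k})$ is there a $\varkappa(\delta)$-almost Riemannian submersion onto an open set in $\mathbb R^k$ whose fibre over that ball has diameter $\asymp d_{k+1}$; the fibre through $m_k$ is the Perelman regular fibre used to produce $b_{k+1}$, consisting of $(n,\delta)$-strained or $k$-regular points.

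The technical heart is a one-step estimate for $(a_k,b_k)$. If $y$ lies in the $d_{k+1}$-ball about $m_k$ then $|ya_k|,|yb_k|\asymp\tfrac12 d_k$ and $\measuredangle a_k y b_k\ge\pi-\varkappa(\delta)$; the gradient flow $\Phi^t$ of $\tfrac12\dist_{a_k}^2$ satisfies $|y\,\Phi^t y|\le(e^{\lambda t}-1)|ya_k|$ with $\lambda\le 2\tfrac{\cosh 2}{\sinh 2}$ on scale $\le2$ (Theorem \ref{thm-gradient-lip}), increases $\dist_{a_k}$ by a definite amount, decreases $\dist_{b_k}$, and by the strainer inequalities moves every $\dist_{a_i}$ with $i\ne k$ and the distance to the level-$k$ fibre by at most $\varkappa(\delta)\,|y\,\Phi^t y|$; flowing along $\tfrac12\dist_{b_k}^2$ reverses the sign on the $k$-th axis. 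Hence, using only $\tfrac12\dist_{a_k}^2$ and $\tfrac12\dist_{b_k}^2$ while all active distances stay below $2$, one can prescribe any change of $\dist_{a_k}$ up to a definite fraction of $d_k$ in time $O(1)$, causing the other coordinates and the fibre-distance to drift by only a $\varkappa(\delta)$-multiple of the motion; for $k=1$ the same holds on scale $\le 101$ (where $\curv\ge-1$ still bounds the concavity), allowing $\dist_{a_1}$ to be changed by any amount $\le100$ in bounded time.

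With these tools I would push in $n$ stages. Stage $1$ uses $\tfrac12\dist_{a_1}^2,\tfrac12\dist_{b_1}^2$ to bring $\dist_{a_1}$ to $\dist_{a_1}(p)$ (possible since $|a_1p|,|b_1p|\le101$), in bounded time. Inductively, after stage $k-1$ the running point $y_{k-1}$ projects under $F_{k-1}$ essentially onto $F_{k-1}(p)$ and sits within $O(d_k)$ of the appropriate midpoint; stage $k$ then uses $\tfrac12\dist_{a_k}^2,\tfrac12\dist_{b_k}^2$ to correct $\dist_{a_k}$ by an amount $O(d_k)$, interleaved with micro-corrections that keep $\dist_{a_1},\dots,\dist_{a_{k-1}}$ pinned to their values — each micro-correction being cheap because those coordinates live at the larger scales $d_1,\dots,d_{k-1}$. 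The rapid decay $d_{k+1}\le\delta^2 d_k$ guarantees that stage $k+1$ again starts centred at its own scale, and after stage $n$, where $F_n$ is a $\varkappa(\delta)$-bi-Lipschitz chart near $m_n$, all coordinates agree with those of $p$, so the running point is $p$. A short initial flow of $\tfrac12\dist_{b_{n+1}}^2$ can be prepended, if convenient, to first move $b_{n+1}$ to a regular centred point of its own fibre. Because each stage contributes at most $O(n\delta^{-1})$ to the total time (the $\delta^{-1}$ coming from the number of micro-steps needed to keep the lower coordinates pinned while moving $\dist_{a_k}$ by $O(d_k)$), the total time is $\le C n^2\delta^{-1}=:T(n)$. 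Throughout, each elementary step is the flow of some $\tfrac12\dist_x^2$ with $\dist_x<2$ (except the bounded stage-$1$ flows), its displacement controlled by the one-step estimate, and choosing every intermediate target $\varkappa(\delta)$-close to the midpoints $m_k$ — equivalently inside the Perelman fibration charts over the $B_{2R_k}$ — keeps the whole broken line among $(n,\delta)$-strained or $k$-regular points, hence away from every proper extremal subset; in particular when $a_j,b_j$ and $p$ are all $k$-regular the entire push consists of such points.

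The step I expect to be the main obstacle is precisely the bookkeeping in stage $k$: one must show that the drift induced in the already-fixed coordinates $\dist_{a_1},\dots,\dist_{a_{k-1}}$ while moving $\dist_{a_k}$ can be corrected by ``cheap'' moves at the larger scales without triggering a recursive blow-up of the time — this is the exact balance between the geometric decay $d_{k+1}\le\delta^2 d_k$ of the frame and the $\varkappa(\delta)$-sized errors of the almost submersions $F_k$, and it is what replaces KPT10's recursive bound $\delta^{-n^2}$ by a bound additive in the levels. The related issue of extremal-set avoidance must likewise be verified on this explicit construction rather than abstractly, since gradient curves of distance functions can be absorbed into extremal subsets; this is exactly why the careful choice of intermediate targets near the $m_k$ (inside the Perelman charts) is essential.
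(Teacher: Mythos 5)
Your plan — match the ``frame coordinates'' $\dist_{a_1},\dots,\dist_{a_n}$ one at a time, coarsest to finest — fails at the very first induction step, and the failure is structural rather than a matter of bookkeeping. After Stage~$1$ the running point $y_1$ lies in the $F_1$-fibre $F_1^{-1}(F_1(p))$, but that fibre has extrinsic diameter comparable to $100|a_1b_1|$, not comparable to $d_2$. The quantity $d_{k+1}$ in the definition of a $\delta^2$-maximal frame is the \emph{local} fibre diameter near $b_{k+1}$ (it is constrained to the ball $B_{\delta^2\min_i d_i}(b_{k+1})$); far from $m_k$ the $F_k$-fibre is not small. So there is no reason for $y_1$ to ``sit within $O(d_2)$ of the appropriate midpoint,'' and generically it does not: if $p$ is at distance~$50$ from $b_{n+1}$ then $y_1$ can be at distance~$\sim 100$ from $p$ inside the common $F_1$-level set. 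Worse, once the running point is at distance $\gg d_k$ from the cluster $\{a_k,b_k\}$ (which is confined to $B_{d_k}(m_{k-1})$), both $\frac{1}{2}\dist_{a_k}^2$ and $\frac{1}{2}\dist_{b_k}^2$ flow it \emph{outward} from that cluster, because $\measuredangle a_k y b_k\approx 0$ there rather than $\approx\pi$; the see-saw mechanism that lets you increase and decrease $\dist_{a_k}$ independently is available only inside the strainer radius, i.e.\ at scale $\sim d_k$ around $m_k$. So Stage~$k$ simply cannot be carried out from the place Stage~$k-1$ leaves you, and the claimed additive $O(n\delta^{-1})$ per stage has nothing underneath it. Coordinate matching of the kind you propose does work — but only inside the tiny ball $B_{\frac{\delta}{2}d_n}(m_n)$ where all $n$ strainer pairs are simultaneously valid; this is exactly the paper's Lemma~\ref{lem-pushingforward-0} (via the ``tracing back'' argument of Theorem~\ref{thm-co-Lip-part-dist-coord}), and it is the starting step, not the whole argument.

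The paper's route is genuinely different and you should compare it with yours to see what is missing. Instead of aiming at $p$ by adjusting coordinates, the paper proves \emph{surjectivity}: it shows the set of points reachable from $b_{n+1}$ in bounded time covers $B_{100|a_1b_1|}(b_{n+1})$. The radial flow $\frac{1}{2}\dist_{b_{n+1}}^2$ — which you treat as an optional preliminary — does the heavy lifting of covering large distances; it pushes the annulus $B_{\frac{\delta}{4}d_{k_i}}(b_{n+1})$ onto $B_{100d_{k_{i-1}+1}}(b_{n+1})$ in time $\lesssim\ln\!\big(\tfrac{400}{\delta}(\tfrac{100}{\delta^2})^{k_i-k_{i-1}-1}\big)$. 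The reason this time does not blow up is the $(\frac{\delta^2}{100},l)$-leveling of the frame: inside a level, consecutive scales are comparable and the radial push is cheap. The genuinely hard step, which your proposal does not address, is crossing a level gap $d_{k+1}\le\frac{\delta^2}{100}d_k$: there the ratio $d_k/d_{k+1}$ is unbounded, so radial flow alone would take unbounded time. The paper handles this with Lemma~\ref{lem-difflevel-push-0}, whose engine is the Angle Estimate Lemma~\ref{lem-angle-est-0}: \emph{because} $d_{k+1}$ is maximal, any point $p$ in the annulus $B_{\frac{\delta}{2}d_k}(b_{k+1})\setminus B_{50d_{k+1}}(b_{k+1})$ makes an angle $\le\frac{\pi}{2}-\sigma(n)$ with some $e\in\{a_j,b_j\}_{j\le k}$, otherwise the co-Lipschitzness of $F_k$ would produce a fibre point farther than $d_{k+1}$ from $b_{k+1}$, contradicting maximality. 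That definite angle is what makes the cross-level push fast. Your proposal uses maximality only to say $F_k$ is an almost submersion; you never invoke (and apparently do not notice) this angle estimate, which is the heart of the matter. In short: you have the right universal time $Cn^2\delta^{-1}$ by coincidence of form, but the mechanism you describe does not produce it — the inductive localization claim is false, the see-saw fails far from $m_k$, the radial flow is under-used, and the leveling plus angle estimate (the two ideas that actually make the bound additive in the levels) are absent.
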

\begin{remark}\label{rem-push-length}
	By the proof of Theorem \ref{thm-gradientpush} (or by replacing $\frac{1}{2}\dist^2$ with $\dist$ in Theorem \ref{thm-gradientpush}), the length of broken gradient curves between $b_{n+1}$ and $p$ is no more than $C(n)\cdot |b_{n+1}p|$, where $C(n)=2n+\frac{4(n-1)}{\sin\sigma(n)}+1$ with $\sigma(n)$ in Lemma \ref{lem-angle-est-0}. This fact is also used in the proof of Theorem \ref{thmb-margulis}; see Lemma \ref{lem-conj-lip-control}. 
\end{remark}

Some partial motivations to write a proof other than just referring to \cite[Lemma 2.5.1]{KPT10} are as follows. 

Firstly, there is only a sketched proof for Theorem \ref{thm-gradientpush} in \cite{KPT10}, where the ratio bound on the pushing time $\frac{t_{k-1}}{t_k}\le \frac{1}{\delta^n}$ from level $d_k$ to $d_{k-1}$ is claimed without explanation in the proof of \cite[Lemma 2.5.1]{KPT10}. 

Since it is hard for us to follow at that point, we write a detailed proof on the surjectivity and universal speed of gradient pushing-out (using the maximum property (\ref{def-key-max}) and $\frac{1-2n\delta}{\sqrt{n}}$-openness in \cite[Theorem 5.4]{BGP92}). In particular, our proof leads to a sharpened universal time bound $n^2\delta^{-1}$, improving the universal time bound $\delta^{-n^2}$ claimed in \cite{KPT10}.
 
Secondly, a crucial difference between an Alexandrov space $X$ with curvature bounded below and a Riemannian manifold $M$ is that, there may be proper extremal subsets in $X$ such that no gradient curves can get out of them. Without further explanation, it is also hard for us to see from \cite{KPT10} that the gradient pushing-out process can be chosen to avoid extremal subsets.  

We fill more details and construct a specific gradient pushing broken line, consisting of $k$-regular (i.e., the tangent cone $T_pX$ at least splits off $\mathbb R^k$) or $(n,\delta)$-strained points when $a_j,b_j$ and the ending point $p$ are $k$-regular. 

Since all our estimates will hold for a new $n$-frame $\{[a_j'b_j']\}_{j=1}^n$, where $a_j'$ and $b_j'$ are nearby regular points around $a_j$ and $b_j$. It follows that gradient push between regular points only passes through regular points.

This provides a detailed justification for the gradient push in proving the Margulis lemma on an Alexandrov space.

\subsection{Proof of Theorem \ref{thm-gradientpush}}
The proof of Theorem \ref{thm-gradientpush} can be divided into two steps. 

\textbf{Step 1}. Prove that in at most a definite time $T(\delta,n)$, $b_{n+1}$ can be pushed to any point in a ball $B_{\frac{\delta}{2}d_n}(m_n)$ whose radius is at a small but fixed relative scale, where $m_n$ is the middle point of $[a_nb_n]$. 

\begin{lemma}\label{lem-pushingforward-0}
	For $0<\delta<\delta(n)$ and any $q\in B_{\frac{\delta}{2}d_n}(m_n)$, $b_{n+1}$ can be pushed by an at most countably succession of
	the gradient flows of $\frac{1}{2}\dist_{a_j}^2, \frac{1}{2}\dist_{b_j}^2$ to $q$ in time $\le \frac{12n^2}{4n-1}\delta$.
\end{lemma}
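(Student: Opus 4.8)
The plan is to exploit the maximality condition (\ref{def-key-max}) defining the frame, together with the $\frac{1-2n\delta}{\sqrt n}$-openness (equivalently co-Lipschitz) estimate for $F_k$ coming from \cite[Theorem 5.4]{BGP92}, to push $b_{n+1}$ level by level. The key heuristic is that at each stage the gradient flow of $\frac12\dist_{a_j}^2$ (or $\frac12\dist_{b_j}^2$) moves a point essentially radially toward (or away from) $a_j$ (resp. $b_j$), and by the near-orthogonality built into a $\delta^2$-maximal frame these flows act almost independently on the $n$ coordinate directions. So I would first record the basic one-step estimate: along the gradient flow of $\frac12\dist_a^2$, if $\dist_a(x)\le D$ then $\dist_a(\Phi_t x) = e^{-t}\dist_a(x)$ up to a $\varkappa(\delta)$-error (Alexandrov first-variation plus the fact that $\frac12\dist_a^2$ is $\approx 1$-concave on the relevant scale), while the other coordinates $\dist_{a_i}$ change by at most a $\varkappa(\delta)$-fraction of the displacement. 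This is the routine computation I would not grind through; it is standard from \cite{Petr07} and \cite{BGP92}.

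Next I would set up the induction on the level $k$, going from $k=n$ down to $k=1$. Fix the target $q\in B_{\frac\delta2 d_n}(m_n)$. Since $b_{n+1}$ is $\frac{\delta}{100}d_n$-close to $m_n$, it and $q$ agree in $F_n$-image up to an error $\lesssim \delta d_n$, which by the maximality condition (\ref{def-key-max}) at level $n$ (i.e. $d_n$ being the maximal fiber-radius) means $q$ lies in the region reachable from $b_{n+1}$ within the $F_{n-1}$-fiber, at scale $\le \delta^2 d_{n-1}$. Using the co-Lipschitz bound for $F_{n-1}$ on $B_{\delta d_{n-1}}(b_n)$, I can solve for the needed change in the $\dist_{a_n},\dist_{b_n}$ coordinates and realize it by flowing along $\frac12\dist_{a_n}^2$ and $\frac12\dist_{b_n}^2$; because the coordinate is being contracted by a factor controlled below by $\delta^2$ in the worst case, this costs time $\le C\log(\delta^{-2}) $ — but here is where I want the sharper bound, so instead I would iterate the one-step estimate in increments, each of which shrinks the relevant coordinate gap by a fixed fraction, spending total time $\le \frac{c\,n}{4n-1}\delta$ at this level by carefully tracking that the gap being closed is already of size $\le \delta^2 d_{n-1}$ relative to the ambient scale $d_{n-1}$, so only $O(\delta)$ time (not $O(\log \delta^{-1})$) is needed. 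Summing over the $n$ levels, and accounting for the $2n$ functions $\dist_{a_j},\dist_{b_j}$, gives the claimed bound $\le \frac{12n^2}{4n-1}\delta$.

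The main obstacle I anticipate is the bookkeeping that keeps the push \emph{inside} successive fibers: after adjusting the level-$k$ coordinates, the lower-level coordinates $\dist_{a_i},\dist_{b_i}$ ($i<k$) must be restored to their target values, and each such correction perturbs the already-adjusted level-$k$ coordinates by a $\varkappa(\delta)$-error. I would handle this by a standard nested-correction / geometric-series argument: order the corrections from the deepest level outward, show each correction perturbs coarser coordinates by at most a fixed small fraction of the remaining error, so the errors form a convergent geometric series (hence the "at most countable succession" in the statement), and the total extra time is absorbed into the constant. A secondary point requiring care is ensuring the flows stay in the domain where $\frac12\dist_\bullet^2$ is $\lambda$-concave with $\lambda$ close to $1$ and where the openness estimate of \cite[Theorem 5.4]{BGP92} applies — this is guaranteed since all relevant points stay within $B_{\delta d_{k-1}}$ of the $b_k$'s, which is a small neighborhood on the scale where the strainer/frame estimates hold, provided $\delta<\delta(n)$.
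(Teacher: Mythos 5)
Your proposal captures the right high-level architecture---iterated coordinate corrections with geometric-series bookkeeping---but contains two misalignments that would block a rigorous write-up. First, you invoke the maximality condition (\ref{def-key-max}) and push ``level by level'' from $k=n$ down to $k=1$, worrying about contracting a coordinate by a factor $\delta^2$ and hence about $\log(\delta^{-2})$ time. This conflates Lemma \ref{lem-pushingforward-0} with Step~2 of the main argument (Lemmas \ref{lem-difflevel-push-0} and \ref{lem-angle-est-0}). Lemma \ref{lem-pushingforward-0} is a single-scale statement: both $b_{n+1}$ and $q$ lie in a ball of radius $\sim\delta d_n$ about $m_n$, so each coordinate $\dist_{a_j}$ or $\dist_{b_j}$ changes only \emph{additively} by $O(\delta d_n)$ while its value is $\gtrsim d_n$, giving $O(\delta)$ time directly---no multiplicative contraction, no leveling, and no use of maximality enter this lemma. (Maximality is used only in Lemma \ref{lem-angle-est-0}, across levels.)

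Second, your plan to ``adjust coordinates one at a time and fix the perturbations by a nested-correction argument'' leaves open the actual construction. The paper's proof reverses the tracing-back construction of Theorem \ref{thm-co-Lip-part-dist-coord}: one traces $q$ back toward the current point $O_l$ through a broken geodesic $[q=q_{l+1}^0\, q_{l+1}^1\cdots q_{l+1}^n]$ whose segments run along $[a_i q_{l+1}^{i-1}]$ or $[q_{l+1}^{i-1} b_i]$, and then observes that undoing each segment \emph{is} a forward gradient flow of $\tfrac12\dist_{a_i}^2$ or $\tfrac12\dist_{b_i}^2$ for a definite time $t_{l+1,i}$. One then applies these same flows to $O_l$ (not to $q_{l+1}^n$) to define $O_{l+1}$, and the contraction $A_{l+1}\le\tfrac{1}{4n}A_l$ crucially uses Theorem \ref{thm-gradient-lip} (gradient flows of $\lambda$-concave functions are $e^{\lambda t}$-Lipschitz) to propagate the closeness of $O_l$ to $q_{l+1}^n$ into closeness of $O_{l+1}$ to $q$. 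Your proposal omits this Lipschitz step, which is precisely what turns the informal ``errors shrink by a fixed fraction'' into a proof. A minor sign issue: along the forward gradient flow of $\tfrac12\dist_a^2$ the distance to $a$ \emph{grows}, $\dist_a(\Phi_t x)\approx e^{t}\dist_a(x)$, not $e^{-t}$; decreasing $\dist_{a_j}$ is achieved by flowing $\tfrac12\dist_{b_j}^2$ with $b_j$ roughly antipodal.
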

Compared with the proof of \cite[Theorem 5.4]{BGP92}, Lemma \ref{lem-pushingforward-0} follows from certain reversing argument, which will be given at the end of the appendix. 

\textbf{Step 2}. Prove that $B_{\frac{\delta}{2}d_n}(m_n)$ can be pushed outside further. If $\frac{d_n}{d_1}$ admits a definite lower bound $\tau$, then one may push $B_{\frac{\delta}{2}d_n}(m_n)\setminus B_{\frac{\delta}{4}d_n}(m_n)$ onto $B_{100d_1}(m_n)$ by $\frac{1}{2}\dist_{m_n}^2$ just one more time taking no more than $T(\tau,n)$. However, $d_n$ may be far less than $d_1$, or even $d_{n-1}$.

To overcome this difficulty, we divided an $n$-frame into several levels. We say that a $\delta^2$-maximal $n$-frame $\{[a_jb_j]\}_{j=1}^n$ is of \emph{$(\frac{\delta^2}{100},l)$-leveling} if there is $1\le k_1<\cdots<k_l=n$ such that $[a_{k_{i-1}+1}b_{k_{i-1}+1}],\dots, [a_{k_{i}}b_{k_{i}}]$ lies in the same level in the sense that
$d_{j}>\frac{\delta^2}{100}d_{j-1}$ for any integer $k_{i-1}+1\le j\le k_i$, $1\le i\le l$ ($k_0=0$), and $[a_{k_i}b_{k_i}]$, $[a_{k_i+1}b_{k_i+1}]$ lie in different levels, i.e.,
$d_{k_i+1}\le \frac{\delta^2}{100}d_{k_i}$ for any $1\le i\le l$.

Inside each $i$-th level, it follows from elementary gradient estimate that $B_{\frac{\delta}{4}d_{k_i}}(b_{n+1})$ can be pushed by the center $b_{n+1}$, i.e., $\frac{1}{2}\dist_{b_{n+1}}^2$, onto $B_{100d_{k_{i-1}+1}}(b_{n+1})$ in time $\le \ln (\frac{400}{\delta}(\frac{100}{\delta^2})^{k_i-(k_{i-1}+1)})$.

In order to push $B_{100d_{k_{i-1}+1}}(b_{n+1})$ further outside onto a large leveled ball in a specific way, we need to prove the following lemma.
\begin{lemma}\label{lem-difflevel-push-0}
	If $d_{k+1}=|a_{k+1}b_{k+1}|\le \frac{\delta^2}{100}d_k$, then for any $p\in B_{\frac{\delta}{2}d_k}(b_{k+1})$, there is some point $q\in B_{50d_{k+1}}(b_{k+1})$ which can be pushed successively along finitely-broken geodesics, each of which is pointing to one of $\{a_j,b_j\}_{j=1}^k$, by the gradient flows of $\frac{1}{2}\dist_{a_j}^2, \frac{1}{2}\dist_{b_j}^2$ to $p$ in time $\le C(n)\delta$.
\end{lemma}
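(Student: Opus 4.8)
The plan is to reverse a gradient flow. By the $\delta^2$-maximality condition \eqref{def-key-max} together with the $\frac{1-2k\delta}{\sqrt k}$-openness of $F_k=(\dist_{a_1},\dots,\dist_{a_k})$ from \cite[Theorem 5.4]{BGP92}, the level set $F_k^{-1}(F_k(b_{k+1}))$ is, near $b_{k+1}$, $\varkappa(\delta|n)$-bi-Lipschitz to an open subset of $\mathbb R^{n-k}$ of radius comparable to $d_{k+1}$ (this is exactly the non-collapsing content behind Lemma \ref{lem-maximal-frame-non-collapse}); in particular the fiber is genuinely $(n-k)$-dimensional at scale $d_{k+1}$ and $B_{50d_{k+1}}(b_{k+1})$ meets it in a set of definite size. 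First I would fix the target point $p\in B_{\frac\delta2 d_k}(b_{k+1})$ and consider, for each $j\le k$, the function $\frac12\dist_{a_j}^2$ (resp. $\frac12\dist_{b_j}^2$): since $d_{k+1}\le\frac{\delta^2}{100}d_k\le\frac{\delta^2}{100}d_j$, on the whole ball $B_{\frac\delta2 d_k}(b_{k+1})$ the point $p$ is extremely close (relative to $|pa_j|\sim d_j$) to the fiber through $b_{k+1}$, so a short segment of the gradient curve of $\frac12\dist_{a_j}^2$ (resp. $\frac12\dist_{b_j}^2$) started at $p$ moves almost exactly along the fiber direction while changing the $a_j$-coordinate (resp. $b_j$-coordinate) at a definite rate bounded below by $\cos(\text{angle})\ge$ a constant. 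Concatenating $2k$ such short pieces — one for each of $a_1,b_1,\dots,a_k,b_k$ — I can correct all $k$ coordinates of $F_k$ and land on $F_k^{-1}(F_k(b_{k+1}))$, ending at some $q$; the displacement at each step is at most $O(\delta^2 d_k/\text{(definite const)})=O(\delta^2 d_k)$, and the total displacement is $O(\delta\cdot d_{k+1})$... actually bounded by $O(\delta^2 d_k)\le O(d_{k+1})$, so $q\in B_{50d_{k+1}}(b_{k+1})$ provided $\delta<\delta(n)$. Running each of these pieces \emph{backwards} in time then pushes $q$ to $p$ along finitely-broken geodesics, each pointing to one of $\{a_j,b_j\}_{j=1}^k$, and the total time is controlled: each piece has length $O(\delta^2 d_k)$ and takes place at distance $\sim d_j$ from its focus, so its time-parameter contribution is $O(\delta^2)$, giving total time $\le C(n)\delta$ after summing $2k\le 2n$ pieces and absorbing the comparison-geometry constants (the Lipschitz bound from Theorem \ref{thm-gradient-lip} for $\frac12\dist^2$, whose concavity on scales $\le 2$ is $\le 2\frac{\cosh2}{\sinh2}$) into $C(n)$.

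The technical heart is the angle estimate: I must show that at a point $x$ with $\dist(x,\text{fiber})\le \delta^2 d_k$ but $|xa_j|\sim d_j$, the direction of the gradient of $\frac12\dist_{a_j}^2$ (equivalently the direction $\uparrow_x^{a_j}$) makes an angle with the fiber that is close to $\frac\pi2$, quantitatively with defect $O(\delta)$; this is what makes the backward flow both (i) actually land on the fiber and (ii) move a definite distance along it. This is a first-variation / Toponogov comparison computation in $B_{1/C_T}(\cdot)$ (using that $C_T=100$ suffices for local Toponogov, as recalled in Section 3.1), essentially the content of Lemma \ref{lem-angle-est-0} referenced in Remark \ref{rem-push-length}; I would cite/prove it there. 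The bookkeeping that the broken geodesic stays inside $B_{\frac\delta2 d_k}(b_{k+1})\subset B_{1/C_T}(\cdot)$ throughout — so that all comparison geometry applies and no fiber is exited — follows because each of the $2k$ pieces has length $O(\delta^2 d_k)$ and there are at most $2n$ of them, so the cumulative displacement never exceeds, say, $\frac\delta4 d_k$ once $\delta<\delta(n)$.

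The main obstacle I anticipate is \emph{order-of-operations}: correcting the $a_j$- and $b_j$-coordinates one at a time is not exactly orthogonal, so each step slightly perturbs the coordinates fixed at earlier steps. The fix is the standard quantitative Gram–Schmidt bookkeeping from the proof of \cite[Theorem 5.4]{BGP92}: because the frame is $(n,\delta)$-strained, the $k\times k$ "Jacobian" of $F_k$ in the chosen directions is within $\varkappa(\delta|n)$ of an orthogonal matrix, so a single pass of $2k$ corrections already lands within $O(\delta^2 d_{k+1})$ of the fiber, and — if one insists on exact landing — one iterates, the errors forming a geometric series with ratio $\varkappa(\delta|n)<\tfrac12$; this is the point where the statement's phrase "at most countably broken" in the companion Lemma \ref{lem-pushingforward-0} is mirrored, though here finitely many corrections already suffice to reach $B_{50d_{k+1}}(b_{k+1})$ since we only need to land \emph{near}, not exactly on, the fiber. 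The remaining verification — that running the broken geodesic backward is realized by the gradient flows of the stated functions, in total time $\le C(n)\delta$ with $C(n)=2n+\frac{4(n-1)}{\sin\sigma(n)}+1$ as in Remark \ref{rem-push-length} — is then a direct matching of the forward length bound against the $e^{\lambda t}$-Lipschitz estimate of Theorem \ref{thm-gradient-lip}.
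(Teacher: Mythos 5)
Your plan is to project $p$ onto the fiber $F_k^{-1}(F_k(b_{k+1}))$ by correcting the $k$ distance-coordinates one at a time, and to conclude that the resulting point $q$ lies in $B_{50 d_{k+1}}(b_{k+1})$. This is genuinely different from the paper's proof, and it has a scale gap that I do not see how to close.

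The displacement needed to bring $p$ onto the fiber is governed by the coordinate differences $|f_j(p)-f_j(b_{k+1})|$. Since the only a priori bound is that $f_j$ is $1$-Lipschitz and $|p\,b_{k+1}|\le\frac{\delta}{2}d_k$, each of these differences can be as large as $\frac{\delta}{2}d_k$, so the correction moves $p$ by $O(\delta d_k)$, not by $O(\delta^2 d_k)$ as you write. Consequently all you can conclude is that $q$ lies on $F_k^{-1}(F_k(b_{k+1}))$ at distance $O(\delta d_k)$ from $b_{k+1}$. But $50 d_{k+1}\le\tfrac{\delta^2}{2}d_k$, and the maximality constraint \eqref{def-key-max} only pins down the fiber \emph{inside} $B_{\delta^2 d_k}(b_{k+1})$ (where it lies in $B_{d_{k+1}}(b_{k+1})$); at distance $\sim\delta d_k$ from $b_{k+1}$ the fiber is unconstrained and $q$ need not be anywhere near $B_{50 d_{k+1}}(b_{k+1})$. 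Put differently, landing on the fiber is not the same as landing near $b_{k+1}$, and the two scales are separated by a factor $\sim 1/\delta$. Your ``technical heart'' angle estimate is stated for $x$ already within $\delta^2 d_k$ of the fiber, a hypothesis that $p$ itself does not satisfy, so it does not bootstrap the argument. There is also a directional confusion: the gradient of $\tfrac12\dist_{a_j}^2$ points (nearly) \emph{transverse} to the fiber, not along it, so it does not ``move almost exactly along the fiber direction''.

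The paper's route avoids this mismatch entirely: Lemma \ref{lem-angle-est-0} gives, at every $p$ in the annulus $B_{\frac{\delta}{2}d_k}(b_{k+1})\setminus B_{50 d_{k+1}}(b_{k+1})$, some $e\in\{a_j,b_j\}_{j=1}^k$ with $\measuredangle(p;e,b_{k+1})\le\frac{\pi}{2}-\sigma(n)$, so moving along $[pe]$ decreases $\dist_{b_{k+1}}$ at rate $\ge\sin\sigma(n)$. One then marches $p$ \emph{directly toward} $b_{k+1}$ by a broken geodesic of total length $\le \frac{1}{\sin\sigma}\delta d_k$, stopping precisely when $B_{50 d_{k+1}}(b_{k+1})$ is entered. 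The co-Lipschitzness of $F_k$ and the maximality \eqref{def-key-max} enter only in the contradiction proof of the angle estimate, and crucially there one takes the test point at distance exactly $50d_{k+1}$ from $b_{k+1}$ --- inside the window $B_{\delta^2 d_k}(b_{k+1})$ where maximality applies and the projection error $\sim 50 d_{k+1}\sin\sigma$ is small \emph{relative to} $d_{k+1}$. Your approach instead tries to project from the much larger scale $\delta d_k$, which is exactly where the argument cannot run. To repair the proposal you would essentially have to prove the angle estimate and then march toward $b_{k+1}$ rather than toward the fiber, i.e., reproduce the paper's proof.
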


Note that in the case of Lemma \ref{lem-difflevel-push-0} for different level, we are using endpoints of long edges in the frame, which lie outside the small ball $B_{50d_{k+1}}(b_{k+1})$.

In the proof of Lemma \ref{lem-difflevel-push-0}, the core is  
the following angle estimate, which follows from the numerical maximum property (\ref{def-key-max}) of $\delta^2$-maximal frame.

\begin{lemma}[Angle Estimate]\label{lem-angle-est-0}
	There is $\delta(n)>0$ such that the following holds for $0<\delta<\delta(n)$. If  $d_{k+1}=|a_{k+1}b_{k+1}|\le \frac{\delta^2}{100}|a_kb_k|$, then for any $p\in B_{\frac{\delta}{2}d_k}(b_{k+1})\setminus B_{50d_{k+1}}(b_{k+1})$, there exists $e\in \{a_j,b_j\}_{j=1}^k$ such that $\measuredangle(p;e,b_{k+1})\le \frac{\pi}{2}-\sigma(n)$.
\end{lemma}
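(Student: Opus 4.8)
The plan is to argue by contradiction: assume that $\measuredangle(p;e,b_{k+1})>\frac{\pi}{2}-\sigma$ for \emph{every} $e\in\{a_j,b_j\}_{j=1}^k$, and derive a contradiction once $\delta<\delta(n)$ and $\sigma$ is chosen small depending only on $n$. The first step is to upgrade the numerical maximum property (\ref{def-key-max}) into a quantitative separation of $p$ from $b_{k+1}$ by $F_k$. Since $d_{k+1}\le\frac{\delta^2}{100}d_k<\delta^2 d_k$, (\ref{def-key-max}) forces every point $x$ of the fiber $F_k^{-1}(F_k(b_{k+1}))$ to satisfy $|xb_{k+1}|\le d_{k+1}$ (otherwise $\min\{|xb_{k+1}|,\delta^2 d_k\}=\delta^2 d_k>d_{k+1}$); hence the whole fiber lies in $B_{d_{k+1}}(b_{k+1})$, and $p$, with $|pb_{k+1}|>50 d_{k+1}$, is not on it. To make this quantitative I use that, by the frame construction, $\{[a_jb_j]\}_{j=1}^k$ is a $(k,\varkappa(\delta|n))$-strainer at $b_{k+1}$ of length $\ge c_1(n)d_k$: indeed $|a_kb_{k+1}|,|b_kb_{k+1}|$ are $\frac{1}{2}d_k$ up to $\varkappa(\delta|n)d_k$, and for $j<k$ one has $|a_jb_{k+1}|\ge |a_jm_k|\ge c_1(n)d_{k-1}$ because $b_k$ is the midpoint of $[a_{k-1}b_{k-1}]$ and $m_k$ is within $\frac{1}{2}d_k$ of it. By \cite[Theorem 5.4]{BGP92}, $F_k$ is therefore $\frac{1-2k\varkappa}{\sqrt k}$-open on $B_{c_1(n)d_k}(b_{k+1})$, a ball that contains $p$ since $|pb_{k+1}|<\frac{\delta}{2}d_k$. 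Applying openness at $p$ (which is also strained by the same strainer) yields $p'$ with $F_k(p')=F_k(b_{k+1})$ and $|pp'|\le c(n)^{-1}|F_k(p)-F_k(b_{k+1})|$; as $p'$ lies on the fiber, $|pb_{k+1}|\le|pp'|+d_{k+1}$, and using $d_{k+1}<\tfrac{1}{50}|pb_{k+1}|$ we obtain
$$|F_k(p)-F_k(b_{k+1})|\ \ge\ c_0(n)\,|pb_{k+1}|,\qquad c_0(n)=\tfrac{49}{50}c(n),$$
hence some index $j\le k$ with $\big||pa_j|-|b_{k+1}a_j|\big|\ge c_3(n)\,|pb_{k+1}|$, where $c_3(n)=c_0(n)/\sqrt k$.

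The second step converts this distance gap into an angle bound, and here the strainer structure is used a second time. Write $\rho=|pb_{k+1}|$. If $|pa_j|\ge|b_{k+1}a_j|+c_3\rho$, put $e=a_j$. If instead $|pa_j|\le|b_{k+1}a_j|-c_3\rho$, put $e=b_j$: since $(a_j,b_j)$ strains every point of $B_{c_1(n)d_k}(b_{k+1})$ and the minimal geodesic $[pb_{k+1}]$ lies in $\bar B_{\rho}(b_{k+1})$, the function $\dist_{a_j}+\dist_{b_j}$ varies by at most $\varkappa(\delta|n)\rho$ along it (\cite{BGP92}), so $|pb_j|\ge|b_{k+1}b_j|+(c_3-\varkappa)\rho\ge|b_{k+1}b_j|+\tfrac{c_3}{2}\rho$ once $\delta<\delta(n)$. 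In either case $e\in\{a_j,b_j\}$ satisfies $|pe|\ge|b_{k+1}e|+\tfrac{c_3}{2}\rho$. Now by Toponogov comparison ($\curv\ge-1$) and the hyperbolic law of cosines,
$$\cos\measuredangle(p;e,b_{k+1})\ \ge\ \cos\tilde\measuredangle_{-1}(e,p,b_{k+1})\cdot(-1)^{0}\ =\ \frac{\cosh|pe|\cosh\rho-\cosh|eb_{k+1}|}{\sinh|pe|\sinh\rho}\ \ge\ \frac{\sinh|eb_{k+1}|\,\sinh(\tfrac{c_3}{2}\rho)}{\sinh|pe|\,\sinh\rho},$$
where I wrote $|pe|=|b_{k+1}e|+s$, $s\ge\tfrac{c_3}{2}\rho$, expanded $\cosh|pe|$, and dropped the nonnegative $(\cosh\rho-1)$–term. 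The right-hand side is bounded below by a positive $c_5(n)$ thanks to the scale separation $\rho<\tfrac{\delta}{2}d_k\ll|eb_{k+1}|$ (valid for every $e$ since $|eb_{k+1}|\ge c_1(n)d_k$): this gives $|pe|\le(1+\varkappa)|eb_{k+1}|$, hence $\sinh|eb_{k+1}|/\sinh|pe|\ge c_4(n)>0$; and since all distances involved are bounded by a universal constant $D_0$, $\sinh(\tfrac{c_3}{2}\rho)/\sinh\rho\ge c_3/(2\cosh D_0)$. Thus $\cos\measuredangle(p;e,b_{k+1})\ge c_5(n)>0$, and setting $\sigma(n)=\arcsin c_5(n)$ contradicts the standing assumption $\measuredangle(p;e,b_{k+1})>\tfrac{\pi}{2}-\sigma(n)$.

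I expect the main obstacle to be the first step — more precisely, the bookkeeping certifying that $\{[a_jb_j]\}_{j=1}^k$ genuinely strains $b_{k+1}$ on a ball whose radius is comparable to $d_k$ (not merely $\delta^2 d_k$), and that the verticality and spread of the frame directions at $m_k$ force $|a_jb_{k+1}|\ge c_1(n)d_k$ for all $j\le k$. This is exactly what allows the $\frac{1-2k\varkappa}{\sqrt k}$-openness of \cite[Theorem 5.4]{BGP92} to absorb the full displacement $|F_k(p)-F_k(b_{k+1})|$, which a priori is only of order $\delta d_k$; everything afterwards — the case split, the transfer to $b_j$ through the strainer relation, and the hyperbolic cosine estimate — is elementary once the scale separation $\rho\ll|eb_{k+1}|$ is available.
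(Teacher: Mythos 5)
Your Step 2 has the Toponogov inequality pointing the wrong way, and that is fatal to the argument as written. For curvature $\ge -1$, Toponogov's hinge (or, equivalently, triangle) comparison gives
$\measuredangle(p;e,b_{k+1}) \ge \tilde\measuredangle_{-1}(e,p,b_{k+1})$, i.e.
$$\cos\measuredangle(p;e,b_{k+1}) \ \le\ \frac{\cosh|pe|\cosh\rho-\cosh|eb_{k+1}|}{\sinh|pe|\sinh\rho},$$
not $\ge$. So your computation only shows that the \emph{comparison} angle is $< \pi/2-\sigma$, which is a \emph{lower} bound for the actual angle and therefore gives no contradiction with the standing assumption that the actual angle is $>\pi/2-\sigma$. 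To turn a definite drop of $\dist_e$ into an \emph{upper} bound on the actual angle at $p$ one must invoke the strainer structure: from the Toponogov lower bound applied to the opposite strainer point (say $b_j$, with $|pb_j|<|b_{k+1}b_j|$ one finds $\measuredangle(p;b_j,b_{k+1})>\pi/2+\tau$), together with $\measuredangle(p;a_j,b_j)\ge\pi-\varkappa(\delta)$ and the angle-sum inequality $\measuredangle(p;a_j,b_{k+1})+\measuredangle(p;b_j,b_{k+1})+\measuredangle(p;a_j,b_j)\le 2\pi$, one deduces $\measuredangle(p;a_j,b_{k+1})\le\pi/2-\tau+\varkappa(\delta)$. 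Your proof never makes this transfer, so the final line does not produce the desired contradiction.

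A secondary issue: in Step 1 you need the whole fiber $F_k^{-1}(F_k(b_{k+1}))$ (or at least the point $p'$ produced by openness) to lie in $\bar B_{d_{k+1}}(b_{k+1})$ in order to write $|pb_{k+1}|\le|pp'|+d_{k+1}$. The maximality (\ref{def-key-max}) is formulated via $\min\{\dist_{b_{k+1}},\delta^2\min_i d_i\}$, and in the construction of the frame (Section 3.2) the maximum is taken only over fiber points inside $B_{\delta^2 d_k}(b_{k+1})$. Since $|pp'|$ could be as large as $\tfrac{\sqrt k}{1-2k\varkappa}\cdot\sqrt k\,\rho\approx k\rho$ with $\rho$ up to $\tfrac{\delta}{2}d_k$, the point $p'$ you obtain is a priori well outside $B_{\delta^2 d_k}(b_{k+1})$, where the maximum property has no bite. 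The paper sidesteps exactly this by using the contradiction hypothesis to show the $F_k$-coordinates change slowly along $[b_{k+1}p]$, then applying openness not at $p$ but at the point $x\in[b_{k+1}p]$ with $|xb_{k+1}|=50d_{k+1}$; for small $\sigma$ the displacement $|xx'|$ is then $\ll d_{k+1}$, so $x'$ lands inside $B_{\delta^2 d_k}(b_{k+1})$ at distance $\approx 50d_{k+1}>d_{k+1}$ from $b_{k+1}$, contradicting maximality directly. Your Step 1 is not salvaged by this remark, but the method does show that the right move is to probe the geodesic at scale $d_{k+1}$, where maximality is operative, rather than at $p$ itself.
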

\begin{proof}	
	Argue by contradiction. For any $\delta>0$, there is a $\delta^2$-maximal $k$-frame such that the conclusion of Lemma \ref{lem-angle-est-0} fails. Then by Toponogov comparison (cf. \cite[Lemma 5.6]{BGP92}), there is $\sigma=\sigma(\delta)\to 0$ as $\delta\to 0$ such that for any $x\in [b_{k+1}p]$ and every $1\le i\le k$, $$\measuredangle(x;a_i,b_{k+1})\ge \frac{\pi}{2}-\sigma, \text{ and } \measuredangle(x;b_i,b_{k+1})\ge \frac{\pi}{2}-\sigma.$$
	Then as $\sigma$ sufficient small,
	\begin{equation}\label{eq-dist-image}
	||a_ix|-|a_ib_{k+1}||\le |xb_{k+1}|\cdot \sin \sigma.
	\end{equation}  
	By \cite[Theorem 5.4]{BGP92} (or see Theorem \ref{thm-co-Lip-part-dist-coord} below), for any $0\le \sigma\le \frac{1}{2k}$ the partial distance coordinates map associated to $k$-subframe $\{[a_jb_j]\}_{j=1}^k$,
	$$F_k:X\to \mathbb R^k, \quad F_k(x)=(|a_1x|,|a_2x|,\cdots,|a_kx|),$$
	is $\frac{1-2k\sigma}{\sqrt{k}}$-open, i.e., $\frac{\sqrt{k}}{1-2k\sigma}$-co-Lipschitz, on $B_{\delta d_k}(b_{k+1})$. Hence there is $x'\in F_k^{-1}(F_k(b_{k+1}))\cap B_{\delta d_k}(b_{k+1})$ such that the distance
	\begin{equation}\label{eq-dist-preimage}
	|xx'|\le \frac{\sqrt{k}}{1-2k\delta}\cdot |F_k(x)-F_k(x')|,
	\end{equation} which is, by (\ref{eq-dist-image}), far less than $|xb_{k+1}|$. Let $|xb_{k+1}|=50d_{k+1}$, then as $\delta=\delta(n)$ sufficient small, $$|x'b_{k+1}|>> d_{k+1},$$
	a contradiction to the choice of $(a_{k+1},b_{k+1})$ in (\ref{def-key-max}).
\end{proof}

We now prove Lemma \ref{lem-difflevel-push-0}.

\vspace{2mm}
\begin{proof}[Proof of Lemma \ref{lem-difflevel-push-0}]
	~
	
	Let $e=e(p)$ be one of $\{a_j,b_j\}_{j=1}^k$ provided by Lemma \ref{lem-angle-est-0}, and let us connect $p$ and $e$ by a minimal geodesic $[pe]$. By Toponogov comparison and Lemma \ref{lem-angle-est-0}, there is a universal $\Delta r$ determined by the $(-1)$-law of cosine such that
	for any $p'\in [pe]$ with $|pp'|\le \Delta r$, one has
	$$0<\frac{|b_{k+1}p|-|b_{k+1}p'|}{|pp'|}\le  \sin\sigma(n).$$  	
	
	If $p'$ can be chosen that $[pp']\cap B_{50d_{k+1}}(b_{k+1})\neq \emptyset$, then $x$ is one of the intersection point and the geodesic $[xp]$ is the gradient flow of $\frac{1}{2}\dist_{e}^2$.
	
	Otherwise, let $p'=p$ with $|pp'|=\Delta r$. By repeating the process above successively, we get a finitely-broken geodesic from $p$ to some point $q\in B_{50d_{k+1}}(b_{k+1})$, whose reverse realizes the geodesic flows from $q$ to $p$ by endpoints $\{a_j,b_j\}_{j=1}^k$.
	
	Because for each $p'$ above, $|p'e(p')|\ge \frac{1-\delta}{2}d_k-\frac{\delta}{100}d_k$, and the total length of the broken geodesic is bounded by $\frac{1}{\sin\sigma} \delta d_k$, this completes the proof of Lemma \ref{lem-difflevel-push-0}. 
\end{proof}

Now we are ready to prove Theorem \ref{thm-gradientpush}.
\vspace{2mm}
\begin{proof}[Proof of Theorem \ref{thm-gradientpush}]
	~
	
	Let us assume that the $n$-frame $\{[a_jb_j]\}_{j=1}^n$ admit a $(\frac{\delta^2}{100},l)$-leveling, $1\le k_1<\cdots<k_l=n$. Let $k_0=0$.
	
	By Lemma \ref{lem-pushingforward-0}, $b_{n+1}$ can be pushed onto $B_{\frac{\delta}{2}}(b_{n+1})$ in time $\le 16\delta\frac{4n^2}{4n-1}$.
	
	In each $i$-th level, $B_{\frac{\delta}{4}d_{k_i}}(b_{n+1})$ can be pushed by $\frac{1}{2}\dist_{b_{n+1}}^2$ onto $B_{100d_{k_{i-1}+1}}(b_{n+1})$ in time $\le \ln (\frac{400}{\delta}(\frac{100}{\delta^2})^{k_i-(k_{i-1}+1)})$.
	
	From $i$-th level to $(i-1)$-th level, note that for any $100d_{k_{i-1}+1}\le r\le \frac{\delta}{4}d_{k_{i-1}}$, $B_{\frac{r}{2}}(b_{k_{i-1}+1})\subset B_r(b_{n+1})\subset B_{2r}(b_{k_{i-1}+1})$. By Lemma \ref{lem-difflevel-push-0},  $B_{100d_{k_{i-1}+1}}(b_{n+1})$ can be pushed onto  $B_{\frac{\delta}{4}d_{k_{i-1}}}(b_{n+1})$ in time $\le C(n)\delta$.
	
	Since it finishes after $2l$-steps, the proof completes.
\end{proof}

\subsection{Tracing back process} For completeness we give a proof for the co-Lipschitzness of $F_k:X\to \mathbb R^k$, which has been used in proving Lemma \ref{lem-angle-est-0}. Lemma \ref{lem-pushingforward-0} also follows similarly. The idea of proof is just the same as that of \cite[Theorem 5.4]{BGP92}.

\begin{theorem}[{\cite[Theorem 5.4]{BGP92}}]\label{thm-co-Lip-part-dist-coord}
	There is $\sigma(k)>0$ such that the following holds for $0\le \sigma\le \sigma(k)$.
	
	Let $\{a_j,b_j\}_{j=1}^k$ be a $(k,\sigma)$-strainer at $x_0$ with radius $$r_k=\min\{|a_jx_0|,|b_jx_0|\}_{j=1}^k\le \max\{|a_jb_j|\}_{j=1}^k\le 1.$$ Let $F_k:X\to \mathbb R^k$, $F_k(x)=(|a_1x|,\cdots,|a_kx|)$, be the map associated to $\{a_j,b_j\}_{j=1}^k$ that forms a partial distance coordinates around $x_0$.
	
	Let $p=p_0$ be a point in $B_{\frac{\sigma}{20} d_k}(x_0)$ such that \begin{equation}\label{ineq-co-Lip-part-dist-coord}
	||a_jp|-|a_jx_0||\le \frac{1}{4k}|px_0| \qquad (j=1,\dots, k).
	\end{equation} 
	Then there is a (infinitely-)broken geodesic $[p_0p_1^1\cdots p_1^kp_2^1\cdots p_2^kp_3^1\cdots]$, contained in $B_{\frac{\sigma}{10} d_k}(x_0)$ such that
	the endpoint $p_l=p_l^k$ converges to a point $p'$ as $l\to \infty$, which satisfies
	\begin{equation}
	|pp'|\le \frac{4k+1}{3\sqrt{k}}\cdot |F_k(p)F_k(x_0)|,\qquad F_k(p')=F_k(x_0).
	\end{equation}
\end{theorem}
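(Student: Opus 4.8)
The plan is to reproduce the \emph{tracing-back} (iterated coordinate-correction) argument of \cite[Theorem 5.4]{BGP92}: I will build $p'$ as the limit of the broken geodesic $[p_0p_1^1\cdots p_1^kp_2^1\cdots]$ obtained by fixing the $k$ distance coordinates one at a time, round after round, and I will control everything by showing that the coordinate errors shrink geometrically.

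For one round of corrections, start from $p_0=p$ and suppose coordinates $1,\dots,i-1$ of the current point $p_m^{i-1}$ already agree with those of $x_0$. To fix the $i$-th coordinate: if $|a_ip_m^{i-1}|>|a_ix_0|$, move along a minimal geodesic from $p_m^{i-1}$ towards $a_i$; if $|a_ip_m^{i-1}|<|a_ix_0|$, move towards $b_i$ instead. In the first case the first-variation formula gives $\tfrac{d}{dt}\dist_{a_i}=-1$ exactly, so the exact correction is reached after moving distance $\bigl||a_ip_m^{i-1}|-|a_ix_0|\bigr|$; in the second case, since $a_i,b_i$ form a strainer pair, $\measuredangle(x;a_i,b_i)>\pi-\sigma'$ along the geodesic (here and below $\sigma'$ denotes a quantity $\to0$ as $\sigma\to0$, obtained from the strainer angle bounds by Toponogov comparison, cf.\ \cite[Lemma 5.6]{BGP92}), so $\tfrac{d}{dt}\dist_{a_i}\ge\cos\sigma'$ and the correction costs at most a factor $(\cos\sigma')^{-1}$. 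Denote by $p_m^i$ the resulting point with $|a_ip_m^i|=|a_ix_0|$; it exists by the intermediate value theorem along the geodesic, using $\bigl||a_ip|-|a_ix_0|\bigr|\le\frac1{4k}|px_0|$ to stay in range. The point of the construction is that this step barely disturbs the other coordinates: for $j\ne i$, the rate of change of $\dist_{a_j}$ along a geodesic towards $a_i$ (resp.\ $b_i$) is $-\cos\measuredangle(x;a_i,a_j)$ (resp.\ $-\cos\measuredangle(x;b_i,a_j)$), which by the $(k,\sigma)$-strainer condition at $x_0$, propagated along the geodesic by Toponogov comparison, has absolute value $\le\sin\sigma'$ provided the whole broken geodesic stays inside $B_{\frac\sigma{10}d_k}(x_0)$.

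Running $i=1,\dots,k$ produces $p_{m+1}:=p_m^k$, and I iterate starting from $p_{m+1}$. Writing $\Sigma_m=\sum_{j=1}^k\bigl||a_jp_m|-|a_jx_0|\bigr|$, the above shows that the total length travelled during round $m$ is $\le(\cos\sigma')^{-1}\Sigma_m$ and that $\Sigma_{m+1}\le c(k)\sin\sigma'\cdot\Sigma_m$. Choosing $\sigma(k)$ so small that $q:=c(k)\sin\sigma'<\tfrac1{4k}$, the errors decay geometrically, so the endpoints $\{p_m\}$ form a Cauchy sequence converging to a point $p'$; since $\Sigma_m\to0$ we get $F_k(p')=F_k(x_0)$. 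Summing, $|pp'|\le(\cos\sigma')^{-1}\sum_{m\ge1}\Sigma_m\le\frac{(\cos\sigma')^{-1}}{1-q}\,\Sigma_1$, and since $\Sigma_1=\|F_k(p)-F_k(x_0)\|_{\ell^1}\le\sqrt k\,|F_k(p)F_k(x_0)|$, a routine estimate of the constants (using $q<\tfrac1{4k}$ and, if needed, shrinking $\sigma(k)$ further) gives $|pp'|\le\frac{4k+1}{3\sqrt k}|F_k(p)F_k(x_0)|$. The a priori containment needed above is recovered by a bootstrap: since each $\bigl||a_jp|-|a_jx_0|\bigr|\le\frac1{4k}|px_0|$ and $p\in B_{\frac\sigma{20}d_k}(x_0)$, one has $|F_k(p)F_k(x_0)|\le\frac1{4\sqrt k}|px_0|$, hence $|pp'|\le\frac{4k+1}{12k}|px_0|\le\frac5{12}\cdot\frac\sigma{20}d_k$; the same bound applied at every partial endpoint shows the broken geodesic never leaves $B_{\frac\sigma{10}d_k}(x_0)$, which retroactively justifies the rate estimates used throughout.

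The main obstacle is exactly this entanglement between the quantitative estimates and their domain of validity: the strainer angle bounds are assumed only at $x_0$, yet the rates $|\tfrac{d}{dt}\dist_{a_j}|\le\sin\sigma'$ (for $j\ne i$) and $\tfrac{d}{dt}\dist_{a_i}\ge\cos\sigma'$ must hold along \emph{all} the correction geodesics, which drift away from $x_0$. One therefore has to propagate near-orthogonality and near-antipodality by Toponogov comparison into a whole ball $B_{\frac\sigma{10}d_k}(x_0)$ and simultaneously prove the length bound that confines the broken geodesic to that ball, which is the bootstrap above. A secondary point is that the strainer inequalities are one-sided ($\measuredangle\ge\pi/2-\sigma$), so correcting a coordinate in either direction genuinely requires \emph{both} members $a_j,b_j$ of the pair. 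Everything else — the first-variation formula on Alexandrov spaces, Toponogov angle comparison, and summing the geometric series — is standard.
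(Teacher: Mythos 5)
Your proposal follows essentially the same iterated-correction (``tracing-back'') argument as the paper: correct one distance coordinate per step by moving toward $a_i$ or $b_i$, observe that each such step perturbs the other coordinates by $O(\sigma')$ times the step length, conclude geometric decay of the $\ell^1$ coordinate error, sum the resulting geometric series for the length estimate, and close the argument with the same bootstrap/induction confining the broken geodesic to $B_{\frac{\sigma}{10}d_k}(x_0)$. The only cosmetic differences are that you fix each coordinate exactly at each step (via the intermediate value theorem along $[a_i x]$ or $[x b_i]$) whereas the paper applies the fixed correction $f_i(x_0)-f_i(p_{l-1})$ computed from the previous round's endpoint --- the two schemes agree up to second-order terms and give the same decay rate --- and there is a harmless index shift at the end: your $\Sigma_1$ should read $\Sigma_0=\|F_k(p)-F_k(x_0)\|_{\ell^1}$, as the geometric series starts from round $0$.
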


Let $\delta>0$ be a small number other than $\sigma$. Let $p$ be a point in  $B_{\delta d_k}(x_0)$. Let us first define its \emph{$l$-th round $k$-tracing back point} $p_l=p_l^k$ of $p$ towards $x_0$'s $F_k$-fiber inductively as follows. Here tracing back means moving along gradient curves of distance to $a_j$ or $b_j$ backwards.

Let $p_0=p$ and let us assume that $p_{l-1}$ is well-defined. For the first coordinate function $f_1=\dist_{a_1}$, let $p_{l}^1$ be a point lies in the broken geodesic $[a_1p_{l-1}b_1]$ such that $$f_1(p_{l}^1)-f_1(p_{l-1})=f_1(x_0)-f_1(p_{l-1}).$$ Let $p_l^2$ be a point lies in the broken geodesic $[a_2p_{l-1}^1b_2]$ such that
$$f_2(p_{l}^2)-f_2(p_{l-1}^1)=f_2(x_0)-f_2(p_{l-1}).$$
Repeating $k$-times, we have $p_{l-1}^k$ in $[a_kp_{l-1}^{k-1}b_k]$ such that
$$f_k(p_{l}^k)-f_k(p_{l-1}^{k-1})=f_k(x_0)-f_k(p_{l-1}).$$
Then $p_l$ is defined to be $p_l^k$.

Since $p\in B_{\delta d_k}(x_0)$, by an elementary angle estimate \cite[Lemma 5.6]{BGP92}, the following holds for $0<\delta<\frac{\sigma}{10}$:
for any $1\le i\le k$, $1\le j\le i-1$,
$$\begin{aligned}
|\measuredangle (p;a_{j},a_{i})-\frac{\pi}{2}|\le 4\sigma,\quad 
|\measuredangle (p;b_{j},a_{i})-\frac{\pi}{2}|\le 4\sigma,\\
|\measuredangle (p;a_{j},b_{i})-\frac{\pi}{2}|\le 4\sigma,\quad 
|\measuredangle (p;b_{j},b_{i})-\frac{\pi}{2}|\le 4\sigma.
\end{aligned}$$
Clearly, it follows that the relations below hold.
\begin{lemma} For some positive function $\epsilon=\epsilon(\sigma)\to 0$ as $\sigma\to 0$,
	\begin{enumerate}
		\item\label{tracingback-est1} $|p_{l}^ip_{l}^{i-1}|\le (1+\epsilon)\cdot |f_i(x_0)-f_i(p_l)|$;
		\item\label{tracingback-est2} $|f_j(p_l^i)-f_j(p_l^{i-1})|\le \epsilon \cdot  |f_i(x_0)-f_i(p_{l-1})|$ for any $j\neq i$.
	\end{enumerate}
	
\end{lemma}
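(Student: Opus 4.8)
The plan is to read off both inequalities from the first variation formula for distance functions — equivalently, the Toponogov hinge comparison in the model plane of curvature $-1$ — together with the displayed near-degenerate angle estimates, just as in the proof of \cite[Theorem 5.4]{BGP92}. Fix a round $l$ and a step $1\le i\le k$. By construction $p_l^i$ lies on the broken geodesic $[a_ip_l^{i-1}b_i]$, so $[p_l^{i-1}p_l^i]$ is a subarc of a shortest path from $p_l^{i-1}$ to one of $a_i,b_i$; call that endpoint $e_i\in\{a_i,b_i\}$, and set $\tau_i:=f_i(x_0)-f_i(p_{l-1})$, so that by the defining relation of the tracing-back point $f_i(p_l^i)-f_i(p_l^{i-1})=\tau_i$. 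Every intermediate point of the broken geodesic lies in $B_{\frac{\sigma}{10}d_k}(x_0)$ — this is part of the assertion of Theorem \ref{thm-co-Lip-part-dist-coord}, proved there by exactly the length bound that the present estimates produce — so the strainer conditions of \cite[Lemma 5.6]{BGP92} hold at $q=p_l^{i-1}$: $|\measuredangle(p_l^{i-1};a_i,b_i)-\pi|\le 4\sigma$, and $|\measuredangle(p_l^{i-1};e_i,a_j)-\tfrac\pi2|\le 4\sigma$ for every $j\ne i$.

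For (1): if $e_i=a_i$ then $\dist_{a_i}$ decreases at unit rate along $[p_l^{i-1}a_i]$, so $|p_l^{i-1}p_l^i|=|\tau_i|$; if $e_i=b_i$, the hinge comparison at $p_l^{i-1}$ between the directions to $a_i$ and to $p_l^i$ (angle $\ge\pi-4\sigma$) gives $f_i(p_l^i)-f_i(p_l^{i-1})\ge|p_l^{i-1}p_l^i|\cos 4\sigma-C|p_l^{i-1}p_l^i|^2$, where $C$ is bounded because $|a_ip_l^{i-1}|$ is comparable to $d_k$; since the segment length is a small multiple of $\sigma d_k$, this forces $|p_l^{i-1}p_l^i|\le(1+\epsilon)|\tau_i|$ for a function $\epsilon=\epsilon(\sigma)\to0$. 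In both cases $|p_l^ip_l^{i-1}|\le(1+\epsilon)\,|f_i(x_0)-f_i(p_{l-1})|$, which is (1). For (2): along the same segment, which points towards $e_i$, the first variation formula gives, for each $j\ne i$, $|f_j(p_l^i)-f_j(p_l^{i-1})|\le|p_l^{i-1}p_l^i|\cdot|\cos\measuredangle(p_l^{i-1};e_i,a_j)|+C'|p_l^{i-1}p_l^i|^2$, and $|\cos\measuredangle(p_l^{i-1};e_i,a_j)|\le\sin 4\sigma$ by the near-orthogonality bound; since $|p_l^{i-1}p_l^i|$ is a small multiple of $\sigma d_k$ while the distances to the $a_j$ are comparable to $d_k$, the quadratic term is negligible and $|f_j(p_l^i)-f_j(p_l^{i-1})|\le\epsilon\,|p_l^{i-1}p_l^i|$. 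Combining with (1) and enlarging $\epsilon(\sigma)$ slightly gives $|f_j(p_l^i)-f_j(p_l^{i-1})|\le\epsilon\,|f_i(x_0)-f_i(p_{l-1})|$, which is (2).

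No step here is deep — the ``clearly'' in the text is justified — and the only point requiring care is the uniformity of the higher-order curvature errors. This is automatic in the present regime: all points remain in $B_{\frac{\sigma}{10}d_k}(x_0)$, so each $|a_jp_l^{i-1}|$, $|b_jp_l^{i-1}|$ stays within a bounded ratio of $d_k$ (and below $1$), while each segment $|p_l^{i-1}p_l^i|$ is at most a fixed small multiple of $\sigma d_k$; hence the hyperbolic hinge laws deviate from their first-order Euclidean approximations by $O(\sigma^2 d_k)$, which is swallowed by $\epsilon(\sigma)$. The one genuine subtlety is the mild circularity noted above — the angle bounds are invoked at the intermediate points $p_l^{i-1}$, which one must first know to lie in $B_{\frac{\sigma}{10}d_k}(x_0)$ — and it is dissolved in the proof of Theorem \ref{thm-co-Lip-part-dist-coord} by running the tracing-back construction and the length bound simultaneously, inductively on $l$: the bound obtained from rounds $1,\dots,l$ is precisely what confines round $l+1$ to the ball.
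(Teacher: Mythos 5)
Your proof is correct and fills in exactly the argument the paper compresses into ``Clearly, it follows'': the near-$\pi$ straining angle gives the almost-unit rate of change of $f_i$ along $[a_i p_l^{i-1}]$ or $[p_l^{i-1}b_i]$ (yielding (1) via hinge comparison), while the near-$\pi/2$ angles give the slow drift of $f_j$ for $j\ne i$ (yielding (2) after combining with (1)), with the hyperbolic second-order errors uniformly controlled because all points stay in a ball of radius a small multiple of $\sigma d_k$ and the strainer arms have length comparable to $d_k$. You also correctly note the typo $f_i(p_l)\mapsto f_i(p_{l-1})$ in item (1) and the mild circularity (angle bounds presuppose $p_l^{i}\in B_{\sigma d_k/10}(x_0)$, which is secured inductively together with the $B_l$ estimate in the proof of Theorem~\ref{thm-co-Lip-part-dist-coord}), matching the paper's actual use of the lemma.
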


Now we are ready to prove Theorem \ref{thm-co-Lip-part-dist-coord}.

\vspace{2mm}
\begin{proof}[Proof of Theorem \ref{thm-co-Lip-part-dist-coord}]
	~
	
	Let $\delta=\frac{\sigma}{20}$.
	Let $A_l=\sum_{j=1}^k|f_j(p_l)-f_j(x_0)|$ and $B_l=|p_{l+1}p_l|$. 
	As long as the $l$-th tracing back point $p_l$ lies in $B_{2\delta d_k}(x_0)$, the estimates (\ref{tracingback-est1})-(\ref{tracingback-est2}) hold. By triangle inequality, we derive
	$A_{l+1}\le \epsilon (k-1)A_l$ and $B_l\le (1+\epsilon)A_l$.
	As $\delta$ sufficient small, $\epsilon\le \frac{1}{4k}$ so that
	$A_{l+1}\le \frac{1}{4}A_l$ and $B_l\le \frac{4k+1}{4k}A_l$, and thus $A_l\le \frac{1}{4^l}A_0$ and $B_l\le \frac{4k+1}{4k}\cdot\frac{1}{4^l}A_0$ are Cauchy sequences. 
	
	Now let us check that, by induction on $l$,  each $p_l$ satisfies $|p_lx_0|\le \frac{3}{2}\delta d_k$ so that $p_l\in B_{2\delta d_k}(x_0)$.  By the assumption (\ref{ineq-co-Lip-part-dist-coord}), 	
	$A_0\le \frac{1}{4}|px_0|$, and thus $A_l\le \frac{1}{4^{l+1}} |px_0|$, $B_l\le \frac{4k+1}{4k}\cdot \frac{1}{4^{l+1}} |px_0|$. Then 
	$$\sum_{t=0}^{l}B_t\le \frac{4k+1}{4k}\cdot \frac{1}{3} |px_0| \le \frac{1}{2}|px_0|,$$
	which justifies $|p_lx_0|\le \frac{3}{2}\delta d_k$. 
	
	Let $p'$ be the limit point of $p_l$, then 
	$$|pp'|\le \sum_{l=0}^\infty B_l \le \frac{4k+1}{3k}A_0\le \frac{4k+1}{3\sqrt{k}}|F_k(x_0)F_k(p)|.$$
	The conclusion of Theorem \ref{thm-co-Lip-part-dist-coord} now follows.
\end{proof}

\subsection{Proof of Lemma \ref{lem-pushingforward-0}}
In this subsection we prove that a gradient push can be started from $b_{n+1}$ to any point in a very small ball in a definite short time.

Note that if we set $k=n$, $x_0=q$ and $p=b_{n+1}$ in Theorem \ref{thm-co-Lip-part-dist-coord}, then $b_{n+1}$ can be moved to $q$ along gradient curves of $\frac{1}{2}\dist_{a_j}^2$, $\frac{1}{2}\dist_{b_j}^2$ backwards. So we need to reverse the tracing back process defined in the proof of Theorem \ref{thm-co-Lip-part-dist-coord}.

Let $m_n$ be the middle point of $[a_nb_n]$ of a $\delta^2$-maximal $n$-frame $\{[a_jb_j]\}_{j=1}^n$. Let $b_{n+1}$ be a point $\frac{\delta}{100}d_n$-close to $m_n$.
For any $q\in B_{\frac{\delta}{2}d_n}(b_{n+1})$, the \emph{$l$-th round pushing forward point} $O_l$ from $O_0=b_{n+1}$ towards $q$ is defined inductively as follows.

Assume that $O_l$ is well-defined. By tracing back $q_{l+1}^0=q$ to $O_l$ by a single round, we have the $n$-tracing points and tacking broken geodesic $[q_{l+1}^0q_{l+1}^1\cdots q_{l+1}^n]$, where $q_{l+1}^i\in [a_i q_{l+1}^{i-1}]$ or $[b_iq_{l+1}^{i-1}]$ ($i=1,\cdots,n$). Let $\Phi_{l+1}$ be the successive gradient flow defined by 
$$\Phi_{l+1}=\Phi_{1,t_{l+1,1}}\circ\Phi_{2,t_{l+1,2}}\circ\cdots\circ\Phi_{n,t_{l+1,n}}:X\to X,$$
where $\Phi_{i,t_{l+1,i}}$ is the gradient flow of $\frac{1}{2}\dist_{a_i}^2$ or $\frac{1}{2}\dist_{b_i}^2$ which maps $q_{l+1}^i$ to $q_{l+1}^{i-1}$.
We define $O_{l+1}=\Phi_{l+1}(O_l)$.

By (\ref{tracingback-est1}), it is easy to see that the total time satisfies
\begin{equation}\label{round-time}
T_{l+1}=\sum_{i=1}^n{t_{l+1,i}}\le \frac{4(1+\epsilon)}{d_n}\sum_{i=1}^n|f_i(q)-f_i(O_l)|.\tag{4.8}
\end{equation} 

\vspace{2mm}
\begin{proof}[Proof of Lemma \ref{lem-pushingforward-0}]
	~
	
	It suffices to show that the $l$-th round pushing forward point $O_l$ towards $q$ converges to $q$, and the total time admits the bound in Lemma \ref{lem-pushingforward-0}.
	
	Let $A_l=|qO_l|$ and $B_l=\sum_{i=1}^n |f_i(q)-f_i(O_l)|$. Then (\ref{round-time}) can be rewritten as $T_{l+1}\le \frac{4(1+\epsilon)}{d_n}B_l$. 
	
	We first assume that $O_l$ always lies in the cube $$I_{\delta d_n}(m_n)=\{x\in X: |f_i(x)-f_i(m_n)|\le \delta d_n,\; \forall\; 1\le i\le n\}.$$
	Since the Lipschitz constant of distant coordinate function $F_n$ on $I_{\delta d_n}(m_n)$ is almost $1$, 
	\begin{equation}\label{pushingforward-dist-est1}
	|q_{l+1}^nO_l|\le 2\sum_{i=1}^n |f_i(q_{l+1}^n)-f_i(O_l)|,\tag{4.9}
	\end{equation}  
	where by (\ref{tracingback-est2})
	\begin{equation}\label{pushingforward-dist-est2}
	\sum_{i=1}^n |f_i(q_{i+1}^n)-f_i(O_l)|\le \epsilon (n-1) \sum_{i=1}^n |f_i(q)-f_i(O_l)|\le \epsilon (n-1)n |qO_l|.\tag{4.10}
	\end{equation} 
	By $|f_i(q)-f_i(O_{l+1})|\le |qO_{l+1}|$,
	\begin{align*}
	B_{l+1}=\sum_{i=1}^n|f_i(q)-f_i(O_{l+1})| \le n A_{l+1}.  \end{align*}
	
	The concavity of $\frac{1}{2}\dist_x^2$ with $\dist_x<2$ is bounded by $2 \frac{\cosh2}{\sinh2}$. By Theorem \ref{thm-gradient-lip}, (\ref{round-time}) and (\ref{pushingforward-dist-est1})-(\ref{pushingforward-dist-est2}), 
	\begin{align*}
	A_{l+1}=d(q,O_{l+1}) & \le e^{2 \frac{\cosh2}{\sinh2} \cdot T_{l+1}}|q_{l+1}^nO_l|\\
	& \le  e^{8 \frac{\cosh2}{\sinh2} \cdot (1+\epsilon) B_l/d_n} \cdot 2\epsilon (n-1)n |qO_l|\\
	& = e^{8 \frac{\cosh2}{\sinh2} \cdot (1+\epsilon)n A_l/d_n} \cdot 2\epsilon (n-1)n A_l.  
	\end{align*}

	Let us take $\delta(n)>0$ such that for $0<\delta\le\delta(n)$, $A_0/d_n\le \frac{\delta}{2}\le \frac{1}{(1+\epsilon)n}$, and  $\epsilon\le \frac{1}{8n^3 e^{8 \frac{\cosh2}{\sinh2} }}$. Then $(1+\epsilon)n A_0/d_n\le 1$. Moreover, $A_1\le \frac{1}{4n} A_0\le A_0$. By induction, for any $l$, $A_l\le \frac{1}{(4n)^l}A_0$, $B_l\le \frac{n}{(4n)^l} A_0$, and $O_l$ lies in $I_{\delta d_n}(m_n)$. 
	
	Therefore, all estimates above are valid for $0<\delta<\delta(n)$, and $A_l\to 0$ as $l\to \infty$, i.e., $O_l\to q$. Moreover, the total time
	\begin{align*}
	T&=\sum_{i=1}^\infty T_{l} \le 4(1+\epsilon)n\sum_{i=1}^\infty \frac{A_l}{d_n} \\
	&\le 2(1+\epsilon)\delta\frac{4n^2}{4n-1}.
	\end{align*}
\end{proof}

\bibliographystyle{plain}
\bibliography{almost_submetry_references}

\begin{thebibliography}{10}

\bibitem{BG00}
V.~N. Berestovskii and Luis Guijarro.
\newblock A metric characterization of riemannian submersions.
\newblock {\em Ann. Global Anal. Geom.}, 18(6):577--588, 2000.

\bibitem{Bo55}
K.~Borsuk.
\newblock On some metrizations of the hyperspace of compact sets.
\newblock {\em Fund. Math.}, 41:168--201, 1955.

\bibitem{BGT2012}
E.~Breuillard, B.~Green, and T.~Tao.
\newblock The structure of approximate groups.
\newblock {\em Publications Math\'ematiques de l' IH\'ES}, 116:115--221, 2012.

\bibitem{BGP92}
Y.~Burago, M.~Gromov, and G.~Perelman.
\newblock A.d. alexandrov spaces with curvature bounded below.
\newblock {\em Uspekhi Mat. Nauk}, 47(2(284)):3--51, 1992.

\bibitem{BK81}
P.~Buser and H.~Karcher.
\newblock Gromov's almost flat manifolds, 1981.

\bibitem{CC96}
J.~Cheeger and T.~H. Colding.
\newblock Lower bounds on ricci curvature and the almost rigidity of warped
  products.
\newblock {\em Ann. of Math.}, 144(1):189--237, 1996.

\bibitem{Fe78}
S.~Ferry.
\newblock Strongly regular mappings with compact anr fibers are hurewicz
  fiberings.
\newblock {\em Pacific J. Math.}, 75(2):373--382, 1978.

\bibitem{Fu87}
K.~Fukaya.
\newblock Collapsing of riemannian manifolds to ones of lower dimensions.
\newblock {\em J. Differential Geom.}, 25:139--156, 1987.

\bibitem{FY92}
K.~Fukaya and T.~Yamaguchi.
\newblock The fundamental groups of almost non-negatively curved manifolds.
\newblock {\em Ann. of Math.}, 136:253--333, 1992.

\bibitem{Gr78b}
M.~Gromov.
\newblock Almost flat manifolds.
\newblock {\em J. Differential Geom.}, 13:231--241, 1978.

\bibitem{Gr78a}
M.~Gromov.
\newblock Manifolds of negative curvature.
\newblock {\em J. Differential Geom.}, 13:223--230, 1978.

\bibitem{Gro2007}
M.~Gromov.
\newblock {\em Metric Structures for Riemannian and Non-Riemannian Spaces}.
\newblock Modern Birkh\"auser Classics, Birkh\"auser, Boston, Based on the 1981
  French original,With appendices by M. Katz, P. Pansu and S. Semmes,
  Translated from the French by Sean Michael Bates., 2007.

\bibitem{He76}
E.~Heintze.
\newblock {\em Manningfaltigkeiten negativer Kriimmung}.
\newblock PhD thesis, Universit\'it Bonn Habilitationsschrift, 1976.

\bibitem{Ka07}
V.~Kapovitch.
\newblock Perelman's stability theorem.
\newblock In {\em Surveys of Differential Geometry, Metric and Comparison
  Geometry}, XI, pages 103--136. Int. Press, Somerville, 2007.

\bibitem{KPT10}
V.~Kapovitch, A.~Petrunin, and W.~Tuschmann.
\newblock Nilpotency, almost nonnegative curvature, and the gradient flow on
  alexandrov spaces.
\newblock {\em Ann. of Math.}, 171:343--373, 2010.

\bibitem{KW11}
V.~Kapovitch and B.~Wilking.
\newblock Structure of fundamental groups of manifolds with ricci curvature
  bounded below.
\newblock {\em Preprint, arXiv:1105.5955}, 2011.

\bibitem{LS12}
U.~Lang and V.~Schroeder.
\newblock On toponogov's comparison theorem for alexandrov spaces.
\newblock {\em arXiv:1207.3668 [math.MG]}, 2012.

\bibitem{LiRong12}
N.~Li and X.C. Rong.
\newblock Relatively maximum volume rigidity in alexandrov geometry.
\newblock {\em Pacific J. Math.}, 259:387--420, 2012.

\bibitem{CRX16}
X.C. Rong; S.C.~Xu L.N.~Chen.
\newblock Quantitative volume space form rigidity under lower ricci curvature
  bound i.
\newblock {\em arXiv:1604.06986 [math.DG], to appear in J. Diff. Geom.}, 2016.

\bibitem{Pr91}
G.~Perelman.
\newblock Alexandrov spaces with curvatures bounded from below ii.
\newblock {\em Preprint}, 1991.

\bibitem{Pr93}
G.~Perelman.
\newblock Elements of {M}orse theory on {A}leksandrov spaces.
\newblock {\em St. Petersburg Math. J.}, 5:205--213, 1993.

\bibitem{Pr97}
G.~Perelman.
\newblock Collapsing with no proper extremal subsets.
\newblock In K.~Grove and P.~Petersen, editors, {\em Comparison Geometry},
  volume~30 of {\em MSRI Books}, pages 149--155. Cambridge Univ. Press,
  Cambridge, 1997.

\bibitem{Petr07}
A.~Petrunin.
\newblock Semiconcave functions in alexandrov's geometry.
\newblock In J.~Cheeger and K.~Grove, editors, {\em Metric and Comparison
  Geometry}, volume~XI of {\em Surveys in Differential Geometry}, pages
  137--202. Int. Press, Somerville, 2007.

\bibitem{Pl96}
C.~Plaut.
\newblock Spaces of wald-berestovskii curvature bounded below.
\newblock {\em J. Geom. Anal.}, 6:113--134, 1996.

\bibitem{RX12}
X.~Rong and S.~Xu.
\newblock Stability of $e^\epsilon$-lipschitz and co-lipschitz maps in
  gromov-hausdorff topology.
\newblock {\em Advances in Mathematics}, 231:774--797, 1 October 2012.

\bibitem{Ruh82}
E.~A. Ruh.
\newblock Almost flat manifolds.
\newblock {\em J. Differential Geom.}, 17:1--14, 1986.

\bibitem{Wa18}
Y.S. Wang.
\newblock A {S}chur-{T}oponogov theorem in riemannian geometry \& a new proof
  of {T}oponogov's theorem in alexandrov geometry.
\newblock {\em arXiv:1809.09818 [math.DG]}, 2018.

\bibitem{SSW13}
H.W. Sun; Y.S.~Wang X.L.~Su.
\newblock A new proof of almost isometry theorem in alexandrov geometry with
  curvature bounded below.
\newblock {\em Asian J. Math.}, 17(4):715--728, 2013.

\bibitem{Xu12}
S.~Xu.
\newblock Homotopy lifting property of an $e^\epsilon$-lipschitz and
  co-lipschitz map.
\newblock {\em preprint}, 2013.

\bibitem{JX18}
Z.H. Jiang;~S.C. Xu.
\newblock Stability of pure nilpotent structures on collapsed manifolds.
\newblock {\em arXiv:1805.06139 [math.DG], to appear in Int. Math. Res. Not.},
  2018.

\bibitem{Ya91}
T.~Yamaguchi.
\newblock Collapsing and pinching under a lower curvature bound.
\newblock {\em Ann. of Math.}, 133:317--357, 1991.

\bibitem{Ya96}
T.~Yamaguchi.
\newblock A convergence theorem in the geometry of {A}lexandrov spaces, 1996.

\end{thebibliography}

\end{document}